\providecommand{\keywords}[1]{\textbf{{Keywords }} #1}
\providecommand{\subjclass}[1]{\textbf{{Mathematics subject classification }} #1}
\newtheorem{mydef}{Definition}
\newtheorem{mytheorem}{Theorem}
\newtheorem*{mytheorem*}{Theorem}
\newtheorem{corollary}{Corollary}
\newtheorem*{corollary*}{Corollary}
\newtheorem{lemma}{Lemma}
\newcommand{\vect}[1]{\boldsymbol{#1}}
\numberwithin{equation}{section}
\algrenewcommand\algorithmicindent{1.0em}%
\title{Sparse Harmonic Transforms:  A New Class of Sublinear-time Algorithms for Learning Functions of Many Variables}
\author{Bosu Choi\thanks{Department of Mathematics, Michigan State University \texttt{(choibosu@msu.edu)}}
\and
Mark A. Iwen\thanks{Department of Mathematics, and Department of Computational Mathematics, Science, and Engineering(CMSE), Michigan State University \texttt{(iwenmark@msu.edu)}}%
\and
Felix Krahmer\thanks{Department of Mathematics, Technische Universit{\"a}t M{\"u}nchen, Germany \texttt{(felix.krahmer@tum.de)}}
}
\date{}
\begin{document}

\maketitle
\vspace{-12pt}
\begin{abstract}
In this paper we develop fast and memory efficient numerical methods for learning functions of many variables that admit sparse representations in terms of general bounded orthonormal tensor product bases.  Such functions appear in many applications including, e.g., various Uncertainty Quantification (UQ) problems involving the solution of parametric PDE that are approximately sparse in Chebyshev or Legendre product bases \cite{chkifa2016polynomial, rauhut2017compressive}. We expect that our results provide a starting point for a new line of research on sublinear-time solution techniques for UQ applications of the type above which will eventually be able to scale to significantly higher-dimensional problems than what are currently computationally feasible.

More concretely, let $\mathcal{B}$ be a finite Bounded Orthonormal Product Basis (BOPB) of cardinality $|\mathcal{B}| = N$.  Herein we will develop methods that rapidly approximate any function $f$ that is sparse in the BOPB, that is, $f: \mathcal{D} \subset \mathbb{R}^D \rightarrow \mathbb{C}$ of the form
\[f(\vect{x}) = \sum_{b \in \mathcal{S}} c_b \cdot b(\vect{x})\]
with $\mathcal{S} \subset \mathcal{B}$ of cardinality $|\mathcal{S}| = s \ll N$.

Our method adapts the CoSaMP algorithm \cite{needell2009cosamp} to use additional function samples from $f$ along a randomly constructed grid $\mathcal{G} \subset \mathbb{R}^D$ with universal approximation properties in order to rapidly identify the multi-indices of the most dominant basis functions in $\mathcal{S}$ component by component during each CoSaMP iteration.
It has a runtime of just $(s \log N)^{\mathcal{O}(1)}$, uses only $(s \log N)^{\mathcal{O}(1)}$ function evaluations on the fixed and nonadaptive grid $\mathcal{G}$, and requires not more than $(s \log N)^{\mathcal{O}(1)}$ bits of memory.  We emphasize that nothing about $\mathcal{S}$ or any of the coefficients $c_b \in \mathbb{C}$ is assumed in advance other than that $\mathcal{S} \subset \mathcal{B}$ has $|\mathcal{S}| \leq s$.  Both $\mathcal{S}$ and its related coefficients $c_b$ will be learned from the given function evaluations by the developed method.  

For $s\ll N$, the runtime $(s \log N)^{\mathcal{O}(1)}$  will be less than what is required to simply enumerate the elements of the basis $\mathcal{B}$; thus our method is the first approach applicable in a general BOPB framework that falls into the class referred to as {\it sublinear-time}. This and the similarly reduced sample and memory requirements set our algorithm apart from previous works based on standard compressive sensing algorithms such as basis pursuit which typically store and utilize full intermediate basis representations of size $\Omega(N)$ during the solution process.

\end{abstract}

\keywords{High-Dimensional Function Approximation $\cdot$ Sublinear-time Algorithms $\cdot$ Function Learning $\cdot$ Sparse Approximation $\cdot$ Compressive Sensing}

\subjclass{65T40 $\cdot$ 68W25 }

\section{Introduction} \label{Introduction}
\setcounter{equation}{0}


One encounters the problem of multivariate function integration, approximation, interpolation, and learning from a relatively small number of function evaluations in application areas ranging from computational physics to mathematical finance.  A common class of examples in the Uncertainty Quantification (UQ) literature \cite{Smith:2013:UQT:2568154, Xiu:2010:NMS:1893088}, for example, involves the approximation of Quantities of Interest (QoI) that are assumed to be continuous functions of a potentially large number of parameters.  Consequently, uncertainty in the input parameters leads to uncertainty in the QoI outputs which, in turn, necessarily depends on how the QoI behaves as a function of the input parameters.  
In order to understand the uncertainty in the QoI outputs one is therefore forced to approximate the QoI as a function.  
This typically requires multivariate function integration and interpolation, usually via quadrature methods \cite{Dahlquist:2008:NMS:1383510}, sparse grid approaches \cite{bungartz2004sparse}, or (quasi-)Monte Carlo methods \cite{leobacher2014introduction,caflisch1998monte}, depending on the number of parameters (i.e., variables) on which the QoI depends.  In any case, all of these approaches typically must assume that the QoI is a highly smooth function of its input parameters in order to guarantee efficiency and accuracy, though smoothness alone cannot generally save one from the curse of dimensionality \cite{hinrichs2014curse} (i.e., from an exponential sampling and runtime dependence on the number of function variables, $D$).

More recently, sparsity of the quantity of interest in a given Bounded Orthonormal Product Basis (BOPB) has been identified as an appropriate model assumption for UQ problems involving solutions of parametric elliptic partial differential equations \cite{schwab2006karhunen,chkifa2016polynomial,2017arXiv170101671B,rauhut2017compressive, Adcock2017, adcock2017infinite}. This observation allows for a formulation of the QoI approximation problem in the language of compressive sensing (CS), a paradigm introduced in the signal recovery literature in the early 2000's (starting with \cite{donoho2006compressed, Candes2006}, cf.~\cite{foucart2013mathematical} for a comprehensive introduction to the field). Namely, when the function evaluations are performed for random choices of the input parameters, the problem is a special case of the problem of recovering a function that admits a sparse representation in a Bounded Orthogonal System from a small number of function evaluations. This problem in general terms, that is, without assuming a product structure as we encounter it here, has been of interest to the CS community almost from the beginning (see, e.g., \cite{R07, R12,KW14}). 

Building on these general results, a number of more recent works have studied the same questions specifically for the important case of multivariate functions which exhibit sparsity in high-dimensional Chebyshev and Legendre product bases (see. e.g., \cite{rauhut2017compressive,chkifa2016polynomial}).  These methods still store and utilize full intermediate basis representations during the solution process, however.  In order for the problems to still be feasible, they often make additional assumptions on the structure of the sparsity which imply that the degrees of the polynomial basis functions with large coefficients are relatively small.  This has the effect of reducing the overall sampling complexity and size of the basis which, in turn, allows for faster approximation of the QoI function with less required memory.  A simple example is the case where a function of a very large number of variables is assumed to actually only depend on a small subset of them \cite{devore2011approximation,gilbert2008group} (see also \cite{rauhut2017compressive, 2017arXiv170101671B} which achieve a similar effect in the UQ setting when $D \gg s$ via a combination of Petrov-Galerkin approximation and weighted $\ell_1$ minimization techniques).  Methods which assume hyperbolic cross \cite{shen2010sparse,dung2016hyperbolic} or lower set \cite{chkifa2016polynomial} structures on the energetic basis function indexes provide additional examples.  

The connection between UQ and BOPB-sparse function recovery established in these recent works paves the way for us to devise the first sublinear-time compressive sensing methods for
general BOPB frameworks in this paper.  More precisely, we are able to decouple the runtime and memory requirements necessary in order to learn a given BOPB-sparse function from the overall BOP basis size one must initially consider.  In short, we develop extremely fast and memory efficient compressive sensing algorithms for such problems. Besides an improved theoretical performance, also the empirical performance of our method improves over previous approaches. In particular, the enhanced memory efficiency allows us to tackle much larger problem sizes than in previous works.  We expect that the results presented in this paper will trigger follow-up works on sublinear-time solution techniques for UQ applications of the type above which will eventually be able to scale to significantly higher-dimensional problems than what are currently computationally feasible.

Though its focus is on the recovery of functions which exhibit sparsity in an arbitrary BOPB, the method developed herein is a direct descendant of previously existing sublinear-time compressive sensing algorithms developed in the mathematics and computer science communities for data stream processing and sketching applications \cite{gilbert2006sublinear,Gilbert:2007:OSF:1250790.1250824,doi:10.1137/100816705,iwen2014compressed,Gilbert:2017:FSR:3058789.3039872}.  Unlike these previous methods, however, the compressive sensing matrices we are forced to use herein are necessarily solely derived from highly structured combinations of Bounded Orthonormal System (BOS) sampling matrices (see \S\ref{sec:SublinCompSense} for details).  As a result, our recovery algorithm cannot make direct use of any of the group testing and random hashing techniques commonly utilized by such sublinear-time compressive sensing methods.  Instead, we appeal to compressive sensing results concerning the restricted isometry constants of random sampling matrices derived from a BOS in order to develop general energy-based hashing techniques which capitalize on the tensor product basis structure of any given BOP basis $\mathcal{B}$.  These new energy hashing techniques are then used to rapidly identify a given BOPB-sparse function's support set $\mathcal{S} \subset \mathcal{B}$ using the algorithms discussed in Sections \ref{secEntryID} and \ref{secPairing}.

Similarly, the sublinear-time compressive sensing method developed herein can also be viewed as a significantly generalized high dimensional Sparse Fourier Transform (SFT) algorithm \cite{iwen2013improved,indyk2014sparse,kapralov2016sparse,choi2016multi,potts2016sparse,potts2017multivariate,Volkmer2017thesis,morotti2017explicit}.  In particular, the support identification techniques developed for arbitrary BOP bases in Sections \ref{secEntryID} and \ref{secPairing} bear a high-level resemblance to the dimension incremental support identification techniques recently proposed by both Potts and Volkmer et al. \cite{potts2016sparse, kammerer2017high} and Choi et al. \cite{choi2016multi,choi2019multiscale} for the multivariate Fourier basis (see \S\ref{sec:EntryIDandPairingComp} for more details).  Unlike the method proposed herein, however, the aforementioned high dimensional SFT's all use the specific structure of the Fourier basis in fundamental ways which makes their results difficult to directly extend to general BOP bases.  Furthermore, with the notable exception of \cite{iwen2013improved,morotti2017explicit}, none of them provide universal recovery guarantees for all Fourier compressible functions.  
As a result, we need to develop entirely new sublinear-time support identification methods which only depend on general BOP basis structure herein.  For a more detailed discussion of how the SFT results yielded as special cases of the main result herein compare to previous SFT methods for the Fourier basis, we refer the reader to \S\ref{equ:SFTcompareCoro1} below.

\subsection{The Compressive Sensing Problem for BOPB-Sparse Functions}
\label{sec:CompSense}

Toward a more exact problem formulation, let $p \in \mathbb{N}$ be any natural number and $[N]: = \{0, 1, 2, \dots, N-1 \}$ for all $N \in \mathbb{N}$.  The set of functions, $\left\{ T_{k}: \mathcal{D} \rightarrow \mathbb{C}  \right \}_{k \in [N]}$ forms a {\em Bounded Orthonormal System} (BOS) with respect to a probability measure $\sigma$ over $\mathcal{D} \subset \mathbb{R}^p$ with BOS constant $K :=\max_{k \in [N]}\|T_{k}\|_\infty \geq 1$ if $K < \infty$, and
\begin{equation*}
\langle T_{k}, T_{l}  \rangle_{(\mathcal{D},\sigma)} := \int_{\mathcal{D}} T_{k}(\vect{x}) \overline{T_{l}(\vect{x})} d\sigma(\vect{x}) = \delta_{k, l} = 
\begin{cases}
1 & \textrm{if}~k = l\\
0 & \textrm{if}~k \neq l
\end{cases}
\end{equation*}
holds for all $ k, l \in [N]$. Now let $\mathcal{B}_j := \{T_{j,k}:\mathcal{D}_j\rightarrow \mathbb{C}\}_{k\in[M]}$ form a {BOS} with respect to a probability measure $\nu_j$ on $\mathcal{D}_j \subset\mathbb{R}$, with constant $\widetilde{K}_j$ for each $j\in[D]$.  Then, the BOPB functions $\mathcal{B} := \left \{T_{\vect{n}}:  \mathcal{D} \rightarrow \mathbb{C} \right\}_{\vect{n} \in [M]^D }$, defined by 
\begin{equation}
T_{\vect{n}}(\vect{x}):=\prod _{j\in[D]} T_{j;n_j}(x_{j})
\label{def:T_n}
\end{equation}
again form a {BOS} with constant 
\begin{equation*}
K:=\max_{\vect{n} \in [M]^D} ||{T}_{\vect{n}}||_\infty = \prod _{j\in[D]}  \widetilde{K}_j
\end{equation*}
with respect to the probability measure $\vect{\nu}:=\otimes_{j\in[D]} \nu_j$ over $\mathcal{D}:= \times_{j\in[D]} \mathcal{D}_j \subset \mathbb{R}^D$. Throughout this paper, we assume for simplicity that $T_{j;0} \equiv 1$ for all $j$. This assumption is true for the large class of orthonormal polynomials including Trigonometric polynomials, Chebyshev polynomials, Legendre polynomials, Gegenbauer polynomials, Jacobi polynomials, etc.

Herein we consider BOPB-sparse functions $f:  \mathcal{D} \rightarrow \mathbb{C}$ of the form
\begin{equation}
f(\vect{x}):= \sum_{\vect{n}\in \mathcal{S} \subset \mathcal{I} \subseteq [M]^D } c_{\vect{n}} T_{\vect{n}}(\vect{x})
\label{equ:ExactlySparsefunc}
\end{equation}
where  $|\mathcal{S}| = s \ll |\mathcal{I}| \leq |\mathcal{B}| = N = M^D$.  Following \cite{devore2011approximation,rauhut2017compressive,2017arXiv170101671B} we will take $\mathcal{I}$ to be the subset of $[M]^D$ containing at most $d \leq D$ nonzero entries.  

Considering the recovery of $f$ using standard compressive sensing methods \cite{donoho2006compressed,foucart2013mathematical} when $\mathcal{I} = [M]^D$, one can simply independently draw $m'_1$ points, $\mathcal{G}^E := \left\{ \vect{t}_1, \dots, \vect{t}_{m'_1} \right\} \subset \mathcal{D}$, according to $\vect{\nu}$ and then sample $f$ at those points to obtain
\begin{equation}
\vect{y^{\rm E}} = f\left( \mathcal{G}^E \right) := \left( f\left(\vect{t}_1 \right),f\left(\vect{t}_2\right),\dots,f\left(\vect{t}_{m'_1}\right)\right)^T \in \mathbb{C}^{m'_1}.
\label{equ:DefyE}
\end{equation}
Our objective becomes the recovery of $f$ using only the samples $\vect{y^{\rm E}}$.

Let the $m'_1 \times M^D$ {\it random sampling matrix} $\Phi \in \mathbb{C}^{m'_1 \times M^D}$ have entries given by
\begin{equation}
\Phi_{\ell,\vect{n}}=T_{\vect{n}}(\vect{t}_\ell).
\label{def:A}
\end{equation}
We can now form the underdetermined linear system 
$$\vect{y^{\rm E}} = \begin{pmatrix} f \left(\vect{t}_1 \right) \\ f \left(\vect{t}_2 \right) \\ \vdots \\ f \left(\vect{t}_{m'_1}\right) \end{pmatrix}= \begin{pmatrix} T_{\vect{n}_1}(\vect{t}_1) & T_{\vect{n}_2}(\vect{t}_1) & \cdots  & \cdots & T_{\vect{n}_{M^D}}(\vect{t}_1)\\ T_{\vect{n}_1}(\vect{t}_2) & T_{\vect{n}_2}(\vect{t}_2) & \cdots  & \cdots & T_{\vect{n}_{M^D}}(\vect{t}_2) \\
\vdots & \vdots & & \ddots & \vdots
\\T_{\vect{n}_1}(\vect{t}_{m'_1}) & T_{\vect{n}_2}(\vect{t}_{m'_1}) & \cdots & \cdots & T_{\vect{n}_{M^D}}(\vect{t}_{m'_1})\end{pmatrix}\vect{c} = \Phi \vect{c},$$
where $\vect{c} \in \mathbb{C}^{M^D}$ contains the basis coefficients $c_{\vect{n}}$ of $f$, and the index vectors $\vect{n}_{1},\dots,\vect{n}_{M^D} \in [M]^D$ are ordered, e.g., lexicographically.  Note that this linear system is woefully underdetermined when $m'_1 \ll M^D$.  When $\vect{c}$ has only $s \ll M^D$ nonzero entries as it does here, however, the compressive sensing literature tells us that  $\vect{c}$ can still be recovered using significantly fewer than $M^D$ function evaluations as long as the normalized random sampling matrix $\Phi$ has the {\em Restricted Isometry Property} (RIP) of order $2s$ \cite{foucart2013mathematical}.\\

\begin{mydef}[See Definition 6.1 in \cite{foucart2013mathematical}]
The $s^{\rm th}$ restricted isometry constant $\delta_s$ of a matrix $\widetilde{\Phi}\in \mathbb{C}^{m \times N}$ is the smallest $\delta \geq 0$ such that 
\begin{equation*}
(1-\delta)\|\vect{c}\|_2^2\leq \left\|\widetilde{\Phi}\vect{c} \right\|_2^2 \leq (1+\delta)\|\vect{c}\|_2^2
\end{equation*}
holds for all $s$-sparse vectors $\vect{c}\in \mathbb{C}^N$. The matrix $\widetilde{\Phi}$ is said to satisfy the {RIP} of order $s$ if $\delta_s \in (0,1)$.
\end{mydef}

Furthermore, one can show that random sampling matrices have the restricted isometry property with high probability even when $\left| \mathcal{G}^E \right|$ is relatively small.\\

\begin{mytheorem}[See Theorem 12.32 and Remark 12.33 in \cite{foucart2013mathematical}]
\label{thm:BOS_RIP}
Let $A\in \mathbb{C}^{m\times N}$ be the random sampling matrix associated to a {BOS} with  constant $K\geq 1$. If, for $\delta, p \in (0,1)$, 
\begin{equation*}
m\geq a K^2\delta^{-2}s \cdot \max \{\log^2(4s) \log(8N) \log(9m), \log(p^{-1})\},
\end{equation*}
then with probability at least $1-p$, the restricted isometry constant $\delta_s$ of $\widetilde{A}=\frac{1}{\sqrt{m}}A$ satisfies $\delta_s \leq \delta$. The constant $a>0$ is universal.
\end{mytheorem}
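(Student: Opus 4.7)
The plan is to reduce the restricted isometry constant $\delta_s$ to the supremum of an empirical process, bound its expectation by a chaining argument, and then upgrade to a high-probability bound by a deviation inequality.

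First I would reformulate $\delta_s$ as a uniform deviation. Writing $\vect{T}(\vect{t}) := (T_0(\vect{t}), \ldots, T_{N-1}(\vect{t}))$ and $D_{s,1} := \{\vect{c} \in \mathbb{C}^N : \|\vect{c}\|_0 \leq s,\ \|\vect{c}\|_2 = 1\}$, orthonormality of the BOS with respect to $\sigma$ gives $\mathbb{E}\, |\langle \vect{T}(\vect{t}), \vect{c}\rangle|^2 = \|\vect{c}\|_2^2$, so
\[
\delta_s \;=\; \sup_{\vect{c} \in D_{s,1}} \left| \frac{1}{m} \sum_{\ell=1}^m |\langle \vect{T}(\vect{t}_\ell), \vect{c}\rangle|^2 - 1 \right|.
\]
A standard Rademacher symmetrization reduces the bound on $\mathbb{E}\, \delta_s$ to the expected supremum of a decoupled Rademacher chaos of order two indexed by $D_{s,1}$, with entries controlled by the BOS constant through $|T_k(\vect{t})| \leq K$.

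Next I would run a Dudley-type chaining bound on this chaos. The index set $D_{s,1}$ has a two-level structure: it is the union over subsets $S \subset [N]$ of size $s$ of Euclidean unit spheres in $\mathbb{C}^S$. Its covering numbers scale like $\binom{N}{s}$ at small scales (contributing a $\log(N)$ factor) and polynomially in $1/\varepsilon$ of dimension $s$ at intermediate scales (contributing the chaining-depth factors $\log(s)$, with a second $\log(s)$ arising from balancing the two metrics governing the chaos). A chaos version of Dudley's inequality then yields, for $m$ in the stated range,
\[
\mathbb{E}\, \delta_s \;\leq\; C K \sqrt{\frac{s \log^2(s) \log(N) \log(m)}{m}} \;\leq\; \delta/2,
\]
for an absolute constant $C$.

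Finally I would upgrade to a high-probability bound by Talagrand's (Bousquet's) inequality for the supremum of a bounded empirical process, applied to the centered summands $|\langle \vect{T}(\vect{t}_\ell), \vect{c}\rangle|^2 - 1$, which have weak variance at most $K^2$ and are bounded by $K^2 s$ on $D_{s,1}$. This adds a deviation term of order $K\sqrt{s \log(p^{-1})/m} + K^2 s \log(p^{-1})/m$, accounting exactly for the $\log(p^{-1})$ branch of the maximum appearing in the sample complexity. The main obstacle is the chaining step: quadratic empirical processes are harder to chain than linear ones, and obtaining the stated $\log^2(4s)\log(8N)\log(9m)$ dependence (rather than a cruder $\log^4$ factor) requires careful two-scale chaining on $D_{s,1}$ and a delicate interpolation between the subgaussian and subexponential tails of the underlying chaos.
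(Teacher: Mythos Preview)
The paper does not prove this theorem at all: it is quoted verbatim as a known result from Foucart and Rauhut's book (Theorem 12.32 and Remark 12.33), so there is no ``paper's own proof'' to compare against. Your sketch is essentially the Rudelson--Vershynin argument as presented in that reference---symmetrize to a Rademacher chaos, chain over $D_{s,1}$ via a Dudley-type bound to control $\mathbb{E}\,\delta_s$, then apply a Bousquet/Talagrand deviation inequality---so it is correct in outline and matches the cited source rather than anything original to this paper.
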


Note that Theorem~\ref{thm:BOS_RIP} effectively decouples the number of samples that one must acquire/compute in order to recover any BOPB-sparse $f$ from the overall BOP basis size $\left| \mathcal{B} \right| = M^D$.  It guarantees that a random sampling set of size $\left| \mathcal{G}^E \right| = m'_1 = \mathcal{O}(K^2 \cdot s \cdot D \cdot \log^4(KMD))$ suffices.  The main obstacle to reducing the sampling complexity (i.e., $m'_1$) at this point becomes the BOS sampling constant $K$.  To see why, consider, e.g., the cosine BOPB where for all $j\in[D]$ in \eqref{def:T_n} we set $T_{j;n}(x)=\sqrt{2}\cos\left(nx\right)$ for $n\geq 1$ and $T_{j;0}(x)=1$ in \eqref{def:T_n}.  This leads to a BOS with $K={2}^{D/2}$ with respect to uniform probability measure $\vect{\nu}$ over $\mathcal{D}=[0,2\pi]^D$.  Now we can see that we still face the curse of dimensionality since $K^2 = 2^D$ even for this fairly straightforward BOPB.  Nonetheless, it expresses itself in a dramatically reduced fashion:  $2^D$ is still a vast improvement over $M^D$ for even moderately sized $M > 2$.

As previously mentioned, to further reduce the sampling complexity from scaling like $2^{\mathcal{O}(D)}$ previous work has focussed on developing efficient methods for effectively reducing the basis size to a smaller subset of the total basis $\mathcal{B}$ (see, e.g., \cite{chkifa2016polynomial, rauhut2017compressive}).  To see how this might work in the context of our simple cosine BOPB example above, we can note that the BOPB elements in \eqref{def:T_n} can be rewritten as
$$T_{\vect{n}}(\vect{x}) := {2^{\| \vect{n} \|_0/2}} \prod_{j=0}^{D-1}\cos \left(n_j x_j \right) $$
in that case.  It now becomes obvious that limiting the basis functions to those with indexes in $\mathcal{I} := \{\vect{n}\in [M]^D ~|~ \|\vect{n}\|_0 \leq d \leq D\}$ leads to a reduced BOS constant of $K=2^{d/2} \leq 2^{D/2}$ for the resulting reduced basis, as well as to a smaller basis cardinality of size ${D \choose d} M^d = \mathcal{O}\left((\frac{DM}{d})^d \right)$.

In particular, the utility of the assumption that the $s$ non-negligible basis indexes of $f$, $\mathcal{S} \subset [M]^D$, also belong to the reduced index set $\mathcal{I}$ above is supported in some UQ applications where it is known that, e.g., the solutions of some parametric PDE are not only approximately sparse in some BOP bases such as the Chebyshev or Legendre product bases, but also that most of their significant coefficients  correspond to index vectors $\vect{n} \subset \mathbb{N}^D$ with relatively small (weighted) $\ell_p$-norms \cite{chkifa2016polynomial, rauhut2017compressive}.  In certain simplified situations this essentially implies that $\mathcal{S} \subset \mathcal{I}$ as discussed above.  As a result, we will assume throughout this paper that $\mathcal{S}\subset \mathcal{I}$ so that $N=|\mathcal{I}|={D \choose d} M^d \leq M^D$.\footnote{Additionally, we will occasionally assume that our total grid size $|\mathcal{G}|$ below always satisfies $|\mathcal{G}| \leq N^c$ for some absolute constant $c \geq 1$ in order to simplify some of the logarithmic factors appearing in our big-O notation.  This will certainly always be the case for any standardly used (trigonometric) polynomial BOPB (such as Fourier and Chebyshev product bases) whenever $sKDM < N$.}


Even when $s \cdot K^2 \ll N$ so that the number of required samples $m'_1$ is small compared to the reduced basis size $\left| \mathcal{I} \right| = N$, however, all existing standard compressive sensing approaches for recovering $f$ still need to compute and store potentially fully populated intermediate coefficient vectors $\vect{c}' \in \mathbb{C}^{N}$ at some point in the process of recovering $f$.  As a result, all existing approaches are limited in terms of the reduced basis sizes $\mathcal{I}$ they can consider by both their memory needs and runtime complexities.  In this paper we develop new methods that are capable of circumventing these memory and runtime restrictions for a general class of practical BOP bases.\footnote{Here we note that preconditioning and well chosen sampling distributions are crucial for many BOP bases.  For example, the BOS constant for the standard Legendre basis is $K = \sqrt{2M+1}^D$ which implies that a naive application of Theorem~\ref{thm:BOS_RIP} may require more than $M^D$ (or $M^d$) samples.  However, preconditioning can effectively reduce this BOS constant to $K = \sqrt{3}^d$ in practice \cite{R12}.}  As a result, we make it possible to recover a new class of extremely high-dimensional BOPB-sparse functions which are simply too complicated to be approximated by other means.  We are now prepared to discuss our main results.

\subsection{Main Results}
\label{sec:MainResults}

The proposed sublinear-time algorithm is a greedy pursuit method motivated by CoSaMP\cite{needell2009cosamp}, HTP\cite{foucart2011hard}, and their sublinear-time predecessors \cite{gilbert2006sublinear,Gilbert:2007:OSF:1250790.1250824}.  In particular, it is obtained from CoSaMP by replacing CoSaMP's support identification procedure with a new sublinear-time support identification procedure.  See Algorithm~\ref{alg:main} in Section~\ref{sec:proposed} for pseudocode and other details.  Our main result demonstrates the existence of a relatively small grid of points $\mathcal{G} \subset \mathcal{D}$ which allows Algorithm~\ref{alg:main} to recover any given BOPB-sparse function $f$ in sublinear-time from its evaluations on $\mathcal{G}$.  We refer the reader to Section~\ref{sec:UniversalGrids} for a detailed description of the grid set $\mathcal{G}$ and its use in Algorithm~\ref{alg:main}. The following theorem is a simplified version of Theorem~\ref{mainThm} in Section~\ref{sec:proposed}.\\   


\begin{mytheorem*}[Main Result]
Suppose that  $\left\{ T_{\vect{n}}~\big|~\vect{n} \in \mathcal{I} \subseteq [M]^D \right\}$ is a BOS where each basis function $T_{\vect{n}}$ is defined as per \eqref{def:T_n}, and $T_{j;0}\equiv 1$ for all $j \in [D]$.
Let $\mathcal{F}_{s}$ be the subset of all functions $f \in {\rm span} \left\{ T_{\vect{n}}~\big|~\vect{n} \in \mathcal{I} \right\}$ whose coefficient vectors are $s$-sparse, and let $\vect{c}_f \in \mathbb{C}^{\mathcal{I}}$ denote the $s$-sparse coefficient vector for each $f \in \mathcal{F}_{s}$.  Fix $p\in \left(0,1/3 \right)$, a precision parameter $\eta > 0$, $1 \leq d \leq D$, and $\displaystyle K =\sup_{\vect{n} \in [M]^D \text{ s.t.} \|\vect{n}\|_{0}\leq d } \|T_{\vect{n}}\|_{\infty}$.
Then, one can randomly select a set of i.i.d. Gaussian weights $\mathcal{W} \subset \mathbb{R}$ for use in \eqref{def:fk}, and also randomly construct a compressive sensing grid, $\mathcal{G} \subset \mathcal{D}$, whose total cardinality $\left| \mathcal{G} \right|$ is
$\mathcal{O}\left( \left( s D \mathcal{L}' K^2 + s^3D K^4 \right) \max \left\{d^4 \log^4(s) \log^4({D^2M}), \log^2(\frac{D}{p}) \right\} \right)$, 
such that the following property holds $\forall f \in \mathcal{F}_{s}$ with probability greater than $1-3p$:
\begin{adjustwidth}{0.25in}{0.25in}
Let $\vect{y} = f(\mathcal{G})$ consist of samples from $f \in \mathcal{F}_{s}$ on $\mathcal{G}$.  If Algorithm \ref{alg:main} is granted access to $\vect{y}$, $\mathcal{G}$, and $\mathcal{W}$, then it will produce an $s$-sparse approximation $\vect{a} \in \mathbb{C}^{\mathcal{I}}$ s.t. 
\begin{equation*}
\|\vect{c}_f-\vect{a}\|_2 \leq C\eta,
\end{equation*}
where $C > 0$ is an absolute constant.
\end{adjustwidth}
Furthermore, the total runtime complexity of Algorithm \ref{alg:main} is always\\ $\mathcal{O}\Big(\Big( \left( s^2 D^2 \mathcal{L} K^2 + s^5 D^2 K^4   \right) \max \left\{d^4\log^4(s) \log^4({D^2M}), \log^2(\frac{D}{p}) \right\} \Big)\times \log \frac{\|\vect{c}_f\|_2}{\eta}\Big)$.  
\end{mytheorem*}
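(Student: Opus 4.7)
The plan is to prove the theorem by substituting a new sublinear-time support identification oracle into the standard CoSaMP template. The argument splits into three stages: (i) construct the grid $\mathcal{G}$ as a disjoint union of independent random sub-grids, each of which equips an associated BOS sampling matrix with an RIP of order $\mathcal{O}(s)$ via Theorem~\ref{thm:BOS_RIP}; (ii) on those RIP events, show that the dimension-incremental, Gaussian-weighted energy-hashing procedure of Sections~\ref{secEntryID}--\ref{secPairing} correctly recovers an $\mathcal{O}(s)$-superset of the support of the best $s$-term approximation of the current residual; and (iii) close the CoSaMP outer loop via the usual contraction $\|\vect{c}_f - \vect{a}^{(k+1)}\|_2 \leq \tfrac{1}{2}\|\vect{c}_f - \vect{a}^{(k)}\|_2 + \tau\eta$, which yields the error bound $\|\vect{c}_f - \vect{a}\|_2 \leq C\eta$ after $\mathcal{O}(\log(\|\vect{c}_f\|_2/\eta))$ iterations.

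For the grid construction in stage (i), I would decompose $\mathcal{G}$ into $D$ dimension-specific sub-grids $\mathcal{G}_1,\dots,\mathcal{G}_D$ together with a small energy-estimation sub-grid $\mathcal{G}^E$; each $\mathcal{G}_j$ is sampled i.i.d.\ from $\vect{\nu}$ with cardinality chosen so that, by Theorem~\ref{thm:BOS_RIP}, the restriction of the BOS sampling matrix to any index set of the form $\{\vect{n} \in \mathcal{I} : n_j = k\}$ has restricted isometry constant below a fixed threshold, uniformly for all $k \in [M]$. Combined with the Gaussian weights $\mathcal{W}$ entering \eqref{def:fk}, the samples on $\mathcal{G}_j$ would then allow simultaneous estimation of each partial energy $E_{j,k} \approx \sum_{\vect{n}\in\mathcal{S},\,n_j=k} |c_{\vect{n}}|^2$ for every $k \in [M]$. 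For stage (ii), on the RIP event these energy proxies are correct up to a multiplicative $(1\pm\delta)$ factor, so thresholding separates the true coordinate projections $\pi_j(\mathcal{S})$ from the spurious values; the Gaussian weighting is precisely what guarantees that no destructive cancellations occur among coefficients sharing a common coordinate, so that signal and noise are genuinely separated. The pairing step of Section~\ref{secPairing} then reassembles the at most $s$ full index vectors $\vect{n}\in\mathcal{S}$ via a second round of energy hashing that exploits the same RIP events. A union bound over the $\mathcal{O}(D)$ sub-grids, the $\mathcal{O}(\log(\|\vect{c}_f\|_2/\eta))$ outer iterations, and the probability parameter $p$ then yields the advertised total grid cardinality, and counting the $\mathcal{O}(sDM)$ energy tests per iteration plus the least-squares cost on the identified support gives the stated runtime. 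Stage (iii) is a routine adaptation of the CoSaMP analysis.

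The principal obstacle lies in stage (ii). Three difficulties enter simultaneously: the partial energies $E_{j,k}$ are correlated through the shared sampling grid $\mathcal{G}_j$, the residual at each CoSaMP iteration is only approximately $s$-sparse rather than exactly so, and the thresholds must succeed uniformly across $\mathcal{O}(sDM)$ hypothesis tests without inflating the sample complexity by an extra factor of $M$. Because the theorem is universal, the \emph{same} grid $\mathcal{G}$ and weights $\mathcal{W}$ must work for every $f \in \mathcal{F}_s$ simultaneously, which rules out per-function averaging arguments and forces the RIP and Gaussian non-cancellation guarantees into a uniform for-all form. Designing the pairing step so that its cost scales polynomially in $s$, $d$, $D$, and $\log N$ rather than with $M^D$, or even with the naive pairing cost of $\mathcal{O}(sDM \cdot s)$, is the delicate bookkeeping that ultimately produces the sublinear-time runtime asserted in the theorem.
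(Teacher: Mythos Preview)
Your high-level CoSaMP template in stage (iii) matches the paper, but there are two concrete gaps in stages (i)--(ii) that would break the argument as written.

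First, you take a union bound ``over the $\mathcal{O}(\log(\|\vect{c}_f\|_2/\eta))$ outer iterations.'' This cannot be right: the grid cardinality in the theorem statement does not depend on $\eta$ or $\|\vect{c}_f\|_2$, so a union bound over iterations would produce a bound that contradicts the conclusion. The paper avoids this entirely. The RIP events are properties of the sampling matrices alone and hold deterministically once $\mathcal{G}$ is drawn; the Gaussian non-cancellation guarantee is made \emph{uniform over all $2s$-sparse residuals} via an $\epsilon$-net argument (Theorem~\ref{Thm:FiniteSetMedtoUn2}): one covers the unit sphere of $2s$-sparse vectors by a net of cardinality $(\mathbbm{e}N/2s)^{2s}(1+2/\epsilon)^{2s}$, applies a median-of-$L$ Chernoff bound at each net point, and then perturbs. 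This is why $L$ scales like $s\cdot d\cdot\log(DM)$ rather than like $\log(\text{iterations})$. You yourself flag the need for uniformity in your final paragraph, but that observation is inconsistent with the union bound you propose earlier, and you do not supply the mechanism (the $\epsilon$-net over $2s$-sparse vectors combined with the median estimator) that actually delivers it.

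Second, your description of $\mathcal{G}_j$ as ``sampled i.i.d.\ from $\vect{\nu}$'' does not match what is needed. The entry-identification grid $\mathcal{G}^I_j$ in the paper is a tensor product: $m$ random points in the $(D-1)$-dimensional slice $\mathcal{D}'_j$ crossed with a \emph{deterministic quadrature grid} $\mathcal{U}_j\subset\mathcal{D}_j$ of size $\mathcal{L}'_j$. This structure is essential, because the inner product $\langle h_{j;k}, T_{j;\widetilde{n}}\rangle_{(\mathcal{D}_j,\nu_j)}$ in line~10 of Algorithm~\ref{alg:Entry} must be computed \emph{exactly}, not estimated; otherwise the clean Gaussian distribution $\mathcal{N}(0,\|\widetilde{A}_j\vect{r}_{j;\widetilde{n}}\|_2^2)$ that drives Lemma~\ref{lem:fkInnerProfBig} is lost. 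Likewise, the pairing grid $\mathcal{G}^P_j$ is a product $\mathcal{W}_j\times\mathcal{Z}_j$ of \emph{two independent} random grids, giving two separate RIP matrices $B_j$ and $C_j$; these are not ``the same RIP events'' as in entry identification. A minor related point: in the exactly sparse setting of this theorem the residual $\vect{r}=\vect{c}_f-\vect{a}^t$ is exactly $2s$-sparse, not approximately, so that difficulty you list does not arise here.
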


Note that Algorithm \ref{alg:main} will run in sublinear-time whenever $s^5 D^2 \mathcal{L} K^4 d^4 \ll |\mathcal{I}|$ (neglecting logarithmic factors).  Here and in the theorem above the parameters $\mathcal{L}$ and $\mathcal{L}'$ depend on your choice of numerical method for computing the inner product between a sparse function in the span of each one-dimensional BOS $\mathcal{B}_j = \left\{ T_{j;m}~\big|~m\in [M] \right\}$.  More specifically, let $\mathcal{L}'_j$ represent the number of function evaluations one needs in order to compute all $M$-inner products $\left\{ \langle g, T_{j;\widetilde{n}} \rangle \right\}_{\widetilde{n} \in [M]}$ in $\mathcal{O}(\mathcal{L})$-time for any given function $g: \mathcal{D}_j \rightarrow \mathbb{C}$ belonging to the span of $\mathcal{B}_j$ that is also $s$-sparse in $\mathcal{B}_j$.  We then set $\mathcal{L}' := \max_{j \in [D]} \mathcal{L}'_j$.  For example, if each BOS $\mathcal{B}_j$ consists of orthonormal polynomials whose degrees are all bounded above by $M$ then quadrature rules such as Gaussian quadrature or Chebyshev quadrature give $\mathcal{L}= \mathcal{O}(M^2)$ and $\mathcal{L}'=\mathcal{O}(M)$ \cite{Dahlquist:2008:NMS:1383510}.  If each $\mathcal{B}_j$ is either the standard Fourier, sine, cosine, or Chebyschev basis then the Fast Fourier Transform (FFT) can always be used to give $\mathcal{L}=\mathcal{O}(M\log M)$ and $\mathcal{L}'=\mathcal{O}(M)$ \cite{Dahlquist:2008:NMS:1383510}.  

Moreover, there are several sublinear-time sparse Fourier transforms as well as sparse harmonic transforms for other bases which could also be used to give other valid $\mathcal{L}'$ and $\mathcal{L}$ combinations \cite{gilbert2014recent,bittens2017deterministic,merhi2017new,gilbert2005improved,iwen2007empirical,iwen2008deterministic,bailey2012design,hassanieh2012simple,hu2015rapidly,iwen2010combinatorial,segal2013improved,iwen2013improved,clw}. These typically have $\mathcal{O}(s^c \log^{c'} M)$ runtime and sampling complexities for small positive absolute constants $c$ and $c'$. 
As a result, one can obtain much stronger results than the main theorem above when $s \ll M$ and every one-dimensional BOS $\mathcal{B}_j$ is either the Fourier, sine, cosine, or Chebyshev basis.  The following corollary of our main theorem is obtained by using deterministic one-dimensional SFT results from \cite{iwen2013improved} and \cite{hu2015rapidly} in order to compute all of the nonzero inner products in lines 6 -- 13 of Algorithm~\ref{alg:Entry}.  They lead to $\mathcal{L}'$ and $\mathcal{L}$ values in Section~\ref{sec:proposed}'s Theorem~\ref{mainThm} of size $\mathcal{O}(s^2 \log^4 M)$. \\ 

\begin{corollary}
Suppose that  $\left\{ T_{\vect{n}}~\big|~\vect{n} \in \mathcal{I} \subseteq [M]^D \right\}$ is a BOS where each basis function $T_{\vect{n}}$ is defined as per \eqref{def:T_n}, and where every one-dimensional BOS $\mathcal{B}_j$ is either the Fourier, sine, cosine, or Chebyshev basis.  
Let $\mathcal{F}_{s}$ be the subset of all functions $f \in {\rm span} \left\{ T_{\vect{n}}~\big|~\vect{n} \in \mathcal{I} \right\}$ whose coefficient vectors are $s$-sparse, and let $\vect{c}_f \in \mathbb{C}^{\mathcal{I}}$ denote the $s$-sparse coefficient vector for each $f \in \mathcal{F}_{s}$.  Fix $p\in \left(0,1/3 \right)$, a precision parameter $\eta > 0$, $1 \leq d \leq D$, and let $\displaystyle K =\sup_{\vect{n} \in [M]^D \text{ s.t.} \|\vect{n}\|_{0}\leq d } \|T_{\vect{n}}\|_{\infty}$.  Then, one can randomly select a set of i.i.d. Gaussian weights $\mathcal{W} \subset \mathbb{R}$ for use in \eqref{def:fk}, and also randomly construct a compressive sensing grid, $\mathcal{G} \subset \mathcal{D}$, whose total cardinality $\left| \mathcal{G} \right|$ is\\
$\mathcal{O}\left(s^3 D \log^4 (M) K^4\max \left\{d^4 \log^4(s) \log^4({D^2M}), \log^2(\frac{D}{p}) \right\} \right)$, 
such that the following property holds $\forall f \in \mathcal{F}_{s}$ with probability greater than $1-3p$:
\begin{adjustwidth}{0.25in}{0.25in}
Let $\vect{y} = f(\mathcal{G})$ consist of samples from $f \in \mathcal{F}_{s}$ on $\mathcal{G}$.  If Algorithm \ref{alg:main} is granted access to $\vect{y}$, $\mathcal{G}$, and $\mathcal{W}$, then it will produce an $s$-sparse approximation $\vect{a} \in \mathbb{C}^{\mathcal{I}}$ s.t. 
\begin{equation*}
\|\vect{c}_f-\vect{a}\|_2 \leq C\eta,
\end{equation*}
where $C > 0$ is an absolute constant.
\end{adjustwidth}
Furthermore, the total runtime complexity of Algorithm \ref{alg:main} is always\\ $\mathcal{O}\Big(\Big( s^5 D^2 \log^4 (M) K^4   \max \left\{d^4\log^4(s) \log^4({D^2M}), \log^2(\frac{D}{p}) \right\} \Big)\times \log \frac{\|\vect{c}_f\|_2}{\eta}\Big)$.  
  \label{cor:fou}
\end{corollary}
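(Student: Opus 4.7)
The plan is to view Corollary~\ref{cor:fou} as a direct specialization of the Main Result obtained by plugging in concrete, improved values of the parameters $\mathcal{L}$ and $\mathcal{L}'$ that govern how fast the one-dimensional inner products in Algorithm~\ref{alg:Entry} can be computed. Since the Main Theorem's sampling and runtime bounds are stated as $\mathcal{O}(s^3 D \mathcal{L}' K^4 \cdot (\ldots))$ and $\mathcal{O}(s^5 D^2 \mathcal{L} K^4 \cdot (\ldots) \cdot \log(\|\vect{c}_f\|_2/\eta))$ respectively, all that needs to be justified is that when every one-dimensional basis $\mathcal{B}_j$ is Fourier, sine, cosine, or Chebyshev, we may legitimately take $\mathcal{L} = \mathcal{L}' = \mathcal{O}(s^2 \log^4 M)$.

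First I would recall the definition of $\mathcal{L}'_j$ and $\mathcal{L}$ from the discussion preceding the corollary: $\mathcal{L}'_j$ counts the samples of $g$ (any $s$-sparse function in $\mathcal{B}_j$) needed so that all $M$ inner products $\langle g, T_{j;\widetilde{n}}\rangle$, $\widetilde{n}\in[M]$, can be recovered in $\mathcal{O}(\mathcal{L})$ time. Because $g$ is $s$-sparse in $\mathcal{B}_j$, recovering the full inner-product vector is exactly a one-dimensional $s$-sparse harmonic transform problem in $\mathcal{B}_j$. The deterministic sparse Fourier transform of \cite{iwen2013improved} achieves universal $s$-sparse recovery from a fixed, nonadaptive sample set of size $\mathcal{O}(s^2 \log^4 M)$ in time $\mathcal{O}(s^2 \log^4 M)$ for the discrete Fourier basis. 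The Chebyshev case is handled identically via the change-of-variables $x \mapsto \cos\theta$, under which Chebyshev polynomials pull back to cosines; the sine and cosine bases are trivial subcases obtained by folding the Fourier basis, as spelled out in \cite{hu2015rapidly}. In each case one obtains $\mathcal{L}'_j = \mathcal{L} = \mathcal{O}(s^2 \log^4 M)$ uniformly in $j$, hence $\mathcal{L}' = \max_j \mathcal{L}'_j = \mathcal{O}(s^2 \log^4 M)$.

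With these values in hand I would simply substitute into the two big-$\mathcal{O}$ expressions of the Main Theorem. The $\mathcal{L}'$ factor in the grid bound becomes $\mathcal{O}(s^2 \log^4 M)$, producing the stated grid size $\mathcal{O}(s^3 D \log^4(M) K^4 \max\{d^4\log^4(s)\log^4(D^2 M), \log^2(D/p)\})$; likewise the $\mathcal{L}$ factor in the runtime bound gives $\mathcal{O}(s^5 D^2 \log^4(M) K^4 \max\{d^4\log^4(s)\log^4(D^2 M), \log^2(D/p)\}) \cdot \log(\|\vect{c}_f\|_2/\eta)$. The recovery error bound $\|\vect{c}_f - \vect{a}\|_2 \leq C\eta$ and the success probability $1 - 3p$ are inherited verbatim from the Main Theorem because neither depends on the particular one-dimensional subroutine used to evaluate lines 6--13 of Algorithm~\ref{alg:Entry}.

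The only mildly nontrivial step is checking that the deterministic one-dimensional SFT algorithms of \cite{iwen2013improved, hu2015rapidly} can indeed be used in a \emph{black-box} fashion inside Algorithm~\ref{alg:Entry}: in particular, their sample set is fixed and nonadaptive, their guarantee is universal over all $s$-sparse inputs, and their output is exact (in exact arithmetic) for exactly $s$-sparse functions. These properties match precisely what the Main Theorem's proof requires of the inner-product oracle, so the substitution is clean and no additional error terms or failure probabilities need to be introduced. This is the sole point I would check in detail; everything else is bookkeeping of asymptotic factors.
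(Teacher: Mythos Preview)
Your overall approach matches the paper's: the corollary is obtained by invoking the deterministic one-dimensional sparse Fourier/Chebyshev transforms of \cite{iwen2013improved,hu2015rapidly} to supply $\mathcal{L}=\mathcal{L}'=\mathcal{O}(s^2\log^4 M)$ and then substituting.  However, there is a genuine arithmetic gap in your write-up.  You substitute into the \emph{simplified} Main Result, whose grid bound is $\mathcal{O}(s^3 D\,\mathcal{L}'\,K^4\cdot\max\{\ldots\})$.  Plugging $\mathcal{L}'=\mathcal{O}(s^2\log^4 M)$ into that expression yields $\mathcal{O}(s^5 D\log^4(M)K^4\cdot\max\{\ldots\})$, not the $\mathcal{O}(s^3 D\log^4(M)K^4\cdot\max\{\ldots\})$ claimed in the corollary; the same issue would inflate the runtime to $s^7$.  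Your sentence ``The $\mathcal{L}'$ factor in the grid bound becomes $\mathcal{O}(s^2\log^4 M)$, producing the stated grid size $\mathcal{O}(s^3 D\log^4(M)K^4\ldots)$'' is simply an incorrect multiplication.

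The fix, which is exactly what the paper does, is to substitute into the more detailed Theorem~\ref{mainThm} rather than its simplified restatement.  There the bounds have \emph{two} additive terms: the term carrying $\mathcal{L}'$ (resp.\ $\mathcal{L}$) has only $s K^2$ (resp.\ $s^2 K^2$) in front, while the $s^3 K^4$ (resp.\ $s^5 K^4$) term does not involve $\mathcal{L}'$ or $\mathcal{L}$ at all.  Substituting $\mathcal{L}'=\mathcal{L}=\mathcal{O}(s^2\log^4 M)$ into the first terms gives $s^3 D\log^4(M)K^2$ and $s^4 D^2\log^4(M)K^2$ respectively, which are then absorbed (after harmonizing the log factors and using $K^2\leq K^4$) by the stated single-term bounds.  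Once you route the substitution through Theorem~\ref{mainThm}, your remaining remarks about the black-box use of the deterministic SFTs and the inherited error/probability guarantees are correct and complete the argument.
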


Note that the runtime dependance achieved by the corollary above scales sublinearly with $M$, quadratically in $D$, and at most polynomially in the parameter $d \leq D$ used to determine $\mathcal{I}$.  We also remind the reader that the BOS constant $K$ for the Fourier basis is $1$.  As a result, the $K$ dependence in the runtime complexity vanishes entirely when the BOPB in question is the multidimensional Fourier basis.\footnote{Though the resulting $\mathcal{O}\left( s^5 D^2 d^4 {\rm polylog}(MDs\|\vect{c}\|_2/\eta p) \right)$-runtime achieved by Corollary~\ref{cor:fou} for the multidimensional Fourier basis is strictly worse than the best existing noise robust and deterministic sublinear-time results for that basis \cite{iwen2013improved} (except perhaps when $s^3 d^4 \ll D^2$), we emphasize that it is achieved with a different and significantly less specialized grid $\mathcal{G}$ herein.}  Finally, there are also sublinear-time sparse transforms for one-dimensional Legendre polynomial systems \cite{hu2015rapidly}, though the theoretical results for sparse recovery therein require additional support restrictions beyond simple sparsity.  Thus, Corollary~\ref{cor:fou} can also be extended to restricted types of Legendre-sparse functions in order to achieve sublinear-in-$M$ runtimes.  A detailed development of such results is left for future work, however.

\subsection{A Comparison of Corollary~\ref{cor:fou} to Prior SFT Algorithms}
\label{equ:SFTcompareCoro1}

\begin{figure}
\begin{center} 
\resizebox{\hsize}{!}{
\begin{tabular}{|c|c|c|c|c|c|}
\hline
\textbf{SFT Method} & \textbf{Runtime Complexity}  & \textbf{Sampling Complexity} & \textbf{$\mathcal{I} \subseteq [M]^D$} & {\bf MB} & {\bf Error Guarantees} \\ \hline
Corollary~\ref{cor:fou} & $s^5 d^4 D^2 \cdot \log^5(s) \log^8({DM})$ & $s^3 d^4 D \cdot \log^4(s) \log^8({DM}) $ & Any & Yes & Uniform Exact\\ \hline
Thm 8 in \cite{iwen2013improved} & $s^2 D^4 \cdot \log^4(MD)$ & $s^2 D^4 \cdot \log^4(MD)$ & $[M]^D$ & No & Det. Uniform Best-$s$ Term\\ \hline
Thm 12 in \cite{morotti2017explicit} & $s^2 M^2 D^3 \cdot \log (M)$ & $s^2 M D^3 \cdot \log (M)$ & $[M]^D$ & No & Det. Uniform Best-$s$ Term\\ \hline
Thm 8 in \cite{iwen2013improved} & $s D^4 \cdot \log^4(MD)$ & $s D^4 \cdot \log^4(MD)$ & $[M]^D$ & No & Nonuniform Best-$s$ Term\\ \hline
Thm 3.5 in \cite{kapralov2016sparse} & $s 2^{\mathcal{O}(D^2)}D^{D+3} \cdot \log^{D+3} M$ & $s 2^{\mathcal{O}(D^2)} D \cdot \log M~*$ & $[M]^D$ & No & Nonuniform Best-$s$ Term\\ \hline
Thm 4.6 in \cite{kammerer2017high} & $s^3 M D^2 \cdot \log^4(s)  \log^2(DM) ~*$ & $s^3 M D \cdot \log^3(s) \log(DM) ~*$ & Any & No & Nonuniform Exact \\ \hline
{Thm 4.7 \cite{kammerer2017high} abt \cite{potts2016sparse}} & $s^5 D(s+M\log s) \cdot \log^3(s)  \log^3(DM) ~*$ & $s^5 M D \cdot \log^3(s) \log^3(DM)$ & Any & No & Nonuniform Exact \\ \hline
Alg in \cite{potts2016sparse} & $s^3 D + s^2 M D \cdot \log(sM)$~ $\dagger$ & $s^2 M D$~ $\dagger$ & Any & No & Empirical/Noise Robust \\ \hline
Mod. Alg \cite{potts2016sparse} & $s^3 D$~ $\dagger$ & $s D + M D$~ $\dagger$ & Any & No & Empirical/Noise Robust \\ \hline
Alg in \cite{choi2016multi} & {Avg. Case:}~~$s D \cdot \log (s)$ & Avg. Case:~~$s D$ & $[M]^D$ & No & Empirical/Low Noise \\ \hline
Alg in \cite{choi2019multiscale} & {Avg. Case:}~~$s D \cdot \log (s) \log (M)$ & Avg. Case:~~$s D \cdot \log (M)$ & $[M]^D$ & No & Empirical/Allows Noise \\ \hline
\end{tabular}}
\end{center}
\caption{\small A comparison of Corollary~\ref{cor:fou} with several existing SFT results from \cite{iwen2013improved,potts2016sparse,choi2016multi,kapralov2016sparse,choi2019multiscale,morotti2017explicit,kammerer2017high}.  To make the runtime and sampling complexity bounds easily comparable we have set all probabilities of failure $p$ for algorithms involving randomized constructions to be $p = \left(\frac{1}{DM} \right)^c$ for a universal constant $c \geq 1$ (except for \cite{kapralov2016sparse} which formulates its guarantees for $p = 0.02$), and have also used that $\log(M)$ and $\log({D^2M})$ are both $\mathcal{O}\left(\log({DM})\right)$ throughout.  We also note that $K = 1$ in the Fourier basis setting considered in the table.  To again aid in easier table comparisons we focussed on the recovery of functions with $\|\vect{c}_f\|_2 \leq c s$ and $\eta^{-1} \leq c$ for a universal constant $c \geq 1$ (representing machine precision) to simplify the $\log \left( \frac{\|\vect{c}_f\|_2}{\eta} \right)$ term in the runtime complexity of Corollary~\ref{cor:fou}.  Furthermore, we used that $\log(|\mathcal{I}|) \leq d \log(DM)$ which follows from the assumption herein that $N=|\mathcal{I}|={D \choose d} M^d \leq (DM)^d$.  Finally,~*~in the table denotes that we dropped $\log |\log(\cdot)|$ factors in the complexity bounds from \cite{kammerer2017high,kapralov2016sparse} to help save space in the table, and $\dagger$ indicates that \cite{potts2016sparse} formulates it's bounds under the assumption that $\sqrt{M} \lesssim s \lesssim M^D$ (as opposed to assuming that $s \lesssim M^D$ is just larger than some absolute constant).}
\label{fig:SFTcompfig}
\end{figure}

As mentioned above, the methods developed herein can be considered as generalizations of existing SFT techniques to BOP bases. Figure~\ref{fig:SFTcompfig} compares Corollary~\ref{cor:fou} in the case of the multidimensional Fourier basis to several existing SFT results for periodic functions of $D$-variables \cite{iwen2013improved,potts2016sparse,choi2016multi,kapralov2016sparse,choi2019multiscale,morotti2017explicit,kammerer2017high}.  The second and third columns of the table give the worst case runtime and sampling complexities of the methods, respectively, with the exception of the rows pertaining to \cite{choi2016multi,choi2019multiscale} which list average case complexities for generic signals.  Note that all of the quoted runtimes are sublinear in that they scale like $o(M^D)$ for sufficiently large $M$ and $D$.  The fourth column indicates whether each SFT paper considers improving its performance for smaller index sets $\mathcal{I} \subset [M]^D$ with $|\mathcal{I}| \ll M^D$, or not.  As indicated there, the majority of previous results only considered $\mathcal{I} = [M]^D$ with the notable exceptions of \cite{potts2016sparse,kammerer2017high}.  The fifth columns indicates if the methods allow for the easy extension of the methods and results to Mixed Bases (MB), or not.  As can be seen from the table, only Corollary~\ref{cor:fou} provides guarantees for, e.g., mixed Fourier/Chebyshev product bases at the expense of increasing $K$.

The final column in Figure~\ref{fig:SFTcompfig} summarizes the theoretical error guarantees proven about each method in the table.  There, ``Best-$s$ Term'' refers to methods that prove theoretical best-$s$ term approximation guarantees of the type considered by Cohen et al. in \cite{cohen2009compressed} for all periodic functions with Fourier series coefficients that decay rapidly enough.  Similarly, ``Exact'' therein refers to methods that provide strictly weaker guarantees regarding the recovery of functions which are exactly $s$-sparse in Fourier (i.e., of the form of \eqref{equ:ExactlySparsefunc}).\footnote{Best $s$-term approximation guarantees imply the exact recovery of exactly $s$-sparse functions.}  Note that such exactly $s$-sparse functions correspond to multivariate trigonometric polynomials with exactly $s$ nonzero terms in the SFT context.  Both of these types of guarantees can themselves be either ``Uniform'' (i.e., providing sampling sets of the type discussed below in \S\ref{sec:UniversalGrids} that work {\it for all} functions of the given class with high probability), or ``Nonuniform'' (i.e., providing sampling sets that work for {\it any one arbitrary} function of the given class with high probability, but not necessarily for all of them).  In addition, two of the results in \cite{iwen2013improved,morotti2017explicit} provide entirely deterministic and explicit sampling constructions with no probability of failure in achieving their respective approximation guarantees.  These two error guarantees are denoted with the prefix ``Det.'' for {\it Det}erministic.  

Finally, several other methods \cite{potts2016sparse,choi2016multi,choi2019multiscale} provide theoretical analysis which doesn't ultimately guarantee that they can approximate an arbitrary exactly sparse function \eqref{equ:ExactlySparsefunc} to machine precision with high probability.  All of these methods provide extensive empirical tests to demonstrate that such failures are indeed rare, however, and so are denoted in the last column of Figure~\ref{fig:SFTcompfig} by the term ``Empirical'' along with a description of the additive errors they are observed to tolerate on their function samples (``Noise Robust'' indicates good tolerance to arbitrary perturbations including random noise, ``Allows Noise'' indicates tolerance to random i.i.d. mean 0 noise, and ``Low Noise'' indicates a tolerance to small errors on the level of numerical roundoff).  We refer the readers to the original papers for additional details on their respective theoretical guarantees and empirical performance.

\subsection{Randomly Constructed Grids with Universal Approximation Properties}
\label{sec:UniversalGrids}

Fix a BOP basis $\mathcal{B}$ and sparsity level $s$.  We will call any set $\mathcal{G} \subset \mathcal{D}$ a {\em compressive sensing grid} if and only if $\exists$ a set of weights $\mathcal{W}$ s.t. $\forall$ $f:\mathcal{D} \rightarrow \mathbb{C}$ that are $s$-sparse in $\mathcal{B}$
$${\rm Algorithm~\ref{alg:main}~ with~ weights}~ \mathcal{W}~{\rm can ~recover} ~f~ {\rm from ~ its~evaluations~ on}~ \mathcal{G} $$
is true.  As mentioned above, our main results demonstrate the existence of relatively small compressive sensing grids by randomly constructing highly structured sets of points that are then shown to be compressive sensing grids with high probability.  We emphasize that our use of probability in this paper is entirely constrained to $(i)$ the initial choice of the grid $\mathcal{G}$ given a BOP basis $\mathcal{B}$ and sparsity level $s$, and to $(ii)$ the entirely independent and one-time initial choice of a set of random Gaussian weights $\mathcal{W}$ for use in \eqref{def:fk} (i.e., as part of the initialization phase for Algorithm~\ref{alg:main}).  Algorithm~\ref{alg:main} is entirely deterministic once both $\mathcal{G}$ and $\mathcal{W}$ have been chosen.

The compressing sensing grids $\mathcal{G}$ utilized herein will be the union of three distinct sets of points in $\mathcal{D}$.  The first set of points is the set $\mathcal{G}^E \subset \mathcal{D}$ which has already been introduced in Section~\ref{sec:CompSense} as the set of sampling points at which $f$ is evaluated in order to obtain $\vect{y^{\rm E}}$ in \eqref{equ:DefyE}.  This set of points is used in Algorithm~\ref{alg:main} in order to estimate the basis coefficients for the basis elements identified by Algorithms~\ref{alg:Entry} and~\ref{alg:Pairing}.  The second and third sets included in $\mathcal{G}$, $\mathcal{G}^I \subset \mathcal{D}$ and $\mathcal{G}^P \subset \mathcal{D}$, are utilized by Algorithm~\ref{alg:Entry} and Algorithm~\ref{alg:Pairing}, respectively.  
Here we will summarize the main ideas behind their construction (their precise definition is given in Section \ref{sec:SublinCompSense} below).
	
The set $\mathcal{G}^I$ is the union of $D$ grids $\mathcal{G}_j^I$, $j\in [D]$, where each $\mathcal{G}_j^I$ is designed to allow for the identification of the $j$-th entry of the energetic index vectors in $\mathcal{S}$ from \eqref{equ:ExactlySparsefunc}.  Each $\mathcal{G}_j^I$ set consists of all combinations of an appropriate quadrature set for the $j$-th entry (to allow for an approximation of one-dimensional integrals in that variable) crossed with a random sampling set for all other entries (necessary for another approximate numerical integration in these directions that singles out the  $j$-th entry of interest).  Viewed another way, each $\mathcal{G}_j^I$ set contains the points necessary to exactly integrate the functions of the $j$-th variable $x_j$ one obtains from $f$ in \eqref{equ:ExactlySparsefunc} after fixing the other $D-1$ variables to be random constants, for several different collections of random constants.

Similarly, the set $\mathcal{G}^P$ is also a union of $D-1$ grids $\mathcal{G}_j^P$, one for each $j\in [D] \setminus \{0\}$. Here the grids are designed to allow for the sequential buildup of the energetic index vectors contained in $\mathcal{S}$ from \eqref{equ:ExactlySparsefunc} component by component. More precisely, the samples in $\mathcal{G}_j^P$ will be used  to pair the first $j$ components of each element of $\mathcal{S}$ with its correct $(j+1)$-th component.  To generate the grid $\mathcal{G}_j^P$ we generate a set $W_j$ of independent samples of a random vector in $\mathbb{C}^j$ and a set $Z_j$ of independent samples of a random vector in $\mathbb{C}^{D-j}$. We then choose $\mathcal{G}_j^P:=W_j\times Z_j$.  The underlying idea is that this construction allows one to reduce the original $D$-dimensional problem to a $(j+1)$-dimensional problem with energetic index vectors given by the first $j+1$ components of the energetic index vectors $\mathcal{S}$ from the full problem.  This is achieved by using the product structure, and the random construction of $Z_j$ which allows us to approximately integrate over the last $D-j-1$ variables of $f$ from \eqref{equ:ExactlySparsefunc}.  Then we can use the randomness of $W_j$ to identify the active frequencies among all combinations of the $j$-dimensional component vectors identified in the previous sequential step, and the $(j+1)$-th components of $\mathcal{S}$ identified using $\mathcal{G}_{j+1}^I$ above.

As we saw in Section~\ref{sec:MainResults}, it turns out that $\mathcal{G} := \mathcal{G}^E \cup \mathcal{G}^I \cup \mathcal{G}^P$ will be a compressive sensing grid with high probability even when each component set is chosen to have a relatively small cardinality.  The vast majority of the remainder of this paper will be dedicated to proving this fact.  We will begin in Section \ref{Preliminaries} by introducing additional notation that is used throughout the rest of the paper, and by interpreting our function evaluations on $\mathcal{G}$,
\begin{equation} 
\vect{y} = (\vect{y^{\rm E}}, \vect{y^{\rm I}}, \vect{y^{\rm P}})^T \in \mathbb{C}^{m'_1 + m'_2 + m'_3}
\label{Def:SamplesFromf}
\end{equation}
as standard compressive sensing measurements.  Next, in Section~\ref{sec:proposed}, Algorithm~\ref{alg:main} is discussed in detail and the main theorem above is proven with the help of a key technical lemma (i.e., Lemma~\ref{lem:replCosamp}) that guarantees the accuracy of our proposed support identification method.  Lemma~\ref{lem:replCosamp} is then proven in Section~\ref{sec:Analysis}.  Finally, a numerical evaluation is carried out in Section \ref{Empirical Evaluation} that demonstrates that Algorithm~\ref{alg:main} both behaves as expected, and is robust to noisy function evaluations.  The paper then concludes after a short discussion concerning future work in Section~\ref{Future Work}.

\section{Preliminaries} \label{Preliminaries}
\setcounter{equation}{0}

In this section we introduce the notation that will be used in the rest of this paper as well as the problem for which we will develop our proposed algorithm. We denote by $\mathbb{N}$ the set of natural numbers, $\mathbb{R}$ the set of real numbers, and $\mathbb{C}$ the set of complex numbers.  Let $[N]: = \{0, 1, 2, \dots, N-1 \}$ for $N \in \mathbb{N}$.

\subsection{Notation and Preliminaries} \label{Notation and Problem Setting}

In this paper all letters in boldface (other than probability measures such as $\vect{\nu}$) will always represent vectors.  
Vectors whose entries are indexed by {\it index vectors} in, e.g., $[M]^D$ will be assumed to have their entries ordered lexicographically for the purposes of, e.g., matrix-vector multiplications. Thus, we say either $\vect{v}\in \mathbb{C}^{[M]^D}$ or $\vect{v}\in \mathbb{C}^{M^D}$ when we want to emphasize that each entry $v_{\vect{n}}$ of $\vect{v}$ is corresponding to its index vector $\vect{n}$, or when we perform, e.g.,   matrix-vector multiplications, respectively. We further define the $\ell_0$ pseudo-norm of a vector ${\vect v}$ by $\|\vect{v}\|_0 := |\{i :{v}_i\neq 0\}|$ where the index $i$ refers to the $i^{\rm th}$ entry of the vector (in lexicographical order).  If $v \in \mathbb{C}$ is a scalar then we will also use the $\ell_0$-notation to correspond to the indicator function defined by
\begin{equation}
\|v\|_0 := 
\begin{cases}
1 & \textrm{if}~v \neq 0\\
0 & \textrm{if}~v = 0
\end{cases}.\label{equ:l0asIndicator}
\end{equation}

We will consider functions $f: \mathcal{D} \rightarrow \mathbb{C}$ given in a BOS product basis expansion below so that
\begin{equation}
f(\vect{x}):= \sum_{\vect{n}\in \mathcal{I} \subseteq [M]^D } c_{\vect{n}} T_{\vect{n}}(\vect{x})
\label{def:f}
\end{equation}
where $\vect{c} \in  \mathbb{C}^{[M]^D}$.
We will further assume that $f$ is approximately sparse in this BOS product basis. That is, we will assume that there exists some index set $\mathcal{S} \subset \mathcal{I} $ for an a priori known index set $\mathcal{I}\subseteq [M]^D$ such that $\mathcal{S}$ has the property that both $(i)$ $|\mathcal{S}| = s \ll |\mathcal{I}| \leq M^D$, and that $(ii)$ the set of coefficients $\mathcal{C} := \{ c_{\vect n} ~\big |~ {\vect n} \in \mathcal{S} \} \subset \mathbb{C}$ dominates $f$'s $\ell_2$-norm in the sense that
\begin{equation*}
\sum_{\vect{n}\in \mathcal{S} \subset \mathcal{I}} |c_{\vect{n}}|^2 \gg \sum_{\vect{n} \in [M]^D \setminus \mathcal{S}} |c_{\vect{n}}|^2 =: \epsilon^2,
\end{equation*}
for a relatively small number $\epsilon \geq 0$.  We emphasize here that absolutely nothing about $\mathcal{S}$ is known to us in advance beyond the fact that it is a subset of $\mathcal{I}$, and has cardinality at most $s$.  We must learn the identity of its elements ourselves by sampling $f$.

Our analysis herein will focus on the case where $\mathcal{I}$ is given by 
$$\mathcal{I} := \left\{\vect{n}\in [M]^D ~\big|~ \|\vect{n}\|_0 \leq d \right\}$$ 
for some $d\leq D$ (cf. Section \ref{sec:CompSense}).  Note that this includes, for $d=D$, the special case where $\mathcal{I} = [M]^D$.  We will call the index vectors $\vect{n} \in\mathcal{S}$ {\em energetic}. Our goal is to recover $\mathcal{S}$ and the associated coefficients $\mathcal{C}$ as rapidly as possible using only evaluations/samples from $f$.  This will, in turn, necessitate that we sample $f$ at very few locations in $\mathcal{D}$.  In this paper we will mainly focus on providing theoretical guarantees for the case where $\epsilon=0$ (i.e., for provably recovering $f$ that are exactly $s$-sparse in a {BOS} product basis).  Numerical experiments in Section \ref{Empirical Evaluation} demonstrate that the method also works when $\epsilon > 0$, however.  We leave theoretical guarantees in the case of $\epsilon > 0$ for future consideration.

\subsection{Definitions Required for Support Identification}
\label{sec:SuppIDDefs}

As with most compressive sensing and sparse approximation problems we will see that identifying the function $f$'s support $\mathcal{S}$ is the most difficult part of recovering $f$.  As a result our proposed iterative algorithm spends the vast majority of its time in every iteration recovering as many energetic $\vect{n}=(n_0,n_1, \cdots,n_{D-1}) \in \mathcal{S}$ as it can.  Only after doing so does it then approximate a sparse vector $\vect{c}\in \mathbb{C}^{[M]^D}$ containing nonzero coefficients $c_{\vect{n}}$ for each discovered $\vect{n}\in \mathcal{S}$. Here, each $\vect{n}$ will be referred to as an {\it index vector} of an entry in $\vect{c}$.  Let ${\rm supp}(\vect{v}) \subseteq [M]^D$ represent the set of index vectors whose corresponding $v_{\vect{n}}$ entries are nonzero. We introduce the following notation in order to help explain our algorithm in the subsequent sections of the paper. 

For a given $\vect{v}\in \mathbb{C}^{[M]^D}$, $j \in [D]$, and $\widetilde{n} \in [M]$ the vector $\vect{v}_{j;\widetilde{n}} \in \mathbb{C}^{[M]^{D-1}}$ indexed by $\vect{k} \in [M]^{D-1}$ is defined by
\begin{align}
\left({v}_{j;\widetilde{n}}\right)_{\vect{k}}= 
   \begin{cases}
       v_{\vect{n}}, &\text{ if } \vect{n} = (k_0, \dots, k_{j-1},\widetilde{n},k_j,\dots, k_{D-2}) \\
       0 &\text{ otherwise } \\
   \end{cases}. \label{def:vecentryfixed}
\end{align}
Note that $\vect{v}_{j;\widetilde{n}}$ will only ever have at  most
\begin{equation}
N' := {D-1 \choose d - \| \widetilde{n} \|_0}M^{\min \left\{ d - \| \widetilde{n} \|_0, D-1 \right\}} \leq \left(\frac{e(D-1)M}{\max \{ d - 1, 1 \} } \right)^d
\label{equ:Dimvec1}
\end{equation}
nonzero entries if $v_{\vect{n}} = 0$ for all $\vect{n} \in [M]^D$ with $\| \vect{n} \|_0 > d$ by assumption.  Here, as throughout the remainder of the paper, we define ${p \choose q }$ to be $1$ whenever  $q \geq p$ or $q < 0$ (also recall the definition of $ \| \widetilde{n} \|_0$ from \eqref{equ:l0asIndicator} above).\footnote{The $\min \left\{ d - \| \widetilde{n} \|_0, D-1 \right\}$ in the exponent of the $M$ in \eqref{equ:Dimvec1} handles the case when $d = D$ and $\widetilde{n} = 0$.}
Similarly, for a given $\vect{v}\in \mathbb{C}^{[M]^D}$, $j \in [D]$, and $\vect{\widetilde{n}} \in [M]^{j+1}$ the vector $\vect{v}_{j;(\vect{\widetilde{n}},\cdots)} \in \mathbb{C}^{[M]^{D-j-1}}$ indexed by $\vect{k} \in [M]^{D-j-1}$ is defined by 
\begin{align}
\left({v}_{j;(\vect{\widetilde{n}},\cdots)}\right)_{\vect{k}}=
   \begin{cases}
       v_{\vect{n}},&\text{ if }\vect{n} = (\vect{\widetilde{n}},\vect{k}) \\
       0 &\text{ otherwise}\\
   \end{cases}. \label{def:vecprefixfixed}
\end{align}
The following lemma bounds the total number of nonzero entries that $\vect{v}_{j;(\vect{\widetilde{n}},\cdots)}$ can have given that $v_{\vect{n}} = 0$ whenever $\| \vect{n} \|_0 > d$.  Note that for $j=0$, $\vect{v}_{j;\widetilde{n}}=\vect{v}_{j;({\widetilde{n}},\cdots)}$ so that \eqref{equ:Dimvec1} follows as a special case.\\

\begin{lemma}
Let $\vect{v}\in \mathbb{C}^{[M]^D}$, $j \in [D]$, and $\vect{\widetilde{n}} \in [M]^{j+1}$ with $\| \vect{\widetilde{n}} \|_0 \leq d$.  Suppose that $v_{\vect{n}} = 0$ whenever $\| \vect{n} \|_0 > d$.  Then ${v}_{j;(\vect{\widetilde{n}},\cdots)}$ can have at most
\begin{equation}
\widetilde{N}_j :=  {D-j-1 \choose d - \| \vect{\widetilde{n}} \|_0}M^{\min \left\{ d - \| \vect{\widetilde{n}} \|_0, D-j-1 \right\}} \leq \left(\frac{e(D-j-1)M}{\max \{ d -j- 1, 1 \} } \right)^d
\label{lem:estNj}
\end{equation}
nonzero entries.

\end{lemma}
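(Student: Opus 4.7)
The lemma is a purely combinatorial counting statement, so my plan is to unravel the definition of $\vect{v}_{j;(\vect{\widetilde{n}},\cdots)}$, apply the global sparsity hypothesis on $\vect{v}$ to constrain which indices can contribute, and then count the feasible indices directly.

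First, from \eqref{def:vecprefixfixed}, the nonzero entries of $\vect{v}_{j;(\vect{\widetilde{n}},\cdots)}$ are indexed by those $\vect{k}\in[M]^{D-j-1}$ for which $v_{(\vect{\widetilde{n}},\vect{k})}\neq 0$. By the assumption that $v_{\vect{n}}=0$ whenever $\|\vect{n}\|_0 > d$, any such $\vect{k}$ must satisfy
\[
\|(\vect{\widetilde{n}},\vect{k})\|_0 \;=\; \|\vect{\widetilde{n}}\|_0 + \|\vect{k}\|_0 \;\leq\; d,
\]
so that $\|\vect{k}\|_0 \leq \ell$, where I set $\ell := d - \|\vect{\widetilde{n}}\|_0 \geq 0$. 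Thus the quantity to bound is the cardinality of $A := \{\vect{k}\in[M]^{D-j-1} : \|\vect{k}\|_0 \leq \ell\}$, with $q:=D-j-1$.

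Second, I will bound $|A|$ by an overcounting argument. If $\ell \geq q$ the constraint is vacuous and $|A|\leq M^q$. Otherwise, to each $\vect{k}\in A$ associate any size-$\ell$ superset $S\supseteq\mathrm{supp}(\vect{k})$ of $[q]$; then $\vect{k}$ is completely determined by the pair $(S,\vect{k}|_S)\in\binom{[q]}{\ell}\times[M]^\ell$, giving $|A|\leq \binom{q}{\ell}M^\ell$. Using the paper's convention that $\binom{p}{q}=1$ when $q\geq p$, both cases are captured by the single formula $\binom{D-j-1}{\ell}M^{\min\{\ell,D-j-1\}}$, which is exactly $\widetilde{N}_j$.

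Third, for the closed-form upper bound, I will invoke the standard estimate $\binom{q}{\ell}\leq (eq/\ell)^\ell$, yielding $|A|\leq (eqM/\ell)^\ell$ for $\ell\geq 1$. Since $\vect{\widetilde{n}}\in[M]^{j+1}$ forces $\|\vect{\widetilde{n}}\|_0\leq j+1$, we have $\ell \geq d-j-1$ and $\ell\leq d$. In the generic regime where $eqM/\ell\geq 1$, monotonicity in the exponent gives $(eqM/\ell)^\ell \leq (eqM/\ell)^d \leq (eqM/\max\{d-j-1,1\})^d$, which is the stated bound. The degenerate cases ($\ell=0$, giving the single index $\vect{k}=\vect{0}$; or $eqM/\ell < 1$, only possible for trivially small $q$ or $M$) are verified by inspection. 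I expect the only real bookkeeping hazard to be the exponent upgrade from $\ell$ to $d$, which silently requires the base to be at least one and the floor $\max\{d-j-1,1\}$ to absorb the possibility that $d-j-1\leq 0$; neither is an essential obstacle, but both must be stated for the bound to be rigorous in every corner case.
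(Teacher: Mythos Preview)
Your proposal is correct and follows essentially the same counting argument as the paper: both reduce to bounding $|\{\vect{k}\in[M]^{D-j-1}:\|\vect{k}\|_0\le d-\|\vect{\widetilde{n}}\|_0\}|$ by choosing the at most $\ell=d-\|\vect{\widetilde{n}}\|_0$ nonzero positions and filling each with one of $M$ values, splitting into the two cases $\ell<D-j-1$ and $\ell\ge D-j-1$. You actually go a bit further than the paper, which stops after establishing the formula for $\widetilde{N}_j$ and never argues the rightmost closed-form inequality; your sketch of that step via $\binom{q}{\ell}\le(eq/\ell)^\ell$ and the exponent upgrade (with the corner cases you flag) is the natural way to finish it.
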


\begin{proof}
Since $v_{\vect{n}} = 0$ whenever $\| \vect{n} \|_0 > d$ it must be the case that $\left({v}_{j;(\vect{\widetilde{n}},\cdots)}\right)_{\vect{n}} = 0$ whenever $\vect{n} = (\vect{\widetilde{n}}, \vect{\widetilde{n}}')$ has $\| \vect{\widetilde{n}}' \|_0 > d - \| \vect{\widetilde{n}} \|_0$, where $\vect{\widetilde{n}}' \in \mathbb{C}^{D-j-1}$.  As a result, if $D-j-1 > d - \| \vect{\widetilde{n}} \|_0$ then there are at most ${D-j-1 \choose d - \| \vect{\widetilde{n}} \|_0}$ entry combinations left in $\vect{n}$ which can be nonzero, each of which can take on $M$ different values.  If, on the other hand, $d - \| \vect{\widetilde{n}} \|_0 \geq D-j-1$ then all of the remaining $D-j-1$ values of $\vect{n}$ can each take on $M$ different values.  
\end{proof}

Motivated by the definition of $\vect{v}_{j;\widetilde{n}}$ in \eqref{def:vecentryfixed} we further define 
$$\mathcal{I}_{j;\widetilde{n}}:= \left\{\vect{n}\in \mathcal{I}~\big|~ n_j=\widetilde{n} \right\} \subset [M]^D,$$ 
and denote the restriction matrix 
that projects vectors in $\mathbb{C}^{[M]^D}$ onto each $\mathcal{I}_{j;\widetilde{n}}$ (considered as a subset of $\mathbb{C}^{[M]^{D-1}}$) by $P_{j;\widetilde{n}} \in \{ 0,1 \}^{M^{D-1} \times M^D}$. 
That is, we consider each $P_{j;\widetilde{n}}$ matrix to have rows indexed by $\vect{l} \in [M]^{D-1}$, columns indexed by $\vect{k} \in [M]^D$, and entries defined by 
\begin{equation}
(P_{j;\widetilde{n}})_{\vect{l}, \vect{k}} := \begin{cases} 1 &\textrm{ if } \vect{k} = (l_0, \dots, l_{j-1},\widetilde{n},l_j, \dots, l_{D-2})\\
0 &\textrm{ otherwise }
\end{cases}.
\label{equ:ProjMatrix}
\end{equation}
As a result, we have that $P_{j;\widetilde{n}}\vect{v}=\vect{v}_{j;\widetilde{n}}$ for all $\vect{v} \in \mathbb{C}^{[M]^D}$. 

The fast support identification strategy we will employ in this paper will effectively boil down to rapidly approximating the norms of various $\vect{c}_{j;\widetilde{n}}$ and $\vect{c}_{j;(\vect{\widetilde{n}},\cdots)}$ vectors for carefully chosen collections of $\widetilde{n} \in [M]$ and $\vect{\widetilde{n}} \in [M]^{j+1}$.  This, in turn, will be done using as few evaluations of $f$ in \eqref{def:f} as possible in order to estimate inner products and norms of other proxy functions constructed from $f$.  As a simple example, note that $\| \vect{c} \|_2$ can be estimated by using samples from $f$ in order to approximate $\| f \| ^2_{L^2(\mathcal{D},\vect{\nu})}$ since 
$$\| f \| ^2_{L^2(\mathcal{D},\vect{\nu})}=\sum_{\vect{n}\in \mathcal{I} \subseteq [M]^D} |c_{\vect{n}}|^2.$$ 

A bit less trivially, for $j \in [D]$ and $\widetilde{n} \in [M]$ one can also define the function $\left\langle f, T_{j;\widetilde{n}} \right\rangle_{(\mathcal{D}_j,\nu_j)}:  \mathcal{D}'_j \rightarrow \mathbb{C}$ with domain $\mathcal{D}'_j:= \times_{k \neq j} \mathcal{D}_k$ by having $\left\langle f, T_{j;\widetilde{n}} \right\rangle_{(\mathcal{D}_j,\nu_j)} \left( \vect{w} \right)$ evaluate to
$$\int_{\mathcal{D}_j} f(w_0, \dots, w_{j-1}, z ,w_{j+1}, \dots, w_{D-2} ) \overline{T_{j;\widetilde{n}}(z)} ~d\nu_j(z)$$
for all $\vect{w} \in \mathcal{D}'_j$.
Let $\vect{\nu}'_j:=\otimes_{k \neq j} \nu_k$.  It is not too difficult to see that 
\begin{equation}
\left \| \langle f, T_{j;\widetilde{n}} \rangle_{(\mathcal{D}_j, \nu_j)} \right \|^2_{L^2(\mathcal{D}'_j,\vect{\nu}'_j)} = \sum_{\vect{n} \in \mathcal{I} \text{ s.t. } n_j=\widetilde{n}} |c_{\vect{n}}|^2 = \| \vect{c}_{j;\widetilde{n}} \|_2^2
\label{equ:EngEstEntryID}
\end{equation}
in this case.  Similarly, for some $j \in [D]$ and $\vect{\widetilde{n}} \in [M]^{j+1}$ one can define the function $\langle f, {T}_{j;\vect{\widetilde{n}} }\rangle_{\left( \times_{i\in[j+1]} \mathcal{D}_i,  \otimes_{i\in[j+1]} \nu_i  \right)}$ from $\mathcal{D}''_j := \times_{k > j} \mathcal{D}_k$ into $\mathbb{C}$ by letting $\langle f, T_{j;\vect{\widetilde{n}}} \rangle_{\left( \times_{i\in[j+1]} \mathcal{D}_i,  \otimes_{i\in[j+1]} \nu_i  \right)} (\vect{w})$ equal
\begin{equation*}
 \int_{\times_{i\in[j+1]} \mathcal{D}_i} f(\vect{z} ,w_{0}, \dots, w_{D-j-2} )\prod_{k \in [j+1]}  \overline{T_{k;\widetilde{n}_k}(z_k)} ~d\left(\otimes_{i\in[j+1]} \nu_i \right)(\vect{z})
\end{equation*}
for all $\vect{w} \in \mathcal{D}''_j$.  Let $\vect{\nu}''_j:=\otimes_{k > j} \nu_k$.  Analogously to the situation above we then have that
\begin{equation}
\left\|  \langle f, {T}_{j;\vect{\widetilde{n}} }\rangle_{L^2\left( \times_{i\in[j+1]} \mathcal{D}_i,  \otimes_{i\in[j+1]} \nu_i  \right)} \right\|^2_{L^2 \left( \mathcal{D}''_j, \vect{\nu}''_j \right)} ~=~ \| \vect{c}_{j;(\vect{\widetilde{n}},\cdots)} \|_2^2.
\label{equ:EngEstPairing}
\end{equation}
As we shall see below, both \eqref{equ:EngEstEntryID} and \eqref{equ:EngEstPairing} will be implicitly utilized in order to allow the estimation of such $\| \vect{c}_{j;\widetilde{n}} \|_2^2$ and $\| \vect{c}_{j;(\vect{\widetilde{n}},\cdots)} \|_2^2$ norms, respectively, using just a few nonadaptive samples from $f$.

\subsection{The Proposed Method as a Sublinear-Time Compressive Sensing Algorithm}
\label{sec:SublinCompSense}

Note that $\vect{c} \in \mathbb{C}^{[M]^D}$ from \eqref{def:f}, as well as its restrictions $\vect{c}_{j;\vect{\widetilde{n}}} \in \mathbb{C}^{[M]^{D-1}}$ and $\vect{c}_{j;(\vect{\widetilde{n}},\cdots)} \in \mathbb{C}^{[M]^{D-j-1}}$ for any $\vect{\widetilde{n}} \in [M]^{j+1}$, will all be at most $s$-sparse under the assumption that $\epsilon=0$.  As mentioned above, this means that recovering $f$ from a few function evaluations is essentially equivalent to recovering $\vect{c}$ using random sampling matrices.  Given this, the method we propose in the next section can also be viewed as a sublinear-time compressive sensing algorithm which uses a highly structured measurement matrix $A$ consisting of several concatenated random sampling matrices.  More explicitly, the measurements $\vect{y} \in \mathbb{C}^{m'_1 + m'_2 + m'_3}$ utilized by Algorithm~\ref{alg:main} below consist of function evaluations (i.e., recall \eqref{Def:SamplesFromf}) which can be represented in the concatenated form
\begin{equation}
\vect{y} = \begin{bmatrix} \vect{y^{\rm E}} \\ \vect{y^{\rm I}} \\ \vect{y^{\rm P}} \end{bmatrix} 
~=~ \begin{bmatrix} \Phi \vect{c} \\ A^I \vect{c} \\ A^P \vect{c} \end{bmatrix} ~=~  \begin{bmatrix} \Phi \\ A^I \\ A^P \end{bmatrix} \vect{c} ~=~ A\vect{c}
\label{equ:sublinCS}
\end{equation}
for subvectors $\vect{y^{\rm E}} \in \mathbb{C}^{m'_1}$, $\vect{y^{\rm I}} \in \mathbb{C}^{m'_2}$, $\vect{y^{\rm P}} \in \mathbb{C}^{m'_3}$ (recall \S\ref{sec:UniversalGrids}, and see below for technical details), and structured sampling matrices $\Phi \in \mathbb{C}^{m'_1 \times M^D}$, $A^I \in \mathbb{C}^{m'_2 \times M^D}$, and $A^P \in \mathbb{C}^{m'_3 \times M^D}$.

In \eqref{equ:sublinCS} the matrix $\Phi$ is a standard random sampling matrix with the RIP formed as per \eqref{def:A}.  It and its associated samples $\vect{y^{\rm E}} = \Phi \vect{c}$ are used to estimate the entries of $\vect{c}$ indexed by the index vectors contained in the identified energetic support set $T$ in line 13 of Algorithm~\ref{alg:main}.  The matrices $A^I$ and $A^P$ are both used for support identification.

In particular, the matrix $A^I$ is a random sampling matrix corresponding to the sampling set $\mathcal{G}^I$, which is constructed as follows.
	Let $\mathcal{U}_j := \left\{ u_{j,0}, \dots, u_{j,\mathcal{L}'_j-1} \right\} \subset \mathcal{D}_j$ be the set of $\mathcal{L}'_j$ points at which one can evaluate any given $\mathcal{B}_j$-sparse function $g: \mathcal{D}_j \rightarrow \mathbb{C}$ in the span of $\mathcal{B}_j$ in order to compute all $M$-inner products $\left\{ \langle g, T_{j;\widetilde{n}} \rangle \right\}_{\widetilde{n} \in [M]}$ in $\mathcal{O}(\mathcal{L})$-time.  Also, let $I_{\geq j}:  [D-1] \rightarrow \{0,1\}$ be the indicator function that is zero when $\kappa < j$, and one when $\kappa \geq j$.
	For each $j \in [D]$ we will then define $\mathcal{G}_j^I \subset \mathcal{D}$ to be the set of $m \mathcal{L}'_j$ randomly generated grid points given by 
	$$\vect{x}'_{j,\ell,k} = \left( (x_{j,\ell})_0, (x_{j,\ell})_1, \dots, (x_{j,\ell})_{j-1}, u_{j,k}, (x_{j,\ell})_{j}, \dots, (x_{j,\ell})_{D-2} \right) ~\forall (\ell,k) \in [m]\times[\mathcal{L}'_j],$$
	where each $(x_{j,\ell})_\kappa \in \mathcal{D}_{\kappa+I_{\geq j}(\kappa)}$ is an independent realization of a random variable $\sim \nu_{\kappa + I_{\geq j}(\kappa)}$ for all $\kappa \in [D-1]$.  We now take $\mathcal{G}^I$ to be the union of these sets so that
	$$\mathcal{G}^I := \bigcup_{j \in [D]} \mathcal{G}_j^I = \bigcup_{j \in [D]} \left\{ \vect{x}'_{j,\ell,k} \right\}_{(\ell,k) \in [m]\times[\mathcal{L}'_j]}.$$
	Finally, similar to \eqref{equ:DefyE}, we will also define $f$'s evaluations on $\mathcal{G}^I$ to be $\vect{y^{\rm I}} \in \mathbb{C}^{m'_2}$ where
	$$\vect{y^{\rm I}} =  f\left( \mathcal{G}^I \right) := \left( f\left(\vect{x}'_{0,0,0} \right), f\left(\vect{x}'_{0,0,1}\right),\dots,f\left(\vect{x}'_{D-1,m-1,\mathcal{L}'_{D-1}-1}\right)\right)^T.$$
	The measurements $\vect{y^{\rm I}}$ are
	 used to try to identify all of the energetic basis functions in each input dimension, i.e., the sets 
	\begin{equation}
	\mathcal{N}'_j := \left\{ \widetilde{n} \in [M] ~\big|~ \exists \vect{n} \in \mathcal{S} ~{\rm with}~ n_j =  \widetilde{n} \right\} \subseteq [M]
	\label{equ:TrueNjsupport}
	\end{equation}
	for each $j \in [D]$ (see Algorithm~\ref{alg:Entry}).

 The matrix $A^P$ is a random sampling matrix corresponding to the sampling set $\mathcal{G}^P$, whose precise definition will again be based on several different subsets for each $j \in [D] \setminus \{ 0 \}$.  For each fixed $(j,\ell,k) \in [D] \setminus \{ 0 \} \times [m_1] \times [m_2]$ let $\vect{w}_{j,\ell} \in W_j:=\times_{i \in [j+1]}\mathcal{D}_i$ and $\vect{z}_{j,k} \in Z_j:=\times^{D-1}_{i = j+1}\mathcal{D}_i$ be chosen independently at random according to $\otimes_{i\in[j+1]} \nu_i$ and $\otimes^{D-1}_{i = j+1} \nu_i$, respectively.\footnote{When $j = D-1$ the vector $\vect{z}_{j,k}$ is interpreted as a null vector satisfying $(\vect{w}_{j,\ell},\vect{z}_{j,k}) = \vect{w}_{j,\ell}$ $\forall(\ell, k)$.}  We then define $\mathcal{G}_j^P \subset \mathcal{D}$ to be the set of $m_1 m_2$ randomly generated grid points given by 
 	$$\mathcal{G}_j^P :=  \left\{ (\vect{w}_{j,\ell},\vect{z}_{j,k})~\big|~ (\ell,k) \in [m_1] \times [m_2] \right\} ~\forall j \in [D] \setminus \{ 0 \}.$$
 	As above, we now let $\mathcal{G}^P$ be the union of these sets so that
 	$$\mathcal{G}^P := \bigcup_{j \in [D] \setminus \{ 0 \}} \mathcal{G}_j^P$$
 	and consider $f$'s evaluations on $\mathcal{G}^P$, denoted by $\vect{y^{\rm P}} \in \mathbb{C}^{m'_3}$.  The resulting measurements
 	\[\vect{y^{\rm P}} =  f\left( \mathcal{G}^P \right) := \left( f\left(\vect{w}_{1,0},\vect{z}_{1,0} \right), f\left(\vect{w}_{1,0},\vect{z}_{1,1}\right),\dots,f\left(\vect{w}_{D-1,m_1-1},\vect{z}_{D-1,m_2-1}\right)\right)^T\] and its associated samples $\vect{y^{\rm P}} = A^P \vect{c}$ are then used in Algorithm~\ref{alg:Pairing} to help build up the estimated support set $T \subset [M]^D$ from the previously identified $\mathcal{N}'_j$-sets.  See \S\ref{sec:proposed} below for additional details.

The matrix $A^I$ above is built using the matrix Kronecker products $\widetilde{A}_{j} \otimes L_{j}$ for $j \in [D]$, where $\widetilde{A}_{j} \in \mathbb{C}^{ m \times [M]^{D-1} }$ is the random sampling matrix defined in \eqref{eqn:defA}, and $L_{j} \in \mathbb{C}^{\mathcal{L}'_j \times [M]}$ is a sampling matrix associated with $\mathcal{U}_j \subset \mathcal{D}_j$ from Section \ref{sec:UniversalGrids} defined as
\begin{equation}
\left(L_{j}\right)_{q,n}:=T_{j;n}(u_{j,q}), \qquad q\in[\mathcal{L}'_j] \text{ and } n\in[M].
\label{defL}
\end{equation}
The matrix $A^P$, on the other hand, is constructed using $B_j \otimes C_j$ for all $j \in [D] \setminus \{ 0 \}$ where each ${B}_j \in \mathbb{C}^{m_1 \times [M]^{j+1}}$ is the random sampling matrix defined below in \eqref{eqn:defB}, and each ${C}_j \in \mathbb{C}^{m_2 \times [M]^{D-j-1}}$ the random sampling matrix defined in \eqref{eqn:defC}.
In particular, we have that
\begin{equation}
A^I := \begin{bmatrix} \widetilde{A}_{0} \otimes L_{0} \\ \vdots \\ \widetilde{A}_{D-1} \otimes L_{D-1} \end{bmatrix}, ~{\rm and}~
A^P :=  \begin{bmatrix} B_{1} \otimes C_{1} \\ \vdots \\  B_{D-1} \otimes C_{D-1}  \end{bmatrix}.
\label{defSamMat}
\end{equation}
From a set of random Gaussian weights $\mathcal{W} = \left\{ g_{\ell}^k \right\}_{\ell \in [m], k \in [L]}$, a matrix $G\in \mathbb{C}^{L\times m}$ is defined as
\begin{equation}
\left(G\right)_{k,\ell}:=g^k_{\ell}, \qquad k\in[L]  \text{ and } \ell\in[m],
\label{defG}
\end{equation}
where $g^k_{\ell}$'s are the Gaussian weights from (\ref{def:fk}).

Briefly contrasting the proposed approach interpreted as a sublinear-time compressive sensing method via \eqref{equ:sublinCS} against previously existing sublinear-time algorithms for Compressive Sensing (CS) (see, e.g., \cite{gilbert2006sublinear,Gilbert:2007:OSF:1250790.1250824,doi:10.1137/100816705,iwen2014compressed,Gilbert:2017:FSR:3058789.3039872}), we note that no previous sublinear-time CS methods exist which utilize measurement matrices solely derived from general BOS random sampling matrices.  This means that the associated recovery algorithms developed herein can not directly take advantage of the standard group testing, hashing, and error correcting code-based techniques which have been regularly employed by such methods, making the development of fast reconstruction techniques and their subsequent analysis quite challenging.  Nonetheless, we will see that we can still utilize at least some of the core ideas of these methods by sublinearizing the runtime of one of their well known superlinear-time relatives, CoSaMP \cite{needell2009cosamp}.

\section{The Proposed Method} 
\label{sec:proposed}
\setcounter{equation}{0}

\begin{algorithm}[h]
\caption{Sublinearized CoSaMP}
\label{alg:main}
\begin{algorithmic}[1]
\Procedure{$\mathbf{Sublinear Recovery Algorithm}$}{}\\
{\textbf{Input: }}{Sampling matrices $\Phi,~ A^I,~A^P$ (implicitly, via the samples in $\mathcal{G}$ that determine their rows), samples $\vect{y^{\rm E}}=\Phi\vect{c}, ~ \vect{y^{\rm I}} = A^I \vect{c},~\vect{y^{\rm P}} = A^P \vect{c}$, a sparsity estimate $s$, and a set of i.i.d. Gaussian weights $\mathcal{W}$}\\
{\textbf{Output: }}{$s$-sparse approximation $\vect{a}$ of $\vect{c}$}
\State $\vect{a}^0=\vect{0}$ \hfill \{Initial approximation\}
\State $\vect{v}^{I} \gets \vect{y^{\rm I}}$, $\vect{v}^{P} \gets \vect{y^{\rm P}}$ 
\State $t\gets 0$
\Repeat
\State \{The next line calls Algorithm~\ref{alg:Entry} \dots \}
\State $\mathcal{N}_j ~\forall j \in [D] \gets$ {\bf EntryIdentification}($\vect{v}^{I}$, $\mathcal{W}$) \hfill \{Support identification step \# 1\}
\State \{The next line calls Algorithm~\ref{alg:Pairing} \dots\}
\State $\Omega \gets$ {\bf Pairing}($\vect{v}^{P}$, $\mathcal{N}_j ~\forall j \in [D]$) \hfill \{Support identification step \# 2\}
\State $T\gets \Omega\cup {\rm supp}(\vect{a}^{t})$ \hfill \{Merge supports\}
\State $\vect{b}_{T} \gets \Phi^{\dagger}_{T} \vect{y^{\rm E}}$ \hfill \{Local estimation by least-squares\}
\State $t\gets t+1$
\State $\vect{a}^t\gets \left(\vect{b}_{T} \right)_s$ \hfill \{Prune to obtain next approximation\}
\State $\vect{v}^{I}\gets \vect{y^{\rm I}} - A^I\vect{a}^{t}$, $\vect{v}^{P}\gets \vect{y^{\rm P}} - A^P\vect{a}^{t}$ \hfill \{Update current samples\} 

\Until{halting criterion true}
\EndProcedure
\end{algorithmic}
\end{algorithm}

In this section we introduce and discuss our proposed method. Roughly speaking, our algorithm can be considered as a greedy pursuit algorithm (see, e.g., \cite{daubechies2004iterative, foucart2011hard, needell2009cosamp, needell2010signal, zhang2011sparse}) with a faster support identification technique that takes advantage of the structure of BOS product bases. In particular, we will focus on the CoSaMP algorithm \cite{needell2009cosamp} herein.  Note that support identification is the most computationally expensive step of the CoSaMP algorithm.  Otherwise, CoSaMP is already a sublinear-time method for any type of BOS basis one likes.  Our overall strategy, therefore, will be to hijack the CoSaMP algorithm as well as its analysis by removing its superlinear-time support identification procedure and replacing it with a new sublinear-time version that still satisfies the same iteration invariant as the original.  See Algorithm \ref{alg:main} for pseudocode of our modified CoSaMP method.  Note that most its steps are identical to the original CoSaMP algorithm except for the two ``Support identification" steps, and the ``Update current samples" and ``halting criterion" lines.  Thus, our discussion will mainly focus on these three parts.  

Like CoSaMP, Algorithm~\ref{alg:main} is a greedy approximation technique which makes locally optimal choices during each iteration. In the $t$-th iteration, it starts with an $s$-sparse approximation $\vect{a}^{t}$ of $\vect{c}$ and then tries to approximate the at most $2s$-sparse residual vector $\vect{r}:=\vect{c}-\vect{a}^{t}$.  The two ``Support identification" steps begin approximating $\vect{r}$ by finding a support set $\Omega \subset \mathcal{I}$ of cardinality at most $2s$ which contains the set $\left \{\vect{n} ~\big|~ |{r}_{\vect{n}}|^2 \geq \frac{\|\vect{r}\|_2^2}{\alpha^2 s} \right \}$ (i.e., $\Omega$ contains the indices of the entries where most of the energy of $\vect{r}$ is located).  These support identification steps constitute the main modification made to CoSaMP in this paper and are discussed in more detail in Sections \ref{secEntryID} and \ref{secPairing} below.  After support identification, in the ``Merge supports" step, a new support set $T$ of cardinality at most $3s$ is then formed from the union of $\Omega$ with the support of the current approximation $\vect{a}^{t}$.  At this stage $T$ should contain the overwhelming majority of the important (i.e., energetic) index vectors in $\mathcal{S}$.  As a result, restricting the columns of the sampling matrix $\Phi$ to those in $T$ (or constructing them on the fly in a low memory setting) in order to solve for  $\vect{b}_{T} := {\rm argmin}_{\vect{u} \in \mathbb{C}^{|T|}} \|\Phi_T\vect{u}-\vect{y^{\rm E}}\|_2$ should yield accurate estimates for the true coefficients of $\vect{c}$ indexed by the elements of $T$, $\vect{c}_T$.\footnote{In practice, it suffices to  approximate the least-squares solution $b_T$ by an iterative least-squares approach such as Richardson's iteration or conjugate gradient \cite{Dahlquist:2008:NMS:1383510} since computing the exact least squares solution can be expensive when $s$ is large.  The argument of \citep{needell2009cosamp} shows that it is enough  to take three iterations for Richardson's iteration or conjugate gradient if the initial condition is set to $\vect{a}^{t}$, and if $\Phi$ has an RIP constant $\delta_{2s}<0.025$. In fact, both of these methods have similar runtime performance.}  The vector $\left(\vect{b}_T\right)_s$ restricting $\vect{b}_T$ to its $s$ largest-magnitude elements then becomes the next approximation of $\vect{c}$, $\vect{a}^{t+1}$. 

As previously mentioned, the main difference between Algorithm~\ref{alg:main} and CoSaMP is in the support identification steps. In the proposed method support identification consists of two parts:  ``Entry Identification" and ``Pairing".  For each of these steps we use a different measurement matrix, $A^I$ or $A^P$, respectively, as well as a different set of samples (either $\vect{v}^{I}$ or $\vect{v}^{P}$) from the current residual vector.  Thus, we need to update a total of three estimates every iteration:  $\vect{v}^{I}$, $\vect{v}^{P}$ and $\vect{a}^t$.  In the next two sections we review each of the two newly proposed support identification steps in more detail.

\subsection{Support Identification Step \# 1:  Entry Identification}
\label{secEntryID}

For each $j\in[D]$ the entry identification algorithm (see Algorithm \ref{alg:Entry}) tries to find the $j$-th entry of each energetic index vector $\vect{n}$ corresponding to a nonzero entry $r_{\vect{n}}$ in the $2s$-sparse residual vector $\vect{r}=\vect{c}-\vect{a}^{t}$. Note that for each $j$ this gives rise to at most $2s$ index entries in $[M]$.\footnote{Note that we are generally assuming herein that $2s < M$.  In the event that $2s \geq M$ one can proceed in at least two different ways.  The first way is to not change anything, and to simply be at peace with the possibility of, e.g., occasionally returning $\mathcal{N}_j = [M]$.  This is our default approach.  The second way is regroup the first $g \in \mathbb{N}$ variables of $f$ together into a new collective ``first variable'', the second $g$ variables together into a new collective ``second variable", etc., for some $g$ satisfying $M^g > 2s$.  After such regrouping the algorithm can then again effectively be run as is with respect to these new collective variables.}  We therefore define $\mathcal{N}^t_j $ to be the resulting set $\{n_j ~|~ \vect{n} \in {\rm supp}(\vect{r}) \}$ of size at most $2s$ for each $j\in [D]$, and note that $\mathcal{N}^0_j = \mathcal{N}'_j$ in \eqref{equ:TrueNjsupport}.  Note further that $\widetilde{n} \in \mathcal{N}^t_j$ if and only if $\| \vect{r}_{j;\widetilde{n}} \|_2 > 0$.  As a result we can learn $\mathcal{N}^t_j$ by approximating $\| \vect{r}_{j;\widetilde{n}} \|_2$ using $\left \| \langle h, T_{j;\widetilde{n}} \rangle_{(\mathcal{D}_j, \nu_j)} \right \|^2_{L^2(\mathcal{D}'_j,\vect{\nu}'_j)}$ via \eqref{equ:EngEstEntryID} as long as we know 
\begin{equation}
h(\vect{x}):=\sum_{\vect{n}\in {\rm supp}(\vect{r})} r_{\vect{n}}T_{\vect{n}}(\vect{x}).
\label{equ:defhres}
\end{equation} 
Whenever $\left \| \langle h, T_{j;\widetilde{n}} \rangle_{(\mathcal{D}_j, \nu_j)} \right \|^2_{L^2(\mathcal{D}'_j,\vect{\nu}'_j)}$ is larger than a threshold value $\tau$ (e.g., zero) for a particular choice of $j \in [D]$ and $\widetilde{n} \in [M]$, we could simply add $\widetilde{n}$ to $\mathcal{N}_j$ (our estimate of $\mathcal{N}^t_j$) in this case.

\begin{algorithm}[H]
\caption{Entry identification}
\label{alg:Entry}
\begin{algorithmic}[1]
	 \Statex \{The method works because ${\rm median}_{k\in[L]} \left| \langle h_{j;k}, T_{j;\widetilde{n}} \rangle_{(\mathcal{D}_j, \nu_j)}\right| \approx \| \vect{r}_{j;\widetilde{n}} \|_2$.  See 
	\Statex \eqref{equ:defhres} and \eqref{def:fk} for the definition of $h_{j;k}$.  For exactly $s$-sparse $\vect{c}$ one can use 
	\Statex $\tau = 0$ below.  More generally, one can select the largest $s$ estimates for $\mathcal{N}_j$.\}
\Procedure{$\mathbf{Entry Identification}$}{}\\
{\textbf{Input: }}{$\vect{v}^I$, and a set of i.i.d. Gaussian weights $\mathcal{W}$ for use in the $h_{j;k}$ below (see \eqref{def:fk})}\\
{\textbf{Output: }}{$\mathcal{N}_j$ for $j\in[D]$}
\For {$j =0 \to D-1$}
   \State $\mathcal{N}_j \leftarrow \emptyset$
   \For {$\widetilde{n} = 0\to M-1$}
       
	\State  {\bf if} \small ${\rm median}_{k\in[L]} \left| \langle h_{j;k}, T_{j;\widetilde{n}} \rangle_{(\mathcal{D}_j, \nu_j)}\right| > \tau$,  
{\bf then}
\State \qquad $\mathcal{N}_j \leftarrow \left\{ \widetilde{n} \right\} \cup \mathcal{N}_j$. 
	\State {\bf end if}
	\EndFor
\EndFor
\EndProcedure
\end{algorithmic}
\end{algorithm}

Of course we don't actually know exactly what $h$ is.  However, we do have access to samples from $h$ in each iteration in the form of $\vect{v}^I$ and $\vect{v}^P$.  And, as a result, we are able to approximate $\left \| \langle h, T_{j;\widetilde{n}} \rangle_{(\mathcal{D}_j, \nu_j)} \right \|^2_{L^2(\mathcal{D}'_j,\vect{\nu}'_j)} = \| \vect{r}_{j;\widetilde{n}} \|_2$ for any $j \in [D]$ and $\widetilde{n} \in [M]$ with the estimator
$${\rm median}_k \left| \langle h_{j;k}, T_{j;\widetilde{n}}\rangle_{(\mathcal{D}_j, \nu_j)} \right|$$ 
defined using \eqref{def:fk}.  In Section \ref{sec:entryId} we show that this estimator can be used to accurately approximate $\| \vect{r}_{j;\widetilde{n}} \|_2$ {\it for all $2s$-sparse residual vectors} $\vect{r} \in \mathbb{C}^{[M]^D}$ using only $\mathcal{O}\left(s \cdot K^2 d \mathcal{L}' \cdot {\rm polylog}(D,s,M,K)\right)$ universal samples from any given $\vect{r}$'s associated $h$-function in \eqref{equ:defhres} (i.e., the samples in $\vect{v}^I$).\footnote{Recall that $\mathcal{L}'$ represents the maximum number of function evaluations one needs in order to compute $\langle g, T_{j;\widetilde{n}} \rangle$ for all $\widetilde{n} \in [M]$ in $\mathcal{O}(\mathcal{L})$-time for any given $j \in [D]$, and $s$-sparse $g: \mathcal{D}_j \rightarrow \mathbb{C}$ in ${\rm span} \left\{ T_{j;m}~\big|~m\in [M] \right\}$.}  Furthermore, the estimator can always be computed in just $\mathcal{O}\left(s^2 \cdot K^2 d^2 \mathcal{L} \cdot {\rm polylog}(D,s,M,K)\right)$-time.

At this point it is important to note that managing to find each $\mathcal{N}^t_j$ exactly for all $j \in [D]$ still does not provide enough information to allow us to learn ${\rm supp}(\vect{r})$ efficiently when $d > 1$.  In general the most we learn from this information is that ${\rm supp}(\vect{r}) \subset \left(\times_{j\in [D]} \mathcal{N}_j \right) \bigcap \mathcal{I} \subset [M]^D$.  In the next ``Pairing" step we address this problem by iteratively pruning the candidates in $\times_{j\in [1]} \mathcal{N}_j, \times_{j\in [2]} \mathcal{N}_j, \dots, \times_{j\in [D]} \mathcal{N}_j$ down at each stage to the best $2s$ candidates for being a prefix of some element in ${\rm supp}(\vect{r})$.  As we shall see, the pruning in each ``Pairing" stage involves energy estimates that are computed using only the samples from $h$ in $\vect{v}^P$.  These ideas are discussed in greater detail in the next section.

\subsection{Support Identification Step \# 2:  Pairing}
\label{secPairing}

\begin{algorithm}[H]
\caption{Pairing}
\label{alg:Pairing}
\begin{algorithmic}[1]
\Procedure{$\mathbf{Pairing}$}{}\\
{\textbf{Input: }}{$\vect{v}^P = \left\{ h(\vect{w}_{j,\ell},\vect{z}_{j,k})  ~\big|~ j \in [D] \setminus \{0\}, \ell\in[m_1], k\in[m_2] \right\} $, $\mathcal{N}_j$ for $j\in[D]$}\\
{\textbf{Output: }}{$\mathcal{P}$}
\State $\mathcal{P}_0 \gets \mathcal{N}_0$
\For {$j =1 \to D-1$}
	\State \{This method works because $E_{j;(\vect{\widetilde{n}}\dots)} \approx \| \vect{r}_{j;(\vect{\widetilde{n}},\cdots)} \|_2^2$ below.\}
	\State $E_{j;(\vect{\widetilde{n}}\dots)} \gets \frac{1}{m_2}\sum_{k\in[m_2]} \left| \frac{1}{m_1}\sum_{\ell\in[m_1]} h(\vect{w}_{j,\ell},\vect{z}_{j,k}) \overline{{T}_{\vect{\widetilde{n}}}(\vect{w}_{j,\ell})}\right|^2$ $\forall \vect{\widetilde{n}}\in \mathcal{P}_{j-1} \times \mathcal{N}_j$.
	\State Create $\mathcal{P}_{j}$ containing each $\vect{\widetilde{n}}$ whose energy estimate $E_{j;(\vect{\widetilde{n}}\dots)}$ is in the $2s$-largest.
\EndFor
\State $\mathcal{P} \gets \mathcal{P}_{D-1}$
\EndProcedure
\end{algorithmic}
\end{algorithm}

Once all the $\mathcal{N}_j \subset [M]$ have been identified for all $j \in [D]$ it remains to match them together in order learn as many of the true length-$D$ index vectors in ${\rm supp}(\vect{r}) \subset [M]^D$ as possible.  To achieve this we begin by attempting to identify all the prefixes of length two, $\vect{\widetilde{n}} = (\widetilde{n}_0,\widetilde{n}_1) \in \mathcal{N}_0 \times \mathcal{N}_1$, which begin at least one element in the support of $\vect{r}$.  Similar to the ideas utilized above, we now note that $(\vect{\widetilde{n}},\vect{n}') \in {\rm supp}(\vect{r})$ for some $\vect{n}' \in [M]^{D-2}$ if and only if $\| \vect{r}_{j;(\vect{\widetilde{n}},\cdots)} \|_2^2 > 0$.  As a result, it suffices for us to use the samples from $h$ (recall \eqref{equ:defhres}) in $\vect{v}^P$ in order to compute $E_{j;(\vect{\widetilde{n}}\dots)} \approx \| \vect{r}_{j;(\vect{\widetilde{n}},\cdots)} \|_2^2$ in Algorithm~\ref{alg:Pairing} above.  The $2s$-largest estimates $E_{j;(\vect{\widetilde{n}}\dots)}$ are then used to identify all the prefixes of length $2$ which begin at least one element of ${\rm supp}(\vect{r})$.  Of course, this same idea can then be used again to find all length-3 prefixes of elements in ${\rm supp}(\vect{r})$ by extending the previously identified length-2 prefixes in all possible $\mathcal{O}(s^2)$ combinations with the elements in $\mathcal{N}_2$, and then testing the resulting length-3 prefixes' energies in order to identify the $2s$ most energetic such combinations, etc..  See Algorithm~\ref{alg:Pairing} above for pseudocode, \S\ref{sec:pairing} for analysis of these $E_{j;(\vect{\widetilde{n}}\dots)}$ estimators, and \S\ref{sec:paringExample} just below for a concrete example of the pairing process.

\begin{figure}[t!]
	\begin{center}
	\resizebox{4in}{!}{
\begin{tikzpicture}[scale=0.50]

\node at (-12.5,3) {$(0,\cdot,\cdot)$};
\node at (-12.5,2) {$(1,\cdot,\cdot)$};
\node at (-12.5,1) {$\vdots$};
\node at (-12.5,-0.5) {$(M-1,\cdot,\cdot)$};

\node at (-4.5,-2) {$ \mathcal{N}_0 \times [M] \times [M]$};
\node at (-4.5,-23) {$[M] \times [M] \times \mathcal{N}_2$};

\node at (-8.5,1.5) {Alg 2.};
\node at (-8.5,.9) {$\longrightarrow$};

\node at (-12.5,-2) {$[M] \times [M] \times [M]$};

\node at (-8.5,-2) {$\supset$};
\node at (-4.5,2.5) {$(3,\cdot,\cdot)$};
\node at (-4.5,1.5) {$(4,\cdot,\cdot)$};
\node at (-4.5,0.5) {$(11,\cdot,\cdot)$};

\node at (-12.5,-7.5) {$(\cdot,0,\cdot)$};
\node at (-12.5,-8.5) {$(\cdot,1,\cdot)$};
\node at (-12.5,-9.5) {$\vdots$};
\node at (-12.5,-11) {$(\cdot,M-1,\cdot)$};

\node at (-4.5,-8.5) {$(\cdot,5,\cdot)$};
\node at (-4.5,-9.5) {$(\cdot,7,\cdot)$};

\node at (-8.5,-9) {Alg 2.};
\node at (-8.5,-9.6) {$\longrightarrow$};

\node at (-12.5,-12.5) {$[M] \times [M] \times [M]$};

\node at (-4.5,-12.5) {$[M] \times \mathcal{N}_1 \times [M]$};
\node at (-8.5,-12.5) {$\supset$};

\node at (-12.5,-23) {$[M] \times [M] \times [M]$};
\node at (-8.5,-19.5) {Alg 2.};
\node at (-8.5,-20.1) {$\longrightarrow$};

\node at (-8.5,-23) {$\supset$};

\node at (-12.5,-18) {$(\cdot,\cdot,0)$};
\node at (-12.5,-19) {$(\cdot,\cdot,1)$};
\node at (-12.5,-20) {$\vdots$};
\node at (-12.5,-21.5) {$(\cdot,\cdot,M-1)$};

\node at (-4.5,-18.5) {$(\cdot,\cdot,6)$};
\node at (-4.5,-19.5) {$(\cdot,\cdot,8)$};
\node at (-4.5,-20.5) {$(\cdot,\cdot,100)$};

\node at (4.5,-3.5) {$\mathcal{N}_0 \times \mathcal{N}_1 \times [M]$};
\node at (8.5,-3.5) {$\supset$};
\node at (12.5,-3.5) {$\mathcal{P}_1 \times [M]$};

\node at (4.5,-23) {$\mathcal{P}_1 \times \mathcal{N}_2$};

\node at (8.5,0.5) {Alg. 3};
\node at (8.5,-0.1) {$\longrightarrow$};
\node at (8.5,-17.5) {Alg. 3};
\node at (8.5,-18.1) {$\longrightarrow$};

\node at (12.5,-23) {${\rm supp} (\vect{r})$};

\node at (4.5,-13.5) {$(3,5,6)$};
\node at (4.5,-14.5) {$(3,5,8)$};
\node at (4.5,-15.5) {$(3,5,100)$};
\node at (4.5,-16.5) {$(4,7,6)$};
\node at (4.5,-17.5) {$(4,7,8)$};
\node at (4.5,-18.5) {$(4,7,100)$};
\node at (4.5,-19.5) {$(11,5,6)$};
\node at (4.5,-20.5) {$(11,5,8)$};
\node at (4.5,-21.5) {$(11,5,100)$};

\node at (-4.5,-3) {$\left| \mathcal{N}_0 \right| =3$};
\node at (-4.5,-13.5) {$\left| \mathcal{N}_1 \right| =2$};
\node at (-4.5,-24) {$\left| \mathcal{N}_2 \right|=3$};

\node at (4.5,-4.5) {$\left|\mathcal{N}_0  \times \mathcal{N}_1 \right|=6$};
\node at (12.5,-4.5) {$\left| \mathcal{P}_1 \right|= 3$};

\node at (4.5,-24) {$\left|\mathcal{P}_1 \times \mathcal{N}_2 \right|=9$};
\node at (12.5,-24) {$| {\rm supp} (\vect{r}) | = 3$};

\node at (-8.5,-4.5) {(a)};
\node at (-8.5,-15) {(b)};
\node at (-8.5,-25.5) {(c)};
\node at (8.5,-6) {(d)};
\node at (8.5,-25.5) {(e)};

\node at (8.5,-23) {$\supset$};

\node at (12.5,1.5) {$(3,5,\cdot)$};
\node at (12.5,0.5) {$(4,7,\cdot)$};
\node at (12.5,-0.5) {$(11,5,\cdot)$};

\node at (12.5,-16.5) {$(3,5,6)$};
\node at (12.5,-17.5) {$(4,7,8)$};
\node at (12.5,-18.5) {$(11,5,100)$};
\node at (4.5,3) {$(3,5,\cdot)$};

\node at (4.5,2) {$(3,7,\cdot)$};
\node at (4.5,1) {$(4,5,\cdot)$};
\node at (4.5,0) {$(4,7,\cdot)$};
\node at (4.5,-1) {$(11,5,\cdot)$};
\node at (4.5,-2) {$(11,7,\cdot)$};

\draw  (-14.5,-11.5) rectangle (-10.5,-7);
\draw  (-14.5,-1) rectangle (-10.5,3.5);
\draw  (-14.5,-22) rectangle (-10.5,-17.5);
\draw  (-6.5,3.5) rectangle (-2.5,-1);
\draw  (-6.5,-7) rectangle (-2.5,-11.5);
\draw  (-6.5,-17.5) rectangle (-2.5,-22);
\draw  (2.5,-22) rectangle (6.5,-13);

\draw  (2.5,-2.5) rectangle (6.5,3.5);
\draw  (10.5,-22) rectangle (14.5,-13);
\draw  (10.5,-2.5) rectangle (14.5,3.5);
\end{tikzpicture}}
	\end{center}
	\caption{Description of Entry Identification ((a), (b), (c)) and Pairing ((d), (e))}
	\label{fig:suppID}
	
\end{figure}
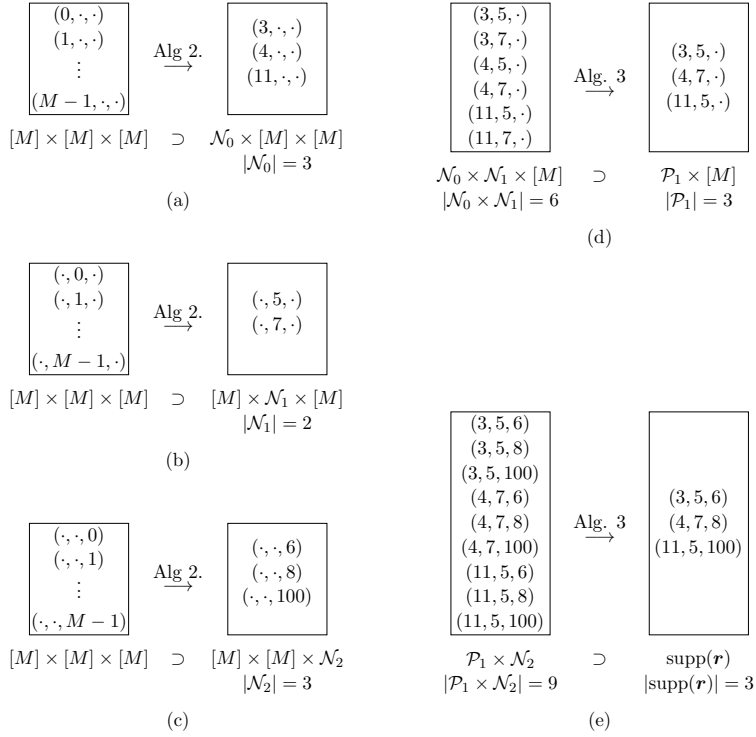

\subsubsection{An Example of Entry Identification and Pairing to Find Support}
\label{sec:paringExample}

Assume that $\vect{r} \in \mathbb{C}^{[M]^3}$ is three-sparse with a priori unknown energetic index vectors 
$${\rm supp}(\vect{r}) = \left\{ (3,5,6),~(4,7,8),~(11,5,100) \right\} \subset [M]^3$$ 
and corresponding nonzero coefficients $r_{(3,5,6)}$, $r_{(4,7,8)}$, and $r_{(11,5,100)}$.  We can further imagine that $M$ here is significantly larger than, e.g., $100$ so that computing all $M^3$ coefficients of $\vect{r}$ using standard numerical methods would be undesirable.  In this case, Algorithm~\ref{alg:Entry} aims to output the sets 
$$\mathcal{N}_0=\{3,4,11\},~\mathcal{N}_1=\{5,7\},~{\rm and }~\mathcal{N}_2=\{100,6,8\} \subset [M],$$  
i.e., the first, second, and third entries of each index vector in the support of $\vect{r}$, respectively. These sets are described in (a), (b) and (c) of Figure \ref{fig:suppID}.
Note that there are $18=3\times 2 \times 3$ possible index vectors which are consistent with the $\mathcal{N}_0,~\mathcal{N}_1,~{\rm and}~\mathcal{N}_2$ above.  Algorithm~\ref{alg:Pairing} is now tasked with finding out which of these $18$ possibilities are truly elements of ${\rm supp}(\vect{r})$ without having to test them all individually.\footnote{In this simple example we can of course simply estimate the energy for all 18 possible index vectors.  The three true index vectors in the support of $\vect{r}$ with nonzero energy would then be discovered and all would be well.  However, this naive approach becomes spectacularly inefficient for larger $D \gg 3$.}  

To identify ${\rm supp}(\vect{r})$ without having to test all $18$ index vectors in $\mathcal{N}_0 \times \mathcal{N}_1 \times \mathcal{N}_2$, the pairing process instead starts by estimating the energy of the $\left| \mathcal{N}_0 \right| \cdot \left| \mathcal{N}_1 \right|=6$ length-$2$ prefixes in $\mathcal{N}_0 \times \mathcal{N}_1$ which might begin an index vector in ${\rm supp}(\vect{r})$.  In the ideal case these energy estimates will reveal that only $3$ of these $6$ possible length-2 prefixes actually have any energy, 
$$\mathcal{P}_1 = \left\{ (3,5), (4,7), (11,5) \right\} \subset [M]^2$$ 
which is illustrated in (d) of Figure \ref{fig:suppID}.
In its next stage visualized in (e) of Figure \ref{fig:suppID}, the pairing process now continues by combining these three length-2 prefixes in $\mathcal{P}_1$ with $\mathcal{N}_2$ in order to produce $\left| \mathcal{P}_1 \right| \times \left| \mathcal{N}_2 \right| = 9$ final candidate elements potentially belonging to ${\rm supp}(\vect{r}) \subset [M]^3$.  Estimating the energy of these $9$ candidates then finally reveals the true identities of the index vectors in the support of $\vect{r}$.  

Note that instead of computing energy estimates for all $18$ possible support candidates in $\mathcal{N}_0 \times \mathcal{N}_1 \times \mathcal{N}_2$, the pairing process allows us to determine the correct support of $\vect{r}$ using only $15 = 6+9 < 18$ total energy estimates in this example.  Though somewhat underwhelming in this particular example, the improvement provided by Algorithm~\ref{alg:Pairing} becomes much more significant as the dimension $D$ of the index vectors grows larger.  When $|{\rm supp}(\vect{r})| = |\mathcal{N}_j| = s$ for all $j \in [D]$, for example, $\times_{j \in [D]} \mathcal{N}_j$ will have $s^D$ total elements.  Nonetheless, Algorithm~\ref{alg:Pairing} will be able to identify ${\rm supp}(\vect{r})$ using only $\mathcal{O}\left(s^2D\right)$ energy estimates in the ideal setting.\footnote{In less optimal settings one should keep in mind that Algorithm~\ref{alg:Pairing} only finds the most energetic entries in general, so that $\mathcal{P}\supset \left \{\vect{n} ~\big|~ |{r}_{\vect{n}}|^2 \geq \frac{\|\vect{r}\|_2^2}{\alpha^2 s} \right \}$ for a given $\alpha>1$.  This is why we need to apply it iteratively.}

\subsubsection{Comparison to Entry Identification and Pairing Methods in Prior Work}
\label{sec:EntryIDandPairingComp}

As previously mentioned, the support identification approach outlined above is similar in nature to the dimension incremental approaches utilized by both \cite{potts2016sparse, kammerer2017high} and \cite{choi2016multi,choi2019multiscale} in the Fourier setting.\footnote{In fact, this is unsurprising given that similar dimension incremental strategies have been proposed as far back as the 1970's in work related to recovering sparse algebraic polynomials \cite{zippel1979probabilistic}.}  For example, Algorithm 1 of \cite{potts2016sparse} also works by performing $D$ rounds of $(i)$ entry identification to find sets called $I^{(t)}$ in \cite{potts2016sparse} which are essentially identical the $\mathcal{N}_{t-1}$ sets found in our Algorithms~\ref{alg:main}~and~\ref{alg:Entry} above, followed by $(ii)$ a pairing step to build up sets called $I^{(1,\dots,t)}$ in \cite{potts2016sparse} which are essentially identical to the sets $\mathcal{P}_{t-1}$ in our Algorithm~\ref{alg:Pairing}.  As a result, the papers \cite{potts2016sparse, kammerer2017high} identify the support in Fourier of the Fourier sparse functions they seek to approximate via entry identification and pairing just as we do herein, at least superficially.  However, there are several important differences between our algorithm's support identification strategy and the one used by \cite{potts2016sparse, kammerer2017high}, the most crucial of which stems from the choice of the estimators used in \cite{potts2016sparse, kammerer2017high} versus those used herein in order to determine the $\mathcal{N}_{t-1}$/$I^{(t)}$ and $\mathcal{P}_{t-1}$/$I^{(1,\dots,t)}$ sets.  

As can be seen in line 7 of Algorithm~\ref{alg:Entry}, we utilize a median estimator to determine our $\mathcal{N}_{t-1}$ sets herein, whereas Algorithm 1 of \cite{potts2016sparse} effectively uses the magnitude of the Fourier coefficients of functions along the lines of $h_{j;k}$ from Algorithms~\ref{alg:Entry} (except with $h \leftarrow f$) as an estimator to identify their $I^{(t)}$ sets.  Similarly, the energy estimator we use in line 7 of Algorithm~\ref{alg:Pairing} to determine each $\mathcal{P}_{t-1}$ is instead replaced in Algorithm 1 of \cite{potts2016sparse} by the magnitude of the Fourier coefficients of functions along the lines of $h(\vect{w}_{j,\ell},\vect{z}_{j,k})$ from Algorithm~\ref{alg:Pairing} above, except, again, with $h$ equal to the original function $f$.  
It turns out that these differences in the choices of the estimators used to determine the entry and pairing sets is pivotal for many other features of each method.  In \cite{potts2016sparse, kammerer2017high} the use of Fourier coefficients of modified functions for estimators allows the authors to employ (multiple) rank-1 lattice techniques (see e.g. \cite{kuo2019function,gross2020deterministic}) in order to efficiently and accurately compute their estimators in the Fourier basis setting.  Herein, our choice of different and more general estimators is essential to us being able to achieve uniform recovery guarantees for a significantly more general class of BOPB's.  

Similarly, in \cite{choi2016multi,choi2019multiscale} the support of Fourier sparse functions is also recovered using a modified dimension incremental strategy.  In particular, therein $D$-dimensional frequencies with nonzero Fourier coefficients are again extended element-wise in order to recover sets analogous to $\mathcal{P}_{t-1}$ above using a multidimensional phase encoding approach (see, e.g., section 3.1.1 of \cite{gilbert2014recent} for an example of phase encoding in the one-dimensional setting).
Though fast, the resulting support identification approach in \cite{choi2016multi,choi2019multiscale} is potentially lossy in the sense that some elements of the $\mathcal{P}_{t-1}$ sets that the methods above might recover can be lost by the algorithms in \cite{choi2016multi,choi2019multiscale}.  As a result, the algorithms therein may not identify all of the Fourier support of some Fourier sparse functions.  To compensate for those worst case scenarios  \cite{choi2016multi,choi2019multiscale} suggest a ``tilting method'' where the frequencies are projected onto alternate coordinates rotated with a certain angle which can help to recover such difficult functions, and also employs and iterative reconstruction approach in order to try to compensate for Fourier coefficients which might cancel one another out in some of the utilized projections.

A second crucial difference between the algorithms in \cite{potts2016sparse, kammerer2017high,choi2016multi,choi2019multiscale} and the approach proposed herein is the fact that we apply our support identification methods to the residual functions $h$ above \eqref{equ:defhres} so that the overall approximation error reduces at a fixed rate each iteration following the iterative strategy of the CoSaMP algorithm instead of, e.g., directly applying them to the original function $f$ itself, or iterating in some other fashion.  This allows us to use the compressive sensing error analysis techniques developed in \cite{needell2009cosamp} to our benefit, whereas those techniques are not applicable to the algorithms as developed in \cite{potts2016sparse, kammerer2017high,choi2016multi,choi2019multiscale}.  We are now prepared to begin proving the promised recovery results for our method.

\subsection{A Theoretical Guarantee for Support Identification}
\label{sec:theorySuppId}

The following lemma and theorem show that our support identification procedure (i.e., Algorithm~\ref{alg:Entry}, followed by Algorithm~\ref{alg:Pairing}) always identifies the indexes of the majority of the energetic entries in $\vect{r}$. Consequently, the energy of the residual is guaranteed to decrease from iteration to iteration of Algorithm~\ref{alg:main}.  We want to remind readers that $\vect{r}$ is always $2s$-sparse since $\vect{c}$ and each $\vect{a}^{t-1}$ are $s$-sparse in the present analysis (i.e., $\epsilon=0$).\\

\begin{lemma}
Suppose that  $\left\{ T_{\vect{n}}~\big|~\vect{n} \in \mathcal{I} \subseteq [M]^D \right\}$ is a BOS where each basis function $T_{\vect{n}}$ is defined as per \eqref{def:T_n} , and $T_{j;0}\equiv 1$ for all $j \in [D]$.
Let $\mathcal{H}_{2s}$ be the set of all functions, $h:  \mathcal{D} \rightarrow \mathbb{C}$, in ${\rm span} \left\{ T_{\vect{n}}~\big|~\vect{n} \in \mathcal{I} \right\}$ whose coefficient vectors are $2s$-sparse, and let $\vect{r}_h \in \mathbb{C}^{\mathcal{I}}$ denote the $2s$-sparse coefficient vector for each $h \in \mathcal{H}_{2s}$.  Fix $p\in \left(0,1/2 \right)$, $1 \leq d \leq D$, $N={D \choose d} M^d$, and $\displaystyle K=\sup_{\vect{n} \text{ s.t.} \|\vect{n}\|_{0}\leq d } \|T_{\vect{n}}\|_{\infty}$.  Then, one can randomly select a set of i.i.d. Gaussian weights $\mathcal{W} \subset \mathbb{R}$ for use in \eqref{def:fk}, and also randomly construct entry identification and pairing grids, $\mathcal{G}^I \subset \mathcal{D}$ and $\mathcal{G}^P \subset \mathcal{D}$  (recall \S\textcolor{cyan}{\ref{sec:SublinCompSense}}), whose total cardinality $\left| \mathcal{G}^I \right| + \left| \mathcal{G}^P \right|$ is\\
\small{$\mathcal{O}\left(sD\mathcal{L}'K^2 \max\{\log^2(s)\log^2(DN),\log(\frac{D}{p})\} \text{\footnotesize$+$}s^3DK^4 \max \left\{\log^4(s)\log^2(N) \log^2(DN), \log^2(\frac{D}{p}) \right\} \right)$},\normalsize \\
such that the following property holds $\forall h \in \mathcal{H}_{2s}$ with probability greater than $1-2p$:
\begin{adjustwidth}{0.25in}{0.25in}
Let $\vect{v}^I_h \in \mathbb{C}^{m'_2}$ and $\vect{v}^P_h \in \mathbb{C}^{m'_3}$ be samples from $h \in \mathcal{H}_{2s}$ on $\mathcal{G}^I$ and $\mathcal{G}^P$, respectively.  If Algorithms \ref{alg:Entry} and \ref{alg:Pairing} are granted access to $\vect{v}^I_h$, $\vect{v}^P_h$, $\mathcal{G}^I$, $\mathcal{G}^P$, and $\mathcal{W}$ then they will find a set $\Omega \subset [M]^D$ of cardinality $2s$ in line $11$ of Algorithm~\ref{alg:main} such that 
\begin{equation*}
\|(\vect{r}_{h})_{\Omega^c}\|_2\leq 0.202 \|\vect{r}_h\|_2.
\end{equation*}
\end{adjustwidth}
Furthermore, the total runtime complexity of Algorithms \ref{alg:Entry} and \ref{alg:Pairing} is always\\ 
\small{ $\mathcal{O}\left( s^2 D \mathcal{L} K^2 \max\{ \log^2(s) \log^2(DN),\log(\frac{D}{p})\} \text{\footnotesize$+$}s^5 D^2 K^4 \max \left\{\log^4(s)\log^2(N) \log^2(DN), \log^2(\frac{D}{p}) \right\} \right)$}. \normalsize \\
\label{lem:replCosamp}
\end{lemma}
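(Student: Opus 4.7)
\textbf{Proof plan for Lemma~\ref{lem:replCosamp}.}

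The plan is to analyze Entry Identification and Pairing separately under a single high-probability event on the random grids $\mathcal G^I,\mathcal G^P$ and Gaussian weights $\mathcal W$, and then to show that the top-$2s$ selection at the end of Algorithm~\ref{alg:Pairing} captures all but a controlled fraction of the energy of $\vect r_h$. Universality over $h\in\mathcal H_{2s}$ will come from the Restricted Isometry Property: every structured block of the measurement matrices in \eqref{defSamMat} is itself a BOS random sampling matrix to which Theorem~\ref{thm:BOS_RIP} applies, and RIP is a ``for all $2s$-sparse'' statement. Thus a single realization of $\mathcal G^I\cup \mathcal G^P$ will serve every residual $\vect r_h$ simultaneously.

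\emph{Accuracy of Entry Identification.} First I would show that with high probability over $\mathcal G^I$ and $\mathcal W$, the estimator in line~9 of Algorithm~\ref{alg:Entry} satisfies
\[\Bigl|\,\mathrm{median}_{k\in[L]}\bigl|\langle h_{j;k},T_{j;\widetilde n}\rangle_{(\mathcal D_j,\nu_j)}\bigr|-\|\vect r_{j;\widetilde n}\|_2\,\Bigr|\le\tfrac{\gamma}{\sqrt{s}}\|\vect r_h\|_2\]
for every $j\in[D]$, every $\widetilde n\in[M]$, and every $h\in\mathcal H_{2s}$, with $\gamma$ a small absolute constant. By \eqref{equ:EngEstEntryID} the target is an $L^2$-norm of a function; the grid $\mathcal G^I_j$ samples $h$ at the points $(\vect x_{j,\ell},u_{j,k})$, which lets the inner products against $T_{j;\widetilde n}$ be computed exactly at each $\vect x_{j,\ell}$ via the $\mathcal L'_j$-point rule on $\mathcal U_j$. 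The Gaussian weights in $G$ of \eqref{defG} then form $L$ independent sketches of the resulting length-$m$ vector. Because $\widetilde A_j$ satisfies the RIP of order $2s$ by Theorem~\ref{thm:BOS_RIP} once $m=\mathcal O(sK^2\log^2(s)\log^2(DN))$, a Krahmer--Ward-type Johnson--Lindenstrauss-from-RIP reduction shows that each sketch concentrates to within the stated additive error, and a median of $L=\mathcal O(\log(D/p))$ sketches plus a union bound over the $DM$ pairs $(j,\widetilde n)$ handles all entries and all $h$ simultaneously. This explains the first summand in $|\mathcal G^I|+|\mathcal G^P|$.

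\emph{Accuracy of Pairing.} Next I would control $E_{j;(\vect{\widetilde n}\dots)}$ in Algorithm~\ref{alg:Pairing} uniformly in $\vect{\widetilde n}$ and $h$. By \eqref{equ:EngEstPairing} its target is $\|\vect r_{j;(\vect{\widetilde n},\cdots)}\|_2^2$. The inner sum over $\ell$ is a single random row of $B_j$ applied to the coefficients, which by RIP of $B_j$ approximates the function $\langle h,T_{\vect{\widetilde n}}\rangle$ evaluated at $\vect z_{j,k}$; the outer average over $k$ is a Monte-Carlo estimate of the squared $L^2$-norm of that function, which by RIP of $C_j$ concentrates around the true squared norm. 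Composing these two concentrations and union bounding over the $\mathcal O(Ds^2)$ prefixes ever considered across the $D-1$ pairing stages yields
\[\bigl|E_{j;(\vect{\widetilde n}\dots)}-\|\vect r_{j;(\vect{\widetilde n},\cdots)}\|_2^2\bigr|\le\tfrac{\gamma'}{s}\|\vect r_h\|_2^2\]
for sizes $m_1,m_2=\mathcal O(sK^2\log^2(s)\log^2(DN))$, which accounts for the second summand in $|\mathcal G^I|+|\mathcal G^P|$.

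\emph{Support extraction, main obstacle, and runtime.} Conditioned on the two accuracy events, I would prove by induction on $j$ that $\mathcal P_j$ contains every length-$(j+1)$ prefix of ${\rm supp}(\vect r_h)$ whose energy $\|\vect r_{j;(\vect{\widetilde n},\cdots)}\|_2^2$ exceeds a threshold of order $\|\vect r_h\|_2^2/s$: at every level there are at most $2s$ such prefixes, which therefore fit into the top-$2s$ slots retained by Algorithm~\ref{alg:Pairing} once the pairwise estimation error is smaller than the gap between ``energetic'' and ``negligible''. The base case $j=0$ uses the Entry Identification bound with a thresholding argument. Choosing constants small enough, the output $\Omega=\mathcal P_{D-1}$ of size $2s$ misses total energy at most $2s\cdot\tau^2\le (0.202)^2\|\vect r_h\|_2^2$, proving the stated bound. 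The main obstacle is establishing the \emph{uniform} (over all $\vect r_h\in\mathcal H_{2s}$) $\mathcal O(1/s)$ additive accuracy for the pairing estimator simultaneously at every level $j$: the squared Monte-Carlo average in $E_{j;(\vect{\widetilde n}\dots)}$ is not itself an RIP object, so one must carefully compose the $B_j$- and $C_j$-concentrations, control cross-term biases, and union-bound over all prefixes while preserving the RIP-based universality. The runtime bound is then bookkeeping: Entry Identification costs $\mathcal O(DLm\mathcal L)$, and pairing evaluates $\mathcal O(Ds^2)$ prefixes at cost $\mathcal O(m_1 m_2)$ each, reproducing the stated $s^2D\mathcal L K^2$ and $s^5D^2K^4$ totals.
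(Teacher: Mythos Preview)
Your overall architecture matches the paper's: analyze Entry Identification and Pairing separately under RIP events for the blocks $\widetilde A_j$, $B_j$, $C_j$, prove by induction on $j$ that $\mathcal P_j$ retains all prefixes of energy $\geq \|\vect r_h\|_2^2/(\alpha^2 s)$, and conclude $\|(\vect r_h)_{\Omega^c}\|_2\leq \sqrt{2}\|\vect r_h\|_2/\alpha$ with $\alpha=7$. But two of your key quantitative claims are off, and one of them is a genuine gap.

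\textbf{Entry Identification universality.} You assert that RIP of $\widetilde A_j$, via a Krahmer--Ward JL-from-RIP reduction, plus a median over $L=\mathcal O(\log(D/p))$ sketches and a union bound over the $DM$ pairs $(j,\widetilde n)$, handles \emph{all} $h\in\mathcal H_{2s}$ simultaneously. This does not work. The estimator is $\bigl|\langle \vect g^k,\widetilde A_j \vect r_{j;\widetilde n}\rangle\bigr|$, so after conditioning on $\widetilde A_j$ the only remaining randomness is in the Gaussians $\vect g^k$, and these must be \emph{fixed once} and then work for every $2s$-sparse $\vect r_h$. Krahmer--Ward concerns $\Phi D_\xi$ with the random signs on the \emph{domain} side and gives JL for a finite set of points; here the Gaussians act on the range side and you need a median, not a norm. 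The paper instead proves the per-$(h,j,\widetilde n,k)$ event has probability $>1/2$ (Lemmas~4.2--4.3), applies Chernoff so that the median succeeds with probability $\leq e^{-cL}$, and then union bounds over an $\epsilon$-net of unit $2s$-sparse vectors of cardinality $(cN/s)^{2s}(1+2/\epsilon)^{2s}$ (Theorem~4.5). This forces $L\sim s\log N$, not $\log(D/p)$. Your sampling count is unaffected (the same $m$ points are reused for all $L$ sketches), but your runtime term $mL\mathcal L D$ then becomes $\mathcal O(s^2 D\mathcal L K^2\cdots)$, which is exactly the $s^2$ in the statement and not explainable with your $L$.

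\textbf{Pairing sizes and the ``main obstacle''.} Your additive accuracy $|E_{j;(\widetilde{\vect n}\dots)}-\|\vect r_{j;(\widetilde{\vect n},\cdots)}\|_2^2|\leq \gamma'\|\vect r_h\|_2^2/s$ is the right target, but it cannot be achieved with $m_1=\mathcal O(sK^2\cdots)$: the inner term requires $\|(B_j)_{\mathcal Q}^*(B_j)_{\mathcal Q}-I\|\leq \widetilde\delta_{2s+1}\leq c/\sqrt s$, hence $m_1=\mathcal O(s^2K^2\cdots)$ by Theorem~\ref{thm:BOS_RIP}, which is what produces the $s^3$ in $|\mathcal G^P|\sim m_1m_2D$. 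Also, your identified ``main obstacle'' (cross-term biases, union bound over prefixes) is not where the work lies. The paper avoids both by writing $E_{j;(\widetilde{\vect n}\dots)}=\|C_j(\widetilde{\vect r}/m_1)\|_2^2$ for a $2s$-sparse $\widetilde{\vect r}$ and $\widetilde{\vect r}/m_1=R_jB_j^*B_j\vect e_{\widetilde{\vect n}}$ (Lemma~4.7); RIP of $C_j$ and $B_j$ then give the two-sided bound \emph{deterministically for every $2s$-sparse $\vect r_h$ and every prefix $\widetilde{\vect n}$}, with no further union bound needed.
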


In Lemma~\ref{lem:replCosamp} above $\mathcal{L}'$ denotes the maximum number of points in $\mathcal{D}_j$ required in order to determine the value of $\langle g, T_{j;\widetilde{n}} \rangle_{(\mathcal{D}_j, \nu_j)}$ for all $\widetilde{n}\in[M]$ in $\mathcal{O}(\mathcal{L})$-time for any given $s$-sparse $g: \mathcal{D}_j \rightarrow \mathbb{C}$ in ${\rm span} \left\{ T_{j;m}~\big|~m\in [M] \right\}$, maximized over all $j \in [D]$.  See \S\ref{sec:MainResults} for a more in depth discussion of these quantities.  The reader is also referred back to \S\ref{sec:UniversalGrids} and \S\ref{sec:SublinCompSense} for a discussion of the entry identification and pairing grids, $\mathcal{G}^I$ and $\mathcal{G}^P$, mentioned in Lemma~\ref{lem:replCosamp}.  The proof of Lemma \ref{lem:replCosamp}, which is quite long and technical, is given in Section \ref{EntryId+pairing}.  Once Lemma \ref{lem:replCosamp} has been established, however, it is fairly straightforward to prove that Algorithm~\ref{alg:main} will always rapidly recover any function of $D$-variables which exhibits sparsity in a tensor product basis by building on the results in \cite{needell2009cosamp}.  We have the following theorem.\\

\begin{mytheorem}
Suppose that  $\left\{ T_{\vect{n}}~\big|~\vect{n} \in \mathcal{I} \subseteq [M]^D \right\}$ is a BOS where each basis function $T_{\vect{n}}$ is defined as per \eqref{def:T_n}, and $T_{j;0}\equiv 1$ for all $j \in [D]$.
Let $\mathcal{F}_{s}$ be the subset of all functions $f \in {\rm span} \left\{ T_{\vect{n}}~\big|~\vect{n} \in \mathcal{I} \right\}$ whose coefficient vectors are $s$-sparse, and let $\vect{c}_f \in \mathbb{C}^{\mathcal{I}}$ denote the $s$-sparse coefficient vector for each $f \in \mathcal{F}_{s}$.  Fix $p\in \left(0,1/3 \right)$, $1 \leq d \leq D$, $N={D \choose d} M^d$, $\displaystyle K=\sup_{\vect{n} \text{ s.t.} \|\vect{n}\|_{0}\leq d } \|T_{\vect{n}}\|_{\infty}$, and a precision parameter $\eta > 0$.  Then, one can randomly select a set of i.i.d. Gaussian weights $\mathcal{W} \subset \mathbb{R}$ for use in \eqref{def:fk}, and also randomly construct a compressive sensing grid, $\mathcal{G} \subset \mathcal{D}$, whose total cardinality $\left| \mathcal{G} \right|$ is\\
 \small $\mathcal{O}\left(sD\mathcal{L}'K^2 \max\{ \log^2(s)\log^2(DN),\log(\frac{D}{p})\} +s^3DK^4\max \left\{\log^4(s) \log^2(N) \log^2(DN), \log^2(\frac{D}{p}) \right\} \right)$, \normalsize\\ 
such that the following property holds $\forall f \in \mathcal{F}_{s}$ with probability greater than $1-3p$:
\begin{adjustwidth}{0.25in}{0.25in}
Let $\vect{y}$ consist of samples from $f \in \mathcal{F}_{s}$ on $\mathcal{G}$.  If Algorithm \ref{alg:main} is granted access to $\vect{y}$, $\mathcal{G}$, and $\mathcal{W}$, then it will produce an $s$-sparse approximation $\vect{a}$ such that 
\begin{equation*}
\|\vect{c}_f-\vect{a}\|_2 \leq C\eta.
\end{equation*}
Here $C > 0$ is an absolute constant.
\end{adjustwidth}
Furthermore, the total runtime complexity of Algorithm \ref{alg:main} is always\\
\small $\mathcal{O}\Big(\Big( s^2 D^2 \mathcal{L} K^2 \max\{ \log^2(s) \log^2(DN) ,\log(\frac{D}{p})\}+s^5 D^2 K^4   \max \big\{\log^4(s)\log^2(N) \log^2(DN), \log^2(\frac{D}{p}) \big\} \Big) \allowbreak \times \log \frac{\|\vect{c}_f\|_2}{\eta}\Big)$\normalsize when $\left| \mathcal{G} \right|$ is bounded as above.  
\label{mainThm}
\end{mytheorem}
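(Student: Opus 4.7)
The plan is to treat Lemma~\ref{lem:replCosamp} as a black box and inject it into the standard CoSaMP analysis of \cite{needell2009cosamp}. Because $\vect{c}_f$ and every iterate $\vect{a}^t$ are $s$-sparse, every residual $\vect{r}^t := \vect{c}_f - \vect{a}^t$ lies in the class $\mathcal{H}_{2s}$ to which Lemma~\ref{lem:replCosamp} applies, and the set $\Omega$ returned on line~11 of Algorithm~\ref{alg:main} satisfies the CoSaMP-style invariant $\|(\vect{r}^t)_{\Omega^c}\|_2 \leq 0.202\,\|\vect{r}^t\|_2$. This is the only deviation from classical CoSaMP, so the remainder of the proof amounts to verifying that $(i)$ the random sampling matrix $\Phi$ used for the least-squares estimation step has the RIP with a small enough constant, $(ii)$ the three probabilistic events (RIP of $\Phi$, and the two events of Lemma~\ref{lem:replCosamp} on $\mathcal{G}^I$ and $\mathcal{G}^P$) occur simultaneously, and $(iii)$ the combination still yields a geometric contraction of the residual.

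First I would invoke Theorem~\ref{thm:BOS_RIP} with $s \mapsto 3s$ and $\delta$ a small absolute constant (e.g.\ $\delta_{3s}<0.1$) to fix $m'_1 = \mathcal{O}(K^2 s\, \log^2(s)\log(N)\log(m'_1) + \log(1/p))$, producing $\Phi \in \mathbb{C}^{m'_1\times N}$ with the required RIP with probability $\geq 1-p$. The factor $3s$ is used because in each iteration the merged support $T = \Omega \cup \mathrm{supp}(\vect{a}^t)$ has cardinality at most $3s$, so the least-squares step $\vect{b}_T = \Phi_T^{\dagger}\vect{y}^{\rm E}$ is numerically stable. Taking a union bound with the $1-2p$ event of Lemma~\ref{lem:replCosamp} yields probability at least $1-3p$ that all desirable properties of $\mathcal{G} = \mathcal{G}^E \cup \mathcal{G}^I \cup \mathcal{G}^P$ hold simultaneously; the total grid size stated in the theorem is then the sum of $|\mathcal{G}^E| = m'_1$ and the grid size of Lemma~\ref{lem:replCosamp}, which is dominated by the latter.

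Next I would establish the per-iteration contraction. Assume the good event holds. Following the CoSaMP argument verbatim with the invariant $\|(\vect{r}^t)_{\Omega^c}\|_2 \leq 0.202\,\|\vect{r}^t\|_2$ replacing their identification bound: the RIP of $\Phi_T$ gives $\|\vect{c}_{f,T} - \vect{b}_T\|_2 \leq \frac{1}{1-\delta_{3s}}\|(\vect{r}^t)_{\Omega^c}\|_2$ (plus terms that vanish in the noiseless setting $\epsilon = 0$), the truncation-to-$s$ step at most doubles this error, and combining with $\|(\vect{c}_f - \vect{b}_T)_{T^c}\|_2 = \|(\vect{r}^t)_{\Omega^c}\|_2$ yields $\|\vect{r}^{t+1}\|_2 \leq \rho\,\|\vect{r}^t\|_2$ for some absolute $\rho < 1$ as long as $\delta_{3s}$ is small enough. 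Iterating this bound $t^* = \mathcal{O}(\log(\|\vect{c}_f\|_2/\eta))$ times gives $\|\vect{r}^{t^*}\|_2 \leq C\eta$. The halting criterion in Algorithm~\ref{alg:main} is simply set to terminate after $t^*$ iterations (or, equivalently, once $\|\vect{y}^{\rm E} - \Phi\vect{a}^t\|_2$ falls below a threshold calibrated against $\eta$).

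Finally, runtime bookkeeping: each iteration costs the $\mathcal{O}(s^2 D \mathcal{L} K^2 \cdots + s^5 D^2 K^4 \cdots)$ from Lemma~\ref{lem:replCosamp} for support identification, plus the least-squares solve on $\Phi_T$ with at most $3s$ columns and $m'_1$ rows (negligible compared to the identification cost), plus $\mathcal{O}(|\mathcal{G}^I| + |\mathcal{G}^P|)$ work to update $\vect{v}^I$ and $\vect{v}^P$; multiplying by $t^*$ gives the stated runtime. The main obstacle is not the overall structure (which is essentially CoSaMP's), but the constant juggling in the contraction argument: I need the identification bound $0.202$ from Lemma~\ref{lem:replCosamp} together with the chosen RIP constant $\delta_{3s}$ to combine into a genuine $\rho<1$ in the residual recursion, and to verify that all constants — in particular the factor $0.202$, which was presumably tuned in Lemma~\ref{lem:replCosamp} precisely so the CoSaMP recurrence closes — indeed match up with the usual CoSaMP thresholds. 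The remaining routine task is absorbing the $\binom{D}{d}M^d$ cardinality into the $\log^2(DN)$ factors using the footnote assumption $|\mathcal{G}|\leq N^c$.
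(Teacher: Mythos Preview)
Your proposal is correct and follows essentially the same route as the paper: swap CoSaMP's identification lemma for Lemma~\ref{lem:replCosamp}, keep the rest of the CoSaMP analysis (Lemmas~4.3--4.5 of \cite{needell2009cosamp}) intact to get $\|\vect{r}^{t+1}\|_2 \le 0.5\|\vect{r}^t\|_2$, and union-bound the RIP of $\Phi$ with the $1-2p$ event of Lemma~\ref{lem:replCosamp}. One small bookkeeping point: the sample-update step costs $\mathcal{O}(sD(|\mathcal{G}^I|+|\mathcal{G}^P|))$, not $\mathcal{O}(|\mathcal{G}^I|+|\mathcal{G}^P|)$ (evaluating $A^I\vect{a}^t$ requires $\mathcal{O}(sD)$ work per grid point), and this is precisely where the extra factor of $D$ in the first term of the stated runtime (i.e., $s^2D^2\mathcal{L}K^2$ versus the $s^2D\mathcal{L}K^2$ of Lemma~\ref{lem:replCosamp}) comes from.
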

As above, we refer the reader back to \S\ref{sec:MainResults} for a discussion of the quantities $\mathcal{L}'$ and $\mathcal{L}$ appearing in Theorem~\ref{mainThm}, as well as to \S\ref{sec:UniversalGrids} for more information on the compressive sensing grid $\mathcal{G} \subset \mathcal{D}$ mentioned therein.

\begin{proof}
In order to analyze the support identification step, we replace Lemma 4.2 in \citep{needell2009cosamp} by Lemma \ref{lem:replCosamp}, and then we obtain
\begin{equation*}
\|\vect{c}_f-\vect{a}^{t+1}\|_2 \leq 0.5 \|\vect{c}_f-\vect{a}^{t}\|_2
\end{equation*}
for each iteration $t\geq 0$, which is the same as in  Theorem 2.1 in \cite{needell2009cosamp} provided that $f$ is exactly $s$-sparse and samples are not noisy. Except for the support identification step(s), Algorithm \ref{alg:main} agrees with CoSaMP, so that Lemmas 4.3 -- 4.5 in \cite{needell2009cosamp} still directly apply to Algorithm~\ref{alg:main}.  After $\mathcal{O}\left(\log \frac{\|\vect{c}_f\|_2}{\eta}\right)$ iterations, we see that the $s$-sparse approximation $\vect{a}$ therefore satisfies
\begin{equation*}
\|\vect{c}_f-\vect{a}\|_2 \leq C\eta.
\end{equation*}
Since the runtime complexity of the support identification steps and the sample update process in each iteration, the total running time arises from multiplying it with the number of iterations. The number of sample points, $m'_1$, $m$, $m_1$ and $m_2$ used to define the matrices $\Phi$, $\widetilde{A}_{j}$, $B_j$ and $C_j$ discussed in \S\ref{sec:SublinCompSense} are all chosen to ensure that these resulting measurement matrices have the RIP.  Thus, the samples of $f$ in $\vect{y}$ can be reused over as many iterations as needed. Updating the samples of each residual function for the entry identification or pairing causes an extra $\mathcal{O}({sDm'_2})$ and $\mathcal{O}({sDm'_3})$ computations respectively which gives rise to a $D^2$ factor instead of $D$ in the first term of runtime complexity. The probability of successful recovery for all $f \in \mathcal{F}_{s}$ is obtained by taking the union bound over the failure probability $p$ of $\Phi$ having $\delta_{2s}<0.025$ via Theorem~\ref{thm:BOS_RIP} together with the failure probability $2p$ of Lemma \ref{lem:replCosamp}.
\end{proof}

We are now prepared to begin the process of proving Lemma~\ref{lem:replCosamp}.


\section{Analysis of the Support Identification} \label{sec:Analysis}
\setcounter{equation}{0}

In this section, we analyze the performance of the sublinear-time support identification technique proposed in Algorithms \ref{alg:Entry} and \ref{alg:Pairing}.  First, we show in Section \ref{sec:entryId} the success of the entry identification step. Indeed, Theorems \ref{Thm:FiniteSetMedtoUn} and \ref{Thm:FiniteSetMedtoUn2}  show under the RIP assumption that certain one-dimensional proxy functions allow us to identify the entry with large corresponding coefficients. Lemma \ref{Thm:FiniteSetMedtoUn3} then estimates the necessary sample complexity. In Section \ref{sec:pairing}, we analyze the pairing step showing that it works uniformly for any $2s$-sparse functions in Theorem \ref{thm:pairing}. Finally, in Section \ref{EntryId+pairing}, we complete the proof of the Lemma \ref{lem:replCosamp} providing the complete result for the proposed support identification method by combining/using the results of Sections \ref{sec:entryId} and \ref{sec:pairing}.

\subsection{Entry Identification}
\label{sec:entryId}

In this section we aim to find $\mathcal{N}^t_j$ containing the $j$-th entries of the index vectors of the nonzero transform coefficients of $h$ for all $j\in[D]$, which is done by Algorithm \ref{alg:Entry}. Define $[D]':=[D]\setminus \{j\}$.

Assume without loss of generality that the number $L \in \mathbb{N}$ of proxy functions is odd.  Choose $\mathcal{X}_j := \{\vect{x}_{j,\ell}\}_{\ell \in [m]}$ where each $\vect{x}_{j,\ell}$ is chosen independently at random from $\mathcal{D}'_j$ according to $d\vect{\nu}'_j$.  
Also, choose $\{{g}_1^{k}, \cdots, {g}_m^{k}\}_{k\in[L]}$ where each $g^k_{\ell}$ is an i.i.d. standard Gaussian variable $\sim \mathcal{N}(0,1)$, which forms $\mathcal{W}$ introduced in Section \ref{sec:UniversalGrids}. We define a function $h_{j;k}: \mathcal{D}_j \rightarrow \mathbb{C}$ of one variable in $\mathcal{D}_j$ as follows,
\begin{equation}
h_{j;k}(x):=\frac{1}{\sqrt{m}}\sum_{\ell\in[m]} g^k_{\ell} h([x,\vect{x}_{j,\ell}]) =\frac{1}{\sqrt{m}}\sum_{\ell\in[m]} g^k_{\ell} \sum_{\vect{n}\in {\rm  supp}(\vect{r})}  r_{\vect{n}}T_{j;n_j}(x)\prod _{i\in [D]'} T_{i;n_{i}}(x_{j,\ell})_i,
\label{def:fk}
\end{equation}
and $[x,\vect{x}_{j,\ell}]$ is the vector obtained by inserting the variable $x$ between the entries of $\vect{x}_{j,\ell}$ indexed by $[j]$ and $\{j,j+1\cdots, D-2\}$, i.e.,
\begin{equation*}
[x,\vect{x}_{j,\ell}]:=\left( (x_{j,\ell})_0, (x_{j,\ell})_1, \cdots, (x_{j,\ell})_{j-1},~ x~, (x_{j,\ell})_{j}, \cdots, (x_{j,\ell})_{D-2} \right).
\end{equation*}
For the sake of simplicity, we let $T_{\check{\vect{n}}}(\check{\vect{x}}):=\prod _{i\in [D]'} T_{i;\check{n}_{i}} (\check{x}_i)$ with 
$\check{\vect{n}}\in [M]^{D-1}$
and
$\check{\vect{x}} \in \mathcal{D}'_j$. 

We choose $m$ large enough above to form the normalized random sampling matrix $\widetilde{A}_{j} \in \mathbb{C}^{ m \times [M]^{D-1} }$ for each $j \in [D]$, defined as
\begin{equation}
\left( \widetilde{A}_{j} \right)_{\ell, \check{\vect{n}}}:= 
\frac{1}{\sqrt{m}} {T}_{{\check{\vect{n}}}}
({{\vect{x}}_{j,\ell}}), ~ \ell \in [m] ,~ \check{\vect{n}} \in [M]^{D-1},
\label{eqn:defA}
\end{equation}
so that each one has a restricted isometry constant $\delta_{2s}$ of at most $\delta$ with high probability in its restricted form. Here, the restricted form is introduced by 
eliminating the columns of the full $\widetilde{A}_j$ indexed by vectors ${\vect{n}} \notin \mathcal{I}_{j;\widetilde{n}}$. 
 To explain further, we denote $\widetilde{A}_j P_{j;\widetilde{n}} \vect{r}=\widetilde{A}_j \vect{r}_{j;\widetilde{n}}$ where $P_{j;\widetilde{n}}$ is a restriction matrix defined in  (\ref{equ:ProjMatrix}). This comes from the inner product in line $10$ of Algorithm \ref{alg:Entry} which is calculated by using the evaluations of $h_{j,k}$ at $\mathcal{U}_j$ defined in Section  \ref{sec:UniversalGrids}. Then, the inner product can be written as $\left( G \widetilde{A}_j P_{j;\widetilde{n}} \vect{r} \right)_k$ where $G$ is defined as in (\ref{defG}). In other words, the evaluations of $h$ at $[u_{j,k},\vect{x}_{j,\ell}]$ from $\mathcal{G}^{I}_j$ in Section \ref{sec:UniversalGrids} are utilized to compute the inner product.
 Then, the matrix-vector multiplication $\widetilde{A}_j \vect{r}_{j;\widetilde{n}}$ can be considered in its restricted form by eliminating the columns of $\widetilde{A}_j$ and elements of $\vect{r}_{j;\widetilde{n}}$ which are zero due to their corresponding index vectors not belonging to $\mathcal{I}_{j;\widetilde{n}}$. The resulting restricted matrix $\widetilde{A}_j$ has the size $m \times N'$ where $N'$ is bounded above in (\ref{equ:Dimvec1}) so that $\widetilde{A}_j$ has the restricted isometry constant $\delta_{2s}$ mentioned.
The advantage of forming RIP matrices in this fashion is that it allows us to analyze the different iterations of Algorithm~\ref{alg:main} repeatedly with the same RIP matrices. For a discussion about the number of measurements needed to ensure that  $\widetilde{A}_j$ satisfies the RIP, see the following lemma (which is a simple consequence of Theorem \ref{thm:BOS_RIP}).

	\begin{lemma} 
		Let $\widetilde{A}_j\in \mathbb{C}^{m\times N'}$ be the random sampling matrix as in (\ref{eqn:defA}) in its restricted form. 
		If, for $\delta, p \in (0,1)$, 
		\begin{equation*}
		m \geq a K^2 \delta^{-2} s \max \left\{ d  \log^2(4s) \log(9m) \log\left( \frac{8e(D-1)DM}{d}\right), \log\left(\frac{D}{p} \right) \right\},
		\end{equation*}
		then with probability at least $1-p$, the restricted isometry constant $\delta_s$ of  $\widetilde{A}_j$ satisfies $\delta_s \leq \delta$ for all $j\in[D]$. The constant $a>0$ is universal.
		\label{lem:RIPforD}
	\end{lemma}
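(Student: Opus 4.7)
The plan is to deduce Lemma~\ref{lem:RIPforD} by applying Theorem~\ref{thm:BOS_RIP} once to each $\widetilde{A}_j$ (in its restricted form) and then taking a union bound over $j \in [D]$. The probabilistic heavy lifting is already done by Theorem~\ref{thm:BOS_RIP}; what remains is verifying that its hypotheses are in place and then absorbing the column-count bound from \eqref{equ:Dimvec1} into the exact form stated in the lemma.

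First, I would check that each restricted matrix $\widetilde{A}_j$ really is a BOS random sampling matrix. The sample points $\vect{x}_{j,\ell}$ are drawn i.i.d.\ according to $\vect{\nu}'_j = \otimes_{i \in [D]'} \nu_i$, and the column-indexing basis functions $T_{\check{\vect{n}}}(\check{\vect{x}}) = \prod_{i \in [D]'} T_{i;\check{n}_i}(\check{x}_i)$ are orthonormal with respect to $\vect{\nu}'_j$ because each one-dimensional factor is orthonormal with respect to its $\nu_i$. Every surviving column after the $\mathcal{I}_{j;\widetilde{n}}$-restriction is indexed by a $\check{\vect{n}}$ whose zero-padded extension to $[M]^D$ lies in $\mathcal{I}$, so at most $d$ of the factors $\widetilde{K}_i$ contribute nontrivially and, since each $\widetilde{K}_i \geq 1$, one has $\|T_{\check{\vect{n}}}\|_\infty = \prod_{i \in [D]'} \|T_{i;\check{n}_i}\|_\infty \leq K$. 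Thus the restricted sub-BOS has constant at most $K$.

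Second, I would invoke Theorem~\ref{thm:BOS_RIP} with this restricted sub-BOS, column count $N'$, and failure probability $p/D$. That yields $\delta_s(\widetilde{A}_j) \leq \delta$ with probability at least $1 - p/D$ provided
$$m \geq a K^2 \delta^{-2} s \max\left\{\log^2(4s)\, \log(8N')\, \log(9m),\ \log(D/p)\right\}.$$
Using the worst-case ($\|\widetilde{n}\|_0 = 0$) bound from \eqref{equ:Dimvec1}, namely $N' \leq \left(\frac{e(D-1)M}{\max\{d-1,1\}}\right)^d$, an elementary computation (the case $d=1$ by inspection; the case $d \geq 2$ using $D(d-1) \geq d$) gives $\log(8N') \leq d \log\!\left(\tfrac{8e(D-1)DM}{d}\right)$, which produces exactly the quantity written in the lemma. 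A union bound over $j \in [D]$ then upgrades the per-$j$ statement into the simultaneous conclusion with aggregate failure probability at most $p$.

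The main obstacle is essentially cosmetic: everything follows by bookkeeping from Theorem~\ref{thm:BOS_RIP}. The two places that deserve a little care are (i) confirming that the restricted sub-BOS constant is controlled by $K$ rather than by $K \cdot \widetilde{K}_j$ or a larger product, and (ii) performing the $\log(8N')$ estimate uniformly over all $\|\widetilde{n}\|_0 \in \{0,1\}$ so that a single RIP guarantee for $\widetilde{A}_j$ (rather than one per $\widetilde{n}$) suffices when combined with the union bound.
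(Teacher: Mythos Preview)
Your proposal is correct and matches the paper's own treatment: the paper does not write out a proof for this lemma but simply states it ``is a simple consequence of Theorem~\ref{thm:BOS_RIP}'', which is exactly what you do---apply Theorem~\ref{thm:BOS_RIP} with failure probability $p/D$ to each $\widetilde{A}_j$, bound the column count via \eqref{equ:Dimvec1}, and take a union bound over $j\in[D]$. Your attention to the two subtleties (the BOS constant of the restricted system being at most $K$, and the $\widetilde{n}=0$ restriction dominating all others so that one RIP guarantee per $j$ suffices) goes beyond what the paper spells out and is handled correctly.
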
 

As we will show, more than half of the proxy functions, $\{h_{j;k}\}_{k\in [L]}$ are guaranteed with high probability to have $\|h_{j;k}\|_{L_2(\mathcal{D}_j,\nu_j)}$ bounded above by $\|\vect{r}\|_2$ up to some constant, and also $\left| \langle h_{j;k}, T_{j;\widetilde{n}}\rangle_{(\mathcal{D}_j,\nu_j)}\right|$ bounded above and below by $\|\vect{r}_{j;\widetilde{n}}\|_2$ up to some constants for  all $ \widetilde{n}\in [M]$ and $j\in[D]$. 

To show this we consider the indicator variable
 $E_{h,j,\widetilde{n},k}$ which is $1$ if and only if all three of
\begin{enumerate}
	\item $\| h_{j;k} \|^2_{L^2(\mathcal{D}_j,\nu_j)} \leq \alpha' \| \vect{r} \|_2^2$ for the absolute constant $\alpha'$ defined in Lemma~\ref{lem:fkPresTotalEng}, 
	\item ${\frac{9}{4}}\| \vect{r}_{j;\widetilde{n}} \|_2 \geq \left| \langle h_{j;k}, T_{j;\widetilde{n}} \rangle_{(\mathcal{D}_j, \nu_j)} \right| \geq \frac{\sqrt{23}}{12} \| \vect{r}_{j;\widetilde{n}} \|_2 $, and
	\item the vector of Gaussian weights $\vect{g}^k \in \mathbb{R}^m$ satisfying $ \frac{1}{2} m \leq \| \vect{g}^k  \|^2_2 \leq \frac{3}{2}  m$,
\end{enumerate}
are simultaneously true, and 0 otherwise.

The proof will proceed as follows. Lemmas \ref{lem:fkPresTotalEng} and \ref{lem:fkInnerProfBig} together with the bound on $\|\vect{g}^k\|_2$ through Bernstein's inequality imply that the probability of each $E_{h,j,\widetilde{n}, k}$ being $1$ is greater than $0.5$. Combining this with Chernoff bound, the deviation of $\sum_{k\in[L]} E_{h,j,\widetilde{n},k}$ below its expectation shows exponential decay in its distribution. As a result, with sufficiently many proxy functions, i.e., sufficiently large $L$, the probability that $\sum_{k\in [L]} E_{h,j,\widetilde{n},k}< L/2$ for all $(h,j,\widetilde{n}) \in \mathcal{H}\times [D] \times [M]$ becomes very small for any finite set of functions $\mathcal{H} \subset \mathcal{H}_{2s}$ whose BOS coefficient vectors are $2s$-sparse (see Theorem \ref{Thm:FiniteSetMedtoUn}).  The number $L$ logarithmically depends on $DM|\mathcal{H}|$. In order to get the desired properties for all $2s$-sparse functions satisfying our support assumption, the finite function set $\mathcal{H}$ is taken as $\mathcal{H}^{\epsilon}$ with corresponding coefficient vector set $\mathcal{R}^{\epsilon}$ which is an $\epsilon$-cover over all $\ell_2$-normalized $2s$-sparse vectors in $\mathbb{C}^N$ in Theorem \ref{Thm:FiniteSetMedtoUn2}. Thus, with high probability, the desired properties hold uniformly, i.e., for all functions of interest, and for all $ j\in[D]$ and $\widetilde{n}\in[M]$.

The following lemma bounds the energy of the proxy functions.


\begin{lemma} Suppose that $\vect{r} \in \mathbb{C}^{N}$ is $2s$-sparse, and the restricted isometry constant $\delta_{2s}$ of $\widetilde{A}_j$ satisfies $\delta_{2s}\leq\delta$ for all $j\in[D]$ where $\delta\in(0,7/16)$.  Then, for each $k\in[L]$, there exists an absolute constant $\alpha' \in \mathbb{R}^+$ such that 
\begin{equation}
\mathbb{P} \left[ \| h_{j;k} \|^2_{L^2(\mathcal{D}_j,\nu_j)} \geq \alpha' ||\vect{r}||_2^2 \right] \leq .025
\label{lem:equ:fkPresTotalEng}
\end{equation}
for all $j\in[D]$.
\label{lem:fkPresTotalEng}
\end{lemma}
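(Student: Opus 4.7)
My plan is to expand $h_{j;k}$ in the orthonormal one-dimensional basis $\{T_{j;\widetilde{n}}\}_{\widetilde{n}\in[M]}$, identify the resulting coefficients as linear functionals of the Gaussian vector $\vect{g}^k$, compute the expectation of $\|h_{j;k}\|^2_{L^2(\mathcal{D}_j,\nu_j)}$ in closed form using the RIP of $\widetilde{A}_j$, and then apply Markov's inequality to get the desired tail bound. Since we only need a constant failure probability (namely $0.025$), Markov suffices; there is no need for a Hanson--Wright style quadratic-form concentration bound.

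The first step is to regroup the double sum defining $h_{j;k}$ in \eqref{def:fk} by the value $\widetilde{n} = n_j$, which gives
\begin{equation*}
h_{j;k}(x) \;=\; \sum_{\widetilde{n}\in[M]} a_{\widetilde{n}}\, T_{j;\widetilde{n}}(x), \qquad
a_{\widetilde{n}} \;=\; \frac{1}{\sqrt{m}}\sum_{\ell\in[m]} g^k_\ell \sum_{\vect{n}:\,n_j=\widetilde{n}} r_{\vect{n}}\, T_{\check{\vect n}}(\vect{x}_{j,\ell}).
\end{equation*}
Recognizing the inner sum as $\sqrt{m}$ times the $\ell$-th entry of $\widetilde{A}_j \vect{r}_{j;\widetilde{n}}$ (via \eqref{eqn:defA} and the definition \eqref{def:vecentryfixed} of $\vect{r}_{j;\widetilde{n}}$) yields the compact identity $a_{\widetilde{n}} = \langle \vect{g}^k, \widetilde{A}_j \vect{r}_{j;\widetilde{n}} \rangle$. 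By orthonormality of $\{T_{j;\widetilde{n}}\}$, Parseval gives $\|h_{j;k}\|^2_{L^2(\mathcal{D}_j,\nu_j)} = \sum_{\widetilde{n}} |a_{\widetilde{n}}|^2$.

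Next I compute the expectation with respect to the standard Gaussians $\vect{g}^k$. Because $\mathbb{E}[g^k_\ell g^k_{\ell'}] = \delta_{\ell,\ell'}$, the cross terms vanish and one obtains $\mathbb{E}_{\vect{g}^k}[|a_{\widetilde{n}}|^2] = \|\widetilde{A}_j \vect{r}_{j;\widetilde{n}}\|_2^2$. Summing over $\widetilde{n}$ and invoking the RIP of $\widetilde{A}_j$ (legitimate because each $\vect{r}_{j;\widetilde{n}}$ inherits at most $2s$ nonzero entries from $\vect{r}$ and lives in the index set $\mathcal{I}_{j;\widetilde{n}}$ where $\widetilde{A}_j$ has restricted isometry constant $\delta_{2s}\leq\delta$), followed by the fact that the supports of the $\vect{r}_{j;\widetilde{n}}$ partition $\mathrm{supp}(\vect{r})$, gives
\begin{equation*}
\mathbb{E}_{\vect{g}^k}\!\left[\|h_{j;k}\|^2_{L^2(\mathcal{D}_j,\nu_j)}\right] \;=\; \sum_{\widetilde{n}\in[M]} \|\widetilde{A}_j \vect{r}_{j;\widetilde{n}}\|_2^2 \;\leq\; (1+\delta)\sum_{\widetilde{n}\in[M]} \|\vect{r}_{j;\widetilde{n}}\|_2^2 \;=\; (1+\delta)\|\vect{r}\|_2^2.
\end{equation*}

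Finally, I apply Markov's inequality to conclude that $\mathbb{P}[\|h_{j;k}\|^2_{L^2} \geq \alpha' \|\vect{r}\|_2^2] \leq (1+\delta)/\alpha'$; choosing any absolute constant $\alpha' \geq (1+7/16)/0.025 = 57.5$ (e.g.\ $\alpha' = 58$) makes the right-hand side at most $0.025$, which is the claim. There is no real obstacle here beyond verifying the RIP is applied in the correct restricted form, which is already set up in the paragraph just above the lemma; the cleanness of this step is exactly what motivated introducing $\widetilde{A}_j$ in the first place.
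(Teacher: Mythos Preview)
Your proof is correct. Both you and the paper arrive at the identical key identity
\[
\|h_{j;k}\|^2_{L^2(\mathcal{D}_j,\nu_j)}=\sum_{\widetilde{n}\in[M]}\bigl|\langle \vect{g}^k,\widetilde{A}_j\vect{r}_{j;\widetilde{n}}\rangle\bigr|^2,
\]
with expectation $\sum_{\widetilde{n}}\|\widetilde{A}_j\vect{r}_{j;\widetilde{n}}\|_2^2=\|\widetilde{A}_jR\|_F^2$, and both bound this Frobenius norm via the RIP by a constant times $\|\vect{r}\|_2^2$. The routes diverge only at the tail bound: you apply Markov's inequality directly to the nonnegative random variable, whereas the paper rewrites $\|h_{j;k}\|^2_{L^2}$ as $\|(\widetilde{A}_jR)^*\vect{g}^k\|_2^2$, diagonalizes via the SVD to express it as a weighted sum of independent $\chi^2$ variables, and then invokes a Bernstein-type sub-exponential concentration inequality (Proposition~5.16 in \cite{vershynin2011introduction}) around the mean.

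What each approach buys: your Markov argument is shorter and entirely elementary, at the price of a larger absolute constant $\alpha'$ (on the order of $58$ rather than something like $1+\mathcal{O}(\log 80)$). The paper's Bernstein route yields a tighter $\alpha'$, which propagates into slightly better constants downstream (for instance in the choice $\epsilon\leq(6\sqrt{\alpha' m})^{-1}$ inside the covering argument of Theorem~\ref{Thm:FiniteSetMedtoUn2}), but since the lemma only asserts the existence of \emph{some} absolute constant, your argument fully establishes the claim as stated.
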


\begin{proof}
Consider the random sampling point set $\mathcal{X}_j=\left\{ \vect{x}_{j,\ell}~|~ \ell\in[m]\right\}$ to be fixed for the moment. We begin by noting that

\begin{align}
\| h_{j;k} \|^2_{L^2(\mathcal{D}_j,\nu_j)} 
=& \int_{\mathcal{D}_j} |h_{j;k}(x)|^2 d\nu_j(x) \nonumber \\
=&\int_{\mathcal{D}_j} \left| \frac{1}{\sqrt{m}}\sum_{\ell\in[m]} g^k_{\ell} h(\left[x,\vect{x}_{j,\ell}\right]) \right|^2 d\nu_j(x) \nonumber \\
=&\frac{1}{m} \sum_{\ell, \ell' \in [m]} g^k_{\ell} g^k_{\ell'} \int_{\mathcal{D}_j} h([x,\vect{x}_{j,\ell}]) \overline{h([x,\vect{x}_{j,\ell'}])} ~d\nu_j(x) \nonumber \\
=&\frac{1}{{m}}\sum_{\ell,\ell' \in [m]}  g^k_{\ell} g^k_{\ell'} \sum_{\vect{n}, \vect{n}' \in {\rm supp}(\vect{r})}  ~r_{\vect{n}} \overline{r_{\vect{n}'}}  \prod _{i\in [D]'} T_{i;n_{i}}(x_{j,\ell})_i 
\prod _{i'\in [D]'} \overline{T_{i';n'_{i'}}(x_{j,\ell'})_{i'} }\\
& \times \int_{\mathcal{D}_j} T_{j;n_j}(x) \overline{T_{j;n'_j}(x)} ~d\nu_j(x) \nonumber\\
=&\frac{1}{{m}}\sum_{\ell,\ell' \in [m]} g^k_{\ell} g^k_{\ell'} \left( \sum_{\widetilde{n}\in[M]} \sum_{\substack{{\vect{n},\vect{n}'\text{ s.t.}}\\{n_j=n'_j=\widetilde{n}}}} r_{\vect{n}} \overline{r_{\vect{n}'}} \prod _{i\in [D]'} T_{i;n_{i}}(x_{j,\ell})_i \prod _{i'\in [D]'} \overline{T_{i';n'_{i'}}(x_{j,\ell'})_{i'} } \right) \nonumber\\
=&\sum_{\widetilde{n}\in[M]}   \sum_{\ell,\ell' \in [m]} g^k_{\ell} g^k_{\ell'} \left({\widetilde{A}_{j}\vect{r}_{j;\widetilde{n}}} \right)_{\ell} \overline{\left({\widetilde{A}_{j}\vect{r}_{j;\widetilde{n}}} \right)_{\ell'}}, \label{equ:forfkExpect}
\end{align}

where $\widetilde{A}_{j} \in \mathbb{C}^{m \times N'}$ is the restricted random sampling matrix from (\ref{eqn:defA}) and $\vect{r}_{j;\widetilde{n}} \in \mathbb{C}^{N'}$ is the restricted vector from (\ref{def:vecentryfixed}).  Thus, we can see that
 \begin{equation*} 
 \| h_{j;k} \|^2_{L^2(\mathcal{D}_j,\nu_j)} ~=~ \sum_{\widetilde{n}\in[M]}  \big| \langle {\widetilde{A}_{j}\vect{r}_{j;\widetilde{n}}}, \vect{g}^k \rangle \big|^2 ~=~ \left\| (\vect{g}^k)^* \widetilde{A}_{j} R \right\|^2_2 ~=~ \left\| \left(\widetilde{A}_{j} R \right)^* \vect{g}^k  \right\|^2_2
 \end{equation*}
 where $R \in \mathbb{C}^{N' \times M}$ is the matrix whose $\widetilde{n}^{\rm th}$ column is $\vect{r}_{j;\widetilde{n}}$.  This yields results that $\mathbbm{E}_{\vect{g}^k} \left[ \| h_{j;k} \|^2_{L^2(\mathcal{D}_j,\nu_j)} \right] = \| \widetilde{A}_{j} R\|^2_{\rm F}$.

Now observe that $\left(\widetilde{A}_{j} R \right)^* \vect{g}^k \sim \mathcal{N}(\vect{0},U\Sigma^2U^{\ast})$, where $U\Sigma V^{\ast}$ is the SVD of $\left(\widetilde{A}_{j} R\right)^* $ and $\Sigma \in \mathbb{R}^{\min \{M,m\} \times \min \{M,m\}}$ is the diagonal matrix containing at most $\min \{M,m\}$ nonzero singular values, $\sigma_1 \geq \sigma_2 \geq \dots \geq \sigma_{\min \{M,m\}} \geq 0$, of $\widetilde{A}_{j} R$.  Let $\vect{\widetilde{g}}^k:=\Sigma V^{\ast}\vect{g}^k \sim \mathcal{N}(\vect{0},\Sigma^2)$ and note that $\|U\vect{\widetilde{g}}^k\|_2^2=\|\vect{\widetilde{g}}^k\|_2^2$. As a consequence, one can see that 
\begin{equation*}
\mathbb{P} \left[ \| h_{j;k} \|^2_{L^2(\mathcal{D}_j,\nu_j)} \geq t \right] =\mathbb{P} \left[ \| \vect{\widetilde{g}}^k \|_2^2 \geq t \right] = \mathbb{P} \left[ \sum^{\min \{M,m\}}_{\ell = 1} \sigma^2_{\ell} X_{\ell} \geq t \right]
\end{equation*}
holds for all $t \in \mathbb{R}$, where each $X_{\ell}$ is an i.i.d $\chi^2$ random variable.  Applying the Bernstein type inequality given in Proposition 5.16 in \cite{vershynin2011introduction} we deduce that
\begin{align}
\mathbb{P} \left[ \left| \| h_{j;k} \|^2_{L^2(\mathcal{D}_j,\nu_j)}  - \| \widetilde{A}_{j} R\|^2_{\rm F} \right| \geq t \right] &\leq 2 \exp\left(-a' \min\left\{ \frac{t^2}{\|\vect{\sigma}\|_4^4}, \frac{t}{\|\vect{\sigma}\|_{\infty}^2} \right\}\right) \nonumber \\
& \leq 2 \exp\left(-a' \min\left\{ \frac{t^2}{\|\widetilde{A}_{j} R\|_F^4}, \frac{t}{\|\widetilde{A}_{j} R\|_{F}^2} \right\}\right)
 \label{equ:ProbNormfk}
\end{align}
where $a' \in \mathbb{R}^+$ is an absolute constant, and $\vect{\sigma}$ is the vector containing the diagonal elements of $\Sigma$.  An application of \eqref{equ:ProbNormfk} with $t = \max \left\{ \log 80 / a' ,\sqrt{\log 80 / a'} \right\}\| \widetilde{A}_{j} R\|^2_{\rm F}$ finally tells us that 
$$\|h_{j;k} \|^2_{L^2(\mathcal{D}_j,\nu_j)} \geq \left( 1 + \max\left\{ \frac{\log 80}{a'},\sqrt{\frac{\log 80}{a'}}  \right\} \right) \| \widetilde{A}_{j} R\|^2_{\rm F}$$
will hold with probability at most $1/40$.

Turning our attention to $\| \widetilde{A}_{j} R\|^2_{\rm F} = \sum_{\widetilde{n}\in[M]} \| \widetilde{A}_{j} \vect{r}_{j;\widetilde{n}} \|^2_2$, we assert that
$$\| \widetilde{A}_{j} R\|^2_{\rm F} = \sum_{\widetilde{n}\in[M]} \| \widetilde{A}_{j} \vect{r}_{j;\widetilde{n}} \|^2_2 \geq \frac{1}{2} \sum_{\widetilde{n}\in[M]} \| \vect{r}_{j;\widetilde{n}} \|^2_2 = \frac{1}{2} \| \vect{r} \|^2_2$$  
holds since the restricted isometry constant $\delta_{2s}$ of $\widetilde{A}_{j}$ is assumed to be bounded above by $\frac{7}{16}$. Therefore, we finally get the desired probability estimate with $\alpha':= \frac{1}{2} \big( 1 + \max \big\{ \frac{\log 80}{a'}, \allowbreak \sqrt{\frac{\log 80}{a'}}  \big\} \big)$.
\end{proof}

The following lemma bounds the estimated inner products.

\begin{lemma} Suppose that $\vect{r} \in \mathbb{C}^{N}$ is $2s$-sparse, and the restricted isometry constant $\delta_{2s}$ of $\widetilde{A}_j$ satisfies $\delta_{2s}\leq\delta$ for all $j\in[D]$ where $\delta\in(0,7/16)$.  Let $k \in [L]$, $j\in [D],$ and $\widetilde{n} \in [M]$.  Then, there exists an absolute constant $\beta' \in \mathbb{R}^+$ such that 
\begin{equation}
\mathbb{P} \left[ \left| \langle h_{j;k}, T_{j;\widetilde{n}} \rangle_{(\mathcal{D}_j, \nu_j)} \right| \leq \frac{\sqrt{23}}{12} \| \vect{r}_{j;\widetilde{n}} \|_2 \text{ or } \left| \langle h_{j;k}, T_{j;\widetilde{n}} \rangle_{(\mathcal{D}_j, \nu_j)} \right| \geq {\frac{9}{4}} \| \vect{r}_{j;\widetilde{n}} \|_2  \right] \leq 0.273.
\label{lem:equ:fkInnerProfBig}
\end{equation}
\label{lem:fkInnerProfBig}
\end{lemma}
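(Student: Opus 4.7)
The overall plan is to recognize $\langle h_{j;k}, T_{j;\widetilde{n}}\rangle_{(\mathcal{D}_j,\nu_j)}$ as a scalar Gaussian whose variance is pinched around $\|\vect{r}_{j;\widetilde{n}}\|_2^2$ by the RIP hypothesis on $\widetilde{A}_j$, and then to bound its distribution via standard tail estimates for weighted $\chi^2$ variables. The algebra mirrors that already carried out in the proof of Lemma~\ref{lem:fkPresTotalEng}; only the final probabilistic estimate is new.

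First, I would substitute the definition \eqref{def:fk} of $h_{j;k}$ into the inner product and use the orthonormality of $\mathcal{B}_j$ along the $j$-th coordinate to obtain
\[
\langle h_{j;k}, T_{j;\widetilde{n}}\rangle_{(\mathcal{D}_j,\nu_j)} \;=\; \sum_{\ell\in[m]} g^k_\ell \bigl(\widetilde{A}_j \vect{r}_{j;\widetilde{n}}\bigr)_\ell \;=\; \langle \vect{y}, \vect{g}^k\rangle, \qquad \vect{y} := \widetilde{A}_j\vect{r}_{j;\widetilde{n}},
\]
which is exactly the single-column version of the sum appearing inside \eqref{equ:forfkExpect}. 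Conditional on $\mathcal{X}_j$, $\vect{y}$ is a fixed (complex) vector, and since $\vect{r}_{j;\widetilde{n}}$ is $2s$-sparse on the reduced index set $\mathcal{I}_{j;\widetilde{n}}$, the RIP assumption $\delta_{2s}(\widetilde{A}_j)\leq\delta<7/16$ sandwiches $\|\vect{y}\|_2^2$ between $\tfrac{9}{16}\|\vect{r}_{j;\widetilde{n}}\|_2^2$ and $\tfrac{23}{16}\|\vect{r}_{j;\widetilde{n}}\|_2^2$.

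Writing $\vect{y}=\vect{u}+i\vect{v}$ with $\vect{u},\vect{v}\in\mathbb{R}^m$, one has $|\langle\vect{y},\vect{g}^k\rangle|^2 = (\vect{g}^k)^T(\vect{u}\vect{u}^T+\vect{v}\vect{v}^T)\vect{g}^k$, i.e.\ a quadratic form in i.i.d.\ real standard Gaussians whose representing matrix is positive semidefinite of rank at most two with trace $\|\vect{y}\|_2^2$. Diagonalizing gives the distributional identity $|\langle\vect{y},\vect{g}^k\rangle|^2 \stackrel{d}{=} \lambda_1 Z_1^2+\lambda_2 Z_2^2$ with $Z_1,Z_2$ i.i.d.\ standard normal and $\lambda_1+\lambda_2=\|\vect{y}\|_2^2$. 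I would then bound the two tail events
\[
|\langle\vect{y},\vect{g}^k\rangle|^2 \leq \tfrac{23}{144}\|\vect{r}_{j;\widetilde{n}}\|_2^2 \quad \text{and}\quad |\langle\vect{y},\vect{g}^k\rangle|^2 \geq \tfrac{81}{16}\|\vect{r}_{j;\widetilde{n}}\|_2^2
\]
separately via Laplace-transform/MGF calculations for the weighted $\chi^2_1$ distribution, combining each with the RIP sandwich on $\|\vect{y}\|_2^2$. The specific thresholds $\tfrac{\sqrt{23}}{12}$ and $\tfrac{9}{4}$ are chosen precisely so that after this combination the two tails sum to at most $0.273$.

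The main obstacle will be the lower tail. Weighted $\chi^2_1$ combinations do not enjoy sharp subexponential concentration near the origin, and the worst case (when $\vect{v}$ is collinear with $\vect{u}$) degenerates the quadratic form to $\|\vect{y}\|_2^2 Z^2$ for a single standard Gaussian $Z$; the relevant Gaussian small-ball probability $\mathbb{P}[|Z|\leq c]$ must then be estimated either through the exact normal CDF or through a careful Laplace-transform truncation. By contrast, the upper tail is easy, since the MGF bound $\mathbb{E}[e^{s|\langle\vect{y},\vect{g}^k\rangle|^2}]\leq (1-2s\|\vect{y}\|_2^2)^{-1}$ yields exponential decay directly. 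Tuning the Chernoff/Markov parameter against the RIP sandwich so that the combined probability lands just under $0.273$ is the delicate calculation that forces—and exploits—the exact constants appearing in the lemma statement.
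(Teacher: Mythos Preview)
Your approach is correct and shares the same skeleton as the paper's: compute $\langle h_{j;k},T_{j;\widetilde n}\rangle_{(\mathcal D_j,\nu_j)}=\langle \widetilde A_j\vect r_{j;\widetilde n},\vect g^k\rangle$, pin down $\|\widetilde A_j\vect r_{j;\widetilde n}\|_2$ via the RIP sandwich $\tfrac{9}{16}\|\vect r_{j;\widetilde n}\|_2^2\le\|\widetilde A_j\vect r_{j;\widetilde n}\|_2^2\le\tfrac{23}{16}\|\vect r_{j;\widetilde n}\|_2^2$, and then control the Gaussian tails.

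The difference is in the last step. The paper does not pass through a weighted $\chi^2$ analysis at all: it simply asserts that $\langle h_{j;k},T_{j;\widetilde n}\rangle\sim\mathcal N(0,\|\widetilde A_j\vect r_{j;\widetilde n}\|_2^2)$ and reads off the numerical bound $\mathbb P[\,|Z|\le 1/3\text{ or }|Z|\ge 3\,]\le 0.273$ directly from the standard normal CDF, then substitutes the RIP bounds. Your rank-$\le 2$ decomposition $|\langle\vect y,\vect g^k\rangle|^2\stackrel{d}{=}\lambda_1 Z_1^2+\lambda_2 Z_2^2$ is the technically honest version of this when $\vect y$ may be complex, and you are right that the extremal (rank-one) case is exactly the real-Gaussian scenario the paper tacitly assumes; so your argument ultimately collapses to theirs with the same constant. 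The MGF/Laplace-transform machinery you propose for the two tails is therefore more than is needed: once you have reduced to the rank-one worst case, evaluating $\mathbb P[|Z|\le 1/3]+\mathbb P[|Z|\ge 3]$ numerically (as the paper does) is both simpler and tighter than any Chernoff optimization, and is what actually produces the constant $0.273$. In short, your route is a careful superset of the paper's; dropping the MGF detour in favor of the exact normal CDF at the end would make it match the paper's argument essentially line for line.
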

\begin{proof}
Consider the random sampling point set $\mathcal{X}_j$ to be fixed for the time being.  Recalling the definitions of $h_{j;k}$ and of $\vect{r}_{j;\widetilde{n}}$, one can see that
\begin{equation}
\langle h_{j;k}, T_{j;\widetilde{n}} \rangle_{(\mathcal{D}_j, \nu_j)} = \frac{1}{\sqrt{m}}\sum_{\ell\in[m]} g^k_{\ell} \sum_{\substack{ {\vect{n}\text{ s.t.}}\\ {n_j=\widetilde{n}}}} r_{\vect{n}}\prod _{i\in [D]'} T_{i;n_{i}}(x_{j,\ell})_i = \sum_{\ell\in[m]} g^k_{\ell} \left({\widetilde{A}_{j}\vect{r}_{j;\widetilde{n}}} \right)_{\ell}.
\label{equ:DistInnerProdfk}
\end{equation}
Looking at \eqref{equ:DistInnerProdfk} one can see that $\langle h_{j;k}, T_{j;\widetilde{n}} \rangle_{(\mathcal{D}_j, \nu_j)} \sim \mathcal{N}(0,\| \widetilde{A}_{j} \vect{r}_{j;\widetilde{n}} \|^2_2)$ and hence,
\begin{equation}
\mathbb{P} \left[ \left|\langle h_{j;k}, T_{j;\widetilde{n}} \rangle_{(\mathcal{D}_j, \nu_j)} \right| \leq \frac{\| \widetilde{A}_{j} \vect{r}_{j;\widetilde{n}} \|_2}{3} \text{ or } \left|\langle h_{j;k}, T_{j;\widetilde{n}} \rangle_{(\mathcal{D}_j, \nu_j)} \right| \geq 3\| \widetilde{A}_{j} \vect{r}_{j;\widetilde{n}} \|_2 \right] \leq 0.273
\label{equ:ProbBound}
\end{equation}
holds. Combining (\ref{equ:ProbBound}) and the assumption on $\delta_{2s}$, which yields $\frac{9}{16}\| \vect{r}_{j;\widetilde{n}} \|^2_2 \leq \| \widetilde{A}_{j} \vect{r}_{j;\widetilde{n}} \|^2_2 \leq \frac{23}{16} \| \vect{r}_{j;\widetilde{n}} \|^2_2$, establishes the desired result.
\end{proof}



\begin{mytheorem}
Let $\mathcal{H}$ be a finite set of functions $h$ whose BOS coefficient vectors are $2s$-sparse, and let $\vect{r}_h \in \mathbb{C}^N$ denote the coefficient vector for each $h \in \mathcal{H}$.  Suppose that the restricted isometry constant $\delta_{2s}$ of $\widetilde{A}_j$ satisfies $\delta_{2s}\leq\delta$ for all $j\in[D]$ where $\delta\in(0,7/16)$. Furthermore, let $p \in (0,1)$, $L \in \mathbb{N}$ be odd, and $L \geq \widetilde{\gamma} \log (DM |\mathcal{H}| / p)$ hold for a sufficiently large absolute constant $\widetilde{\gamma} \in \mathbbm{R}^+$.  Then, $\sum_{k \in [L]} E_{h,j,\widetilde{n},k} > L / 2$ simultaneously for all  $(h,j,\widetilde{n}) \in \mathcal{H} \times[D]\times [M]$ with probability at least $1 - p$.  That is, with probability at least $1 - p$, the following will hold simultaneously for each $(h,j,\widetilde{n}) \in \mathcal{H}\times [D] \times [M]$:  All three of
\begin{enumerate}
\item $\| h_{j;k} \|^2_{L^2(\mathcal{D}_j,\nu_j)} \leq \alpha' \|\vect{r}_h \|_2^2$ for the absolute constant $\alpha'$ defined in Lemma~\ref{lem:fkPresTotalEng},
\item ${\frac{9}{4}}\| (\vect{r}_h)_{j;\widetilde{n}} \|_2 \geq \left| \langle h_{j;k}, T_{j;\widetilde{n}} \rangle_{(\mathcal{D}_j, \nu_j)} \right| \geq \frac{\sqrt{23}}{12} \| (\vect{r}_h)_{j;\widetilde{n}} \|_2 $, and
\item the vector of Gaussian weights $\vect{g}^k \in \mathbbm{R}^m$ satisfying $ \frac{1}{2} m \leq \| \vect{g}^k  \|^2_2 \leq \frac{3}{2}  m$,
\end{enumerate}
will be simultaneously true for more than half of the $k \in [L]$.
\label{Thm:FiniteSetMedtoUn}
\end{mytheorem}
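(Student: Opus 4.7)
I will bound the one-sided deviation of a sum of conditionally i.i.d.\ Bernoulli indicators via a Chernoff-type inequality, then close with a union bound. Since the hypothesis of Theorem~\ref{Thm:FiniteSetMedtoUn} already includes the RIP property of every $\widetilde{A}_j$, I treat the sampling set $\mathcal{X}_j$ as fixed with that property throughout. Under this conditioning, the only remaining randomness in $h_{j;k}$ comes from the Gaussian weight vector $\vect{g}^k \in \mathbb{R}^m$, and the collection $\{\vect{g}^k\}_{k\in [L]}$ consists of independent standard Gaussians.

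\textbf{Step 1: lower bound on $\mathbb{E}\bigl[E_{h,j,\widetilde{n},k}\bigr]$.} Fix $(h,j,\widetilde{n})$ and $k \in [L]$. Applying Lemma~\ref{lem:fkPresTotalEng}, property~(1) in the definition of $E_{h,j,\widetilde{n},k}$ fails with probability at most $0.025$. Lemma~\ref{lem:fkInnerProfBig} tells us that property~(2) fails with probability at most $0.273$. For property~(3), $\|\vect{g}^k\|_2^2$ is a sum of $m$ independent $\chi^2_1$ random variables with mean $m$, so Bernstein's inequality (as in Prop.~5.16 of~\cite{vershynin2011introduction}) yields
\begin{equation*}
\mathbb{P}\!\left[\,\|\vect{g}^k\|_2^2 \notin [m/2, 3m/2]\,\right]\;\leq\; 2\exp(-c_0 m)
\end{equation*}
for an absolute constant $c_0>0$; for $m$ larger than a fixed absolute constant this is at most, say, $0.01$. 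A union bound over the three events therefore gives
\begin{equation*}
\mathbb{P}\bigl[E_{h,j,\widetilde{n},k}=0\bigr] \;\leq\; 0.025 + 0.273 + 0.01 \;=\; 0.308,
\end{equation*}
so $\mu:=\mathbb{E}\bigl[E_{h,j,\widetilde{n},k}\bigr] \geq 0.692 > \tfrac{1}{2}$ with a fixed absolute margin.

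\textbf{Step 2: Chernoff bound for fixed $(h,j,\widetilde{n})$.} Because different indices $k\in[L]$ use independent Gaussian weight vectors $\vect{g}^k$, the variables $\{E_{h,j,\widetilde{n},k}\}_{k\in[L]}$ are i.i.d.\ Bernoulli$(\mu)$ with $\mu \geq 0.692$. The standard multiplicative Chernoff bound applied to the lower tail yields an absolute constant $c_1>0$ with
\begin{equation*}
\mathbb{P}\!\left[\sum_{k\in[L]} E_{h,j,\widetilde{n},k} \leq L/2\right] \;\leq\; \exp(-c_1 L).
\end{equation*}

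\textbf{Step 3: union bound over $\mathcal{H}\times[D]\times[M]$.} A final union bound over all triples $(h,j,\widetilde{n})\in\mathcal{H}\times[D]\times[M]$ yields
\begin{equation*}
\mathbb{P}\!\left[\exists\,(h,j,\widetilde{n}):\sum_{k\in[L]} E_{h,j,\widetilde{n},k}\leq L/2\right]\leq DM|\mathcal{H}|\exp(-c_1 L),
\end{equation*}
which is at most $p$ as soon as $L\geq \widetilde{\gamma}\log(DM|\mathcal{H}|/p)$ with $\widetilde{\gamma}:=1/c_1$. Unpacking the definition of $E_{h,j,\widetilde{n},k}$, this is exactly the assertion that properties (1)--(3) all hold for more than $L/2$ values of $k$, simultaneously for every $(h,j,\widetilde{n})$.

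\textbf{Main obstacle.} There is no serious analytic obstacle left once Lemmas~\ref{lem:fkPresTotalEng} and~\ref{lem:fkInnerProfBig} are in hand; the only subtle point is keeping clear which randomness is being averaged and which is being conditioned upon. Specifically, one must condition on the already-assumed RIP event for every $\widetilde{A}_j$ so that the $\mathcal{X}_j$ are effectively fixed, and only then use the independence of the $\vect{g}^k$ across $k$ to obtain i.i.d.\ Bernoulli indicators suitable for the Chernoff inequality. A secondary bookkeeping point is ensuring that the constants in Steps~1--2 combine so that the success probability $\mu$ is bounded strictly above $1/2$ by an absolute margin --- this is exactly why the three failure probabilities $0.025$, $0.273$, and the Gaussian norm tail were chosen (and estimated) as they are in the preceding lemmas.
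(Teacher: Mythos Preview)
Your proof is correct and follows essentially the same route as the paper's: bound $\mathbb{P}[E_{h,j,\widetilde n,k}=0]$ via the two lemmas plus a Bernstein tail on $\|\vect{g}^k\|_2^2$, apply a Chernoff bound to the i.i.d.\ indicators over $k\in[L]$, then union bound over $(h,j,\widetilde n)$. The only differences are cosmetic (you use $0.01$ where the paper uses $0.03$ for the Gaussian-norm tail, giving $0.308$ versus the paper's $328/1000$), and your explicit remark about conditioning on the RIP event is a helpful clarification that the paper leaves implicit.
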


\begin{proof}
Let $h \in \mathcal{H}$, $k \in[L]$, and $\widetilde{n} \in [M]$.  The probabilities that the first and second properties fail are given in \eqref{lem:equ:fkPresTotalEng} and \eqref{lem:equ:fkInnerProfBig}, respectively. For the third property, applying the Bernstein type inequality given in Proposition 5.16 in \cite{vershynin2011introduction}, one obtains 
\begin{equation}
\mathbb{P} \left[ \left| \| \vect{g}^k  \|^2_2  - m \right| \geq \frac{m}{2} \right] \leq 2 \mathbbm{e}^{-a'' m} \leq 0.03, 
\label{equ:FiniteUnivSetupBern}
\end{equation}
where $a'' \in \mathbb{R}^+$ is an absolute constant. 

Combining \eqref{lem:equ:fkPresTotalEng}, \eqref{lem:equ:fkInnerProfBig}, \eqref{equ:FiniteUnivSetupBern} via a union bound now tell us that $\mathbb{P} \left[ E_{h,j,\widetilde{n},k} = 0 \right] \leq 328/1000$.  Utilizing the Chernoff bound (see, e.g., \cite{motwani1995randomized, arratia1989tutorial}) one now sees that
$$\mathbb{P} \left[ \sum_{k \in [L]} E_{h,j,\widetilde{n},k} < L / 2 \right]= \mathbb{P} \left[ \sum_{k \in [L]} (1 - E_{h,j,\widetilde{n},k}) > L/2\right] < \mathbbm{e}^{-L / \bar{\gamma}} \leq \frac{p}{DM |\mathcal{H}|}$$
for an absolute constant $\bar{\gamma}\in \mathbb{R}^+$, where the last inequality follows by choosing $\widetilde{\gamma}=\bar{\gamma}$ in the assumption.  Applying the union bound over all choices of $(h,j,\widetilde{n}) \in \mathcal{H} \times [D]\times [M]$ now establishes the desired result.
\end{proof}


\begin{mytheorem}
Let $\mathcal{H}_{2s}$ be the set of all functions $h$ whose coefficient vectors are $2s$-sparse, and let $\vect{r}_h \in \mathbb{C}^N$ denote the coefficient vector for each $h \in \mathcal{H}_{2s}$. Suppose that the restricted isometry constant $\delta_{2s}$ of $\widetilde{A}_j$ satisfies $\delta_{2s}\leq\delta$ for all $j\in[D]$ where $\delta\in(0,7/16)$. Furthermore, let $p \in \left( 0,1 \right)$, $L \in \mathbb{N}$ be odd, and assume that $L \geq \gamma' s \cdot d \cdot \log \left( \frac{DM}{\sqrt[d]{sd~p^{1/s}}} \right)$ for sufficiently large absolute constant $ \gamma' \in \mathbbm{R}^+$.  Then, with probability greater than $1 - p$, one has
$\sum_{k \in [L]} E_{h,j,\widetilde{n},k} > L / 2$ simultaneously for all  $(h,j,\widetilde{n}) \in \mathcal{H} \times[D] \times [M]$.
Consequently, with probability greater than $1 - p$,  it will hold that for all choices of $(h,j,\widetilde{n}) \in \mathcal{H}_{2s}\times [D] \times [M]$ both 
\begin{enumerate}
\item $\| h_{j;k} \|^2_{L^2(\mathcal{D}_j,\nu_j)} \leq (\alpha' + 1) \|\vect{r}_h \|_2^2$ for the absolute constant $\alpha'$ defined in Lemma~\ref{lem:fkPresTotalEng}, and
\item $ \frac{9}{2} \| (\vect{r}_h)_{j;\widetilde{n}} \|_2\geq \left| \langle h_{j;k}, T_{j;\widetilde{n}} \rangle_{(\mathcal{D}_j, \nu_j)} \right| \geq \frac{1}{3} \| (\vect{r}_h)_{j;\widetilde{n}} \|_2 $
\end{enumerate}
are true simultaneously for more than half of the $k \in [L]$.
\label{Thm:FiniteSetMedtoUn2}
\end{mytheorem}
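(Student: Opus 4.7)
The plan is to upgrade Theorem~\ref{Thm:FiniteSetMedtoUn} from a finite family of $2s$-sparse functions to the continuous class $\mathcal{H}_{2s}$ via a covering argument. The slack between the tight constants $\{\alpha',\sqrt{23}/12,9/4\}$ of Theorem~\ref{Thm:FiniteSetMedtoUn} and the relaxed constants $\{\alpha'+1,1/3,9/2\}$ here is what will absorb a small uniform perturbation. Every conclusion being homogeneous of degree~$0$ in $\vect{r}_h$, I first reduce to $\|\vect{r}_h\|_2=1$. Then I construct an $\epsilon$-net $\mathcal{R}^{\epsilon}$ of the $2s$-sparse unit sphere in $\mathbb{C}^N$ with $|\mathcal{R}^{\epsilon}|\le\binom{N}{2s}(1+2/\epsilon)^{4s}$, identify it with a finite family $\mathcal{H}^{\epsilon}$ of functions, and apply Theorem~\ref{Thm:FiniteSetMedtoUn} with failure probability $p/2$. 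That step requires $L\gtrsim \log(DM\,|\mathcal{H}^{\epsilon}|/p)\lesssim sd\log(DM)+s\log(1/\epsilon)+\log(1/p)$, which matches the announced bound once $\epsilon$ is chosen below and $\log N\lesssim d\log(DM/d)$ is substituted.

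The second step transfers the three inequalities from each $h^{\epsilon}\in\mathcal{H}^{\epsilon}$ to every nearby $h\in\mathcal{H}_{2s}$. Writing $\vect{r}_h=\vect{r}_{h^{\epsilon}}+\Delta$ with $\|\Delta\|_2\le\epsilon$ and $\Delta$ at most $4s$-sparse, linearity yields $\langle h_{j;k},T_{j;\widetilde n}\rangle-\langle h^{\epsilon}_{j;k},T_{j;\widetilde n}\rangle=\langle\vect{g}^k,\widetilde{A}_j\Delta_{j;\widetilde n}\rangle$ and an analogous identity for $\|h_{j;k}-h^{\epsilon}_{j;k}\|^2_{L^2(\mathcal{D}_j,\nu_j)}$. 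Both perturbations are controlled by a single auxiliary event of probability at least $1-p/2$: for all $(j,k)\in[D]\times[L]$ and all $4s$-sparse $\vect{v}\in\mathbb{C}^N$ with $\|\vect{v}\|_2\le 1$,
\[
|\langle\vect{g}^k,\widetilde{A}_j\vect{v}\rangle|\;\le\;C\sqrt{s\log(N/s)+\log(DML/p)}.
\]
Splitting a $4s$-sparse $\vect{v}$ into two disjoint $2s$-sparse pieces and, for each fixed $2s$-support $T$, applying a $\chi^2$ tail to $\|(\widetilde{A}_j)_T^{*}\vect{g}^k\|_2$ (whose covariance has operator norm at most $1+\delta$ by the RIP $\delta_{2s}\le\delta$), followed by a union bound over the $\binom{N}{2s}$ supports and over $(j,k)$, proves the claim. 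A parallel Hanson--Wright argument applied to $\vect{g}^{k\ast}\widetilde{A}_jR_gR_g^{*}\widetilde{A}_j^{*}\vect{g}^k$ furnishes an analogous uniform upper bound on $\|g_{j;k}\|^2_{L^2(\mathcal{D}_j,\nu_j)}$ over $4s$-sparse $g$ with $\|\vect{r}_g\|_2\le 1$. Choosing $\epsilon\asymp 1/\sqrt{s\log(N/s)}$, which only multiplies $L$ by logarithmic factors, makes both perturbations uniformly smaller than half the slack between the tight and relaxed constants.

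The main obstacle is the multiplicative lower bound $|\langle h_{j;k},T_{j;\widetilde n}\rangle|\ge\tfrac{1}{3}\|(\vect{r}_h)_{j;\widetilde n}\|_2$, which must persist as $\|(\vect{r}_h)_{j;\widetilde n}\|_2\downarrow 0$; an $O(\epsilon)$ absolute perturbation from a single net on $\vect{r}_h$ would instantly violate it. The cure is to invoke homogeneity once more at the atomic level: write $(\vect{r}_h)_{j;\widetilde n}=\tau\vect{u}$ with $\tau=\|(\vect{r}_h)_{j;\widetilde n}\|_2$ and $\vect{u}$ a $2s$-sparse unit vector, so the desired inequality becomes $|\langle\vect{g}^k,\widetilde{A}_j\vect{u}\rangle|\ge 1/3$; net the unit $\vect{u}$'s at scale $\epsilon'\asymp 1/\sqrt{s\log(N/s)}$, apply Lemma~\ref{lem:fkInnerProfBig} to each atomic element, and extend by the uniform perturbation bound above. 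The degenerate case $\tau=0$ is automatic because the Gaussian then vanishes identically. Intersecting the net event of Theorem~\ref{Thm:FiniteSetMedtoUn}, the auxiliary Gaussian event, and the Chernoff concentration bound on the good-$k$ set at each net element, and finally union-bounding over $(j,\widetilde n)\in[D]\times[M]$, delivers the announced probability $1-p$ and the required size $L\ge\gamma' sd\log(DM/\sqrt[d]{sd\,p^{1/s}})$ after substituting $N\le\binom{D}{d}M^d$.
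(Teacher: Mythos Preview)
Your overall strategy --- $\epsilon$-net plus perturbation --- matches the paper, but the paper's execution is considerably simpler and explains why Property~3 was built into Theorem~\ref{Thm:FiniteSetMedtoUn} in the first place. The paper uses a \emph{single} net $\mathcal{R}^\epsilon$ on the $2s$-sparse unit sphere with $\epsilon\asymp 1/\sqrt{m}$ (not $1/\sqrt{s\log(N/s)}$), applies Theorem~\ref{Thm:FiniteSetMedtoUn} to the associated finite family $\mathcal{H}^\epsilon$, and then controls \emph{every} perturbation deterministically on the good event via Cauchy--Schwarz: for any good $k$ one has $|\langle\vect{g}^k,\widetilde A_j\Delta_{j;\widetilde n}\rangle|\le\|\vect{g}^k\|_2\,\|\widetilde A_j\Delta_{j;\widetilde n}\|_2\le\sqrt{3m/2}\cdot\sqrt{1+\delta}\,\|\Delta_{j;\widetilde n}\|_2$, and similarly $\|(h-h')_{j;k}\|_{L^2}^2=\sum_{\widetilde n}|\langle\vect g^k,\widetilde A_j\Delta_{j;\widetilde n}\rangle|^2\le\tfrac{9}{4}m\|\Delta\|_2^2$. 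Your auxiliary uniform Gaussian event and the Hanson--Wright step are therefore unnecessary --- Property~3 of Theorem~\ref{Thm:FiniteSetMedtoUn} was included precisely so that this crude bound is available on the good $k$'s. For the multiplicative lower bound your instinct to rescale to the slice norm is exactly right, but you need no second net: the normalized slice $(\vect r_h)_{j;\widetilde n}/\|(\vect r_h)_{j;\widetilde n}\|_2$ is itself a $2s$-sparse unit vector and hence already $\epsilon$-close to some $\vect r'\in\mathcal{R}^\epsilon$; Theorem~\ref{Thm:FiniteSetMedtoUn} applied at that second net point $(h',j,\widetilde n)$ directly yields $|\langle h'_{j;k},T_{j;\widetilde n}\rangle|\ge\tfrac{\sqrt{23}}{12}\|\vect r'_{j;\widetilde n}\|_2$ for more than half the $k$'s, and the same Cauchy--Schwarz perturbation closes the gap to $1/3$. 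What your heavier route buys is an $m$-independent perturbation constant; what the paper's route buys is brevity, no extra probabilistic events, and a transparent reason for carrying the $\|\vect g^k\|_2$ bound through Theorem~\ref{Thm:FiniteSetMedtoUn}.
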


\begin{proof}
Define $\mathcal{R}^\epsilon \subset \mathbb{C}^{N}$ as a finite $\epsilon$-cover of all $2s$-sparse coefficient vectors $\vect{r} \in \mathbb{C}^{N}$ with $\| \vect{r} \|_2 = 1$, together with $\vect{0}$, where $N = {D \choose d}M^d$ and $\epsilon \in (0,1)$.  Such covers exist of cardinality $|\mathcal{R}^\epsilon|  \leq  \left( \frac{\mathbbm{e}N}{2s} \right)^{2s} \left(1+ \frac{2}{\epsilon} \right)^{2s}$ (see, e.g., Appendix C of \cite{foucart2013mathematical}).  Define $\mathcal{H}^\epsilon$ as the set of functions  corresponding to the $2s$-sparse coefficient vectors in $\mathcal{R}^\epsilon$.  Assume that for $\mathcal{H}$ = $\mathcal{H}^\epsilon$ and all choices of $(h,j,\widetilde{n}) \in \mathcal{H}^{\epsilon} \times [D]\times [M]$, Properties 1 -- 3 of Theorem~\ref{Thm:FiniteSetMedtoUn} will hold for more than half of the $k \in [L]$.  By the theorem, this event will happen with probability at least $1-p$. We will now prove that under this assumption both Properties 1 and 2 above will hold as desired.

Let $\widetilde{n} \in [M]$, $j\in [D]$, consider $h \in \mathcal{H}_{2s}$ with coefficient vector $\vect{r}=\vect{r}_h$, and let $\tau := \| \vect{r} \|_2^2$.  Then, there exists an $h' \in \mathcal{H}^\epsilon$ with coefficient vector $\vect{r}' \in \mathcal{R}^\epsilon$ such that both $\| \vect{r}' \|_2 = 1$ and $\left\| \vect{r} - \tau \vect{r}' \right\|_2 \leq \epsilon \tau$ hold.  Finally, let $k \in [L]$ be one of the values for which Properties 1 -- 3 of Theorem~\ref{Thm:FiniteSetMedtoUn} are simultaneously true for $(h',j,\widetilde{n})$.  We will begin by establishing Property 1 above for $h$, $j$ and $k$.  Using \eqref{def:fk} we have that
\begin{align}
\left \| h_{j;k} \right \|_{L^2(\mathcal{D}_j,\nu_j)} &= \left \| \frac{1}{\sqrt{m}}\sum_{\ell\in[m]} g^k_{\ell} \sum_{\vect{n}\in {\rm supp}(\vect{r})}   r_{\vect{n}}T_{j;n_j}(x)\prod _{i\in [D]'} T_{i;n_{i}}(x_{j,\ell})_i \right \|_{L^2(\mathcal{D}_j,\nu_j)} \nonumber \\
&\leq \tau \left \| h'_{j;k} \right \|_{L^2(\mathcal{D}_j,\nu_j)} \\
&+ \left \| \frac{1}{\sqrt{m}}\sum_{\ell\in [m]} g^k_{\ell} \sum_{\vect{n}\in {\rm supp}(\vect{r})}   \left( r_{\vect{n}} - \tau r'_{\vect{n}} \right)T_{j;n_j}(x)\prod _{i\in [D]'} T_{i;n_{i}}(x_{j,\ell})_i \right \|_{L^2(\mathcal{D}_j,\nu_j)} \nonumber \\
& \leq \tau \sqrt{\alpha'} \| \vect{r}' \|_2 +  \left \| \left( h-  \tau h' \right)_{j;k} \right \|_{L^2(\mathcal{D}_j,\nu_j)} \nonumber \\
& = \tau \sqrt{\alpha'} +  \left \| \left( h-  \tau h' \right)_{j;k} \right \|_{L^2(\mathcal{D}_j,\nu_j)},
\label{equ:UniProofEq1}
\end{align}
where the last inequality follows from the first property of Theorem~\ref{Thm:FiniteSetMedtoUn} holding for $h'$.

Repeating the expansion from the proof of Lemma~\ref{lem:fkPresTotalEng} for $\left \| \left( h-  \tau h' \right)_{j;k} \right \|^2_{L^2(\mathcal{D}_j,\nu_j)}$, one obtains
\begin{align*}
\left \| \left( h-  \tau h' \right)_{j;k} \right \|^2_{L^2(\mathcal{D}_j,\nu_j)} &= \sum_{\widetilde{n}\in[M]}  \bigg| \left\langle {\widetilde{A}_{j}\left( \vect{r} - \tau \vect{r}' \right)_{j;\widetilde{n}}}, \vect{g}^k \right\rangle \bigg|^2\\
&\leq \sum_{\widetilde{n}\in[M]} \left \| \widetilde{A}_{j}\left( \vect{r} - \tau \vect{r}' \right)_{j;\widetilde{n}} \right\|^2_2 \left \| \vect{g}^k  \right\|^2_2 \\
&\leq \frac{9}{4}m \sum_{\widetilde{n}\in[M]}  \left \| \left( \vect{r} - \tau \vect{r}' \right)_{j;\widetilde{n}} \right\|^2_2
\end{align*}
where the last inequality follows from the third property of Theorem~\ref{Thm:FiniteSetMedtoUn}, and $\widetilde{A}_{j}$ having $\delta_{2s} \leq \frac{7}{16}$.  Continuing, we can see that
$$ \left \| \left( h-  \tau h' \right)_{j;k} \right \|^2_{L^2(\mathcal{D}_j,\nu_j)} \leq \frac{9}{4} m \left \| \left( \vect{r} - \tau \vect{r}' \right) \right\|^2_2 \leq \frac{9}{4} m \tau^2 \epsilon^2.$$
Combining this expression with \eqref{equ:UniProofEq1} we now learn that
$$\left \| h_{j;k} \right \|_{L^2(\mathcal{D}_j,\nu_j)} \leq \tau \left( \sqrt{\alpha'} + \frac{3}{2}\epsilon \sqrt{ m}\right) = \| \vect{r} \|_2 \left( \sqrt{\alpha'} + \frac{3}{2} \epsilon \sqrt{ m}\right).$$
Making sure to use, e.g., an $\epsilon \leq \left (6\sqrt{\alpha' m} \right)^{-1}$ ensures property one.

Turning our attention to establishing Property 2 above for $h$, $\widetilde{n}$, and $k$, we now choose an $h' \in \mathcal{H}^\epsilon$ whose coefficient vector $\vect{r}' \in \mathcal{R}^\epsilon$ has $\| \vect{r}' \|_2 
= 1$, and also satisfies 

\begin{equation}
\label{eq:rrprime}
\left\| \vect{r}_{j;\widetilde{n}} - \tau' \vect{r}'
\right\|_2 \leq \epsilon \tau'
\end{equation}
 for $\tau' := \| \vect{r}_{j;\widetilde{n}} \|_2$.  
 Note that all nonzero entries of $\vect{r}'_{j;\widetilde{n}}$ agree with those of $\vect{r}'$, the latter vector only has certain additional nonzero entries in locations where $\vect{r}'_{j;\widetilde{n}}$ and also $\vect{r}_{j;\widetilde{n}}$ vanish. Consequently, replacing $\vect{r}'$ by $\vect{r}'_{j;\widetilde{n}}$ makes the left hand side of \eqref{eq:rrprime} smaller, and one obtains that
 \begin{equation}
 \label{eq:rrprime2}
 \left\| \vect{r}_{j;\widetilde{n}} - \tau' \vect{r}'_{j;\widetilde{n}}.
 \right\|_2 \leq \epsilon \tau'
 \end{equation}
 
 From \eqref{def:fk} one can see that
\begin{align*}
\left| \langle h_{j;k}, T_{j;\widetilde{n}} \rangle_{(\mathcal{D}_j, \nu_j)} \right| &\geq \left| \langle \tau' h'_k, T_{j;\widetilde{n}} \rangle_{(\mathcal{D}_j, \nu_j)} \right| - \left| \left \langle \left(h_{j;k}- \tau' h'_k \right), T_{j;\widetilde{n}} \right \rangle_{(\mathcal{D}_j, \nu_j)} \right|\\
&\geq \frac{\sqrt{23}}{12}{\tau'} \| \vect{r}'_{j;\widetilde{n}} \|_2 - \left| \sum_{\ell\in[m]} g^k_{\ell} \left({\widetilde{A}_{j} \left(\vect{r} - \tau' \vect{r}' \right)_{j;\widetilde{n}}} \right)_{\ell} \right|.
\end{align*}
where the last inequality follows from the second property of Theorem~\ref{Thm:FiniteSetMedtoUn} holding for $h'$.  Continuing using \eqref{eq:rrprime2} we have that
\begin{align*}
\left| \langle h_{j;k}, T_{j;\widetilde{n}} \rangle_{(\mathcal{D}_j, \nu_j)} \right| &\geq \frac{\sqrt{23}}{12} \left( \| \vect{r}_{j;\widetilde{n}} \|_2 - \| (\tau' \vect{r}' - \vect{r})_{j;\widetilde{n}} \|_2 \right) - \bigg| \left\langle {\widetilde{A}_{j}\left( \vect{r} - \tau' \vect{r}' \right)_{j;\widetilde{n}}}, \vect{g}^k \right\rangle \bigg| \\
&\geq \frac{\sqrt{23}}{12} \| \vect{r}_{j;\widetilde{n}} \|_2 - \frac{\sqrt{23}}{12}{\epsilon \tau'}  - \bigg| \left\langle {\widetilde{A}_{j}\left( \vect{r} - \tau' \vect{r}' \right)_{j;\widetilde{n}}}, \vect{g}^k \right\rangle \bigg| \\
&\geq \frac{\sqrt{23}}{12} \| \vect{r}_{j;\widetilde{n}} \|_2 - \frac{\sqrt{23}}{12}{\epsilon \tau'}  - \left \| \widetilde{A}_{j}\left( \vect{r} - \tau' \vect{r}' \right)_{j;\widetilde{n}} \right\|_2 \left \| \vect{g}^k  \right\|_2 \\
&\geq \frac{\sqrt{23}}{12} \| \vect{r}_{j;\widetilde{n}} \|_2 - \frac{\sqrt{23}}{12}{\epsilon \tau'}  - \frac{3}{2}\sqrt{ m} \left \|\left( \vect{r} - \tau' \vect{r}' \right)_{j;\widetilde{n}} \right\|_2\\
&= \frac{1}{3} \| \vect{r}_{j;\widetilde{n}} \|_2 \left( \frac{\sqrt{23}}{4} - \frac{\sqrt{23}\epsilon}{4} - \frac{9}{2}\epsilon \sqrt{ m} \right).
\end{align*}
On the other hand,
\begin{align*}
\left| \langle h_{j;k}, T_{j;\widetilde{n}} \rangle_{(\mathcal{D}_j, \nu_j)} \right| &\leq \left| \langle \tau' h'_{j;k}, T_{j;\widetilde{n}} \rangle_{(\mathcal{D}_j, \nu_j)} \right| + \left| \left \langle \left(h_{j;k}- \tau' h'_{j;k} \right), T_{j;\widetilde{n}} \right \rangle_{(\mathcal{D}_j, \nu_j)} \right|\\
&\leq \frac{9}{4}\tau'  \| \vect{r}'_{j;\widetilde{n}} \|_2 + \left| \sum_{\ell\in[m]} g^k_{\ell} \left({\widetilde{A}_{j} \left(\vect{r} - \tau' \vect{r}' \right)_{j;\widetilde{n}}} \right)_{\ell} \right|.
\end{align*}
As above, we obtain using \eqref{eq:rrprime2} that
\begin{align*}
\left| \langle h_{j;k}, T_{j;\widetilde{n}} \rangle_{(\mathcal{D}_j, \nu_j)} \right| &\leq \frac{9}{4} \left( \| \vect{r}_{j;\widetilde{n}} \|_2 + \| (\tau' \vect{r}' - \vect{r})_{j;\widetilde{n}} \|_2 \right) + \bigg| \left\langle {\widetilde{A}_{j}\left( \vect{r} - \tau' \vect{r}' \right)_{j;\widetilde{n}}}, \vect{g}^k \right\rangle \bigg| \\
&\leq  \frac{9}{4} \| \vect{r}_{j;\widetilde{n}} \|_2 +  \frac{9}{4}\epsilon \tau'  + \bigg| \left\langle {\widetilde{A}_{j}\left( \vect{r} - \tau' \vect{r}' \right)_{j;\widetilde{n}}}, \vect{g}^k \right\rangle \bigg| \\
&\leq  \frac{9}{4} \| \vect{r}_{j;\widetilde{n}} \|_2 +  \frac{9}{4}\epsilon \tau'  + \left \| \widetilde{A}_{j}\left( \vect{r} - \tau' \vect{r}' \right)_{j;\widetilde{n}} \right\|_2 \left \| \vect{g}^k  \right\|_2 \\
&\leq \frac{9}{4} \| \vect{r}_{j;\widetilde{n}} \|_2+  \frac{9}{4}\epsilon \tau' + \frac{3}{2}\sqrt{ m} \left \|\left( \vect{r} - \tau' \vect{r}' \right)_{j;\widetilde{n}} \right\|_2\\
&= \frac{9}{4} \| \vect{r}_{j;\widetilde{n}} \|_2 \left(1+ \epsilon + \frac{2}{3} \epsilon  \sqrt{ m } \right).
\end{align*}
Once again, making sure to use, e.g., an $\epsilon \leq \left (6\sqrt{\alpha' m} \right)^{-1}$ will now ensure property two for $h$ as well.
\end{proof}


\begin{lemma}
Let $\mathcal{H}_{2s}$ be the set of all functions $h$ whose coefficient vectors are $2s$-sparse, and let $\vect{r}_h \in \mathbb{C}^N$ denote the coefficient vector for each $h \in \mathcal{H}_{2s}$.  Furthermore, let $\delta\in(0,7/16)$, $p \in \left( 0,1 \right)$, $L \in \mathbb{N}$ be odd, and assume that $m \geq \widetilde{\beta}' K^2 \delta^{-2}s \max\big\{ d \log^2(4s) \log(9m) \allowbreak \log \big( \frac{8\mathbbm{e}(D-1)DM}{d} \big),   \log\big(\frac{2D}{p}\big) \big\}$ and $L \geq \gamma' s \cdot d \cdot \log \left( \frac{DM}{\sqrt[d]{sd~(p/2)^{1/s}}} \right)$ for sufficiently large absolute constants $\widetilde{\beta}', \gamma' \in \mathbbm{R}^+$.  Then, with probability greater than $1 - p$, all of the following will hold for all $(h,j,\widetilde{n}) \in \mathcal{H}_{2s}\times [D] \times [M]$:  Both 
\begin{enumerate}
\item $\| h_{j;k} \|^2_{L^2(\mathcal{D}_j,\nu_j)} \leq (\alpha' + 1) \|\vect{r}_h \|_2^2$ for the absolute constant $\alpha'$ defined in Lemma~\ref{lem:fkPresTotalEng}, and
\item $ \frac{9}{2} \| (\vect{r}_h)_{j;\widetilde{n}} \|_2\geq \left| \langle h_{j;k}, T_{j;\widetilde{n}} \rangle_{(\mathcal{D}_j, \nu_j)} \right| \geq \frac{1}{3} \| (\vect{r}_h)_{j;\widetilde{n}} \|_2 $
\end{enumerate}
will be simultaneously true for more than half of the $k \in [L]$.
\label{Thm:FiniteSetMedtoUn3}
\end{lemma}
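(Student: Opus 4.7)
The plan is to obtain Lemma~\ref{Thm:FiniteSetMedtoUn3} as an essentially immediate consequence of the two earlier results already in place, namely Lemma~\ref{lem:RIPforD} (which controls the event that every restricted sampling matrix $\widetilde{A}_j$ has the desired RIP) and Theorem~\ref{Thm:FiniteSetMedtoUn2} (which supplies the uniform statement about $\|h_{j;k}\|_{L^2(\mathcal{D}_j,\nu_j)}$ and $|\langle h_{j;k},T_{j;\widetilde{n}}\rangle_{(\mathcal{D}_j,\nu_j)}|$ under the assumption that this RIP event holds). The only real work is bookkeeping on the probabilities and absolute constants so that the hypotheses on $m$ and $L$ stated in the lemma deliver a combined failure probability of at most $p$.

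First I would apply Lemma~\ref{lem:RIPforD} with failure probability $p/2$ in place of $p$. Its hypothesis then reads
\[
 m \;\geq\; a K^2 \delta^{-2}s \max\!\left\{ d \log^2(4s)\log(9m)\log\!\left(\tfrac{8e(D-1)DM}{d}\right),\; \log\!\left(\tfrac{2D}{p}\right)\right\},
\]
which is implied by the bound assumed in the present lemma provided $\widetilde{\beta}'$ is chosen at least as large as the universal constant $a$ (and large enough to absorb the change from $\log(D/p)$ to $\log(2D/p)$). This yields the good event $\mathcal{E}_{\rm RIP}$ that every $\widetilde{A}_j$, $j\in[D]$, has restricted isometry constant $\delta_{2s}\leq\delta<7/16$, with $\mathbb{P}[\mathcal{E}_{\rm RIP}]\geq 1-p/2$.

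Next, conditioning on $\mathcal{E}_{\rm RIP}$, I would apply Theorem~\ref{Thm:FiniteSetMedtoUn2} with failure probability $p/2$. Its lower bound on $L$ becomes
\[
 L \;\geq\; \gamma' s\,d\,\log\!\left(\tfrac{DM}{\sqrt[d]{sd\,(p/2)^{1/s}}}\right),
\]
which is precisely the hypothesis made on $L$ in the lemma. Theorem~\ref{Thm:FiniteSetMedtoUn2} then yields that, conditional on $\mathcal{E}_{\rm RIP}$, with probability at least $1-p/2$ both Property 1 and Property 2 hold simultaneously for more than half of the indices $k\in[L]$, uniformly over every triple $(h,j,\widetilde{n})\in\mathcal{H}_{2s}\times[D]\times[M]$. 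A union bound combining the at most $p/2$ failure probability of $\mathcal{E}_{\rm RIP}$ with the at most $p/2$ conditional failure probability of the uniform statement yields a total failure probability bounded by $p$, which is exactly what the lemma asserts.

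Since the deep content (the Bernstein-type estimates in Lemmas~\ref{lem:fkPresTotalEng} and~\ref{lem:fkInnerProfBig}, the Chernoff amplification in Theorem~\ref{Thm:FiniteSetMedtoUn}, and the covering-net extension in Theorem~\ref{Thm:FiniteSetMedtoUn2}) has already been established, I do not anticipate a genuine obstacle here; the only care needed is to verify that the constants $\widetilde{\beta}'$ and $\gamma'$ in the hypotheses can be selected independently of $D,M,s,d,K,\delta,p$ so as to simultaneously majorize the constants implicit in Lemma~\ref{lem:RIPforD} and in Theorem~\ref{Thm:FiniteSetMedtoUn2} after the $p\mapsto p/2$ substitution. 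This is routine: each constant can be doubled (or taken to be the maximum of $a$ and the analogous constant from Theorem~\ref{Thm:FiniteSetMedtoUn2}) without affecting the asymptotic form of either bound.
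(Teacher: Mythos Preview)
Your proposal is correct and matches the paper's own proof almost exactly: the paper also applies Lemma~\ref{lem:RIPforD} with failure probability $p/2$ to obtain the RIP event, then invokes Theorem~\ref{Thm:FiniteSetMedtoUn2} (whose probability statement is already conditional on RIP) with failure probability $p/2$, and combines via $\mathbb{P}[A\cap B]=\mathbb{P}[A\mid B]\mathbb{P}[B]\geq(1-p/2)^2\geq 1-p$. The only cosmetic difference is that the paper writes the combination as a product of probabilities rather than as the union bound you use; both yield the same conclusion.
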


\begin{proof}
Let $A$ be the event that for all $(h,j,\widetilde{n}) \in \mathcal{H}_{2s}\times [D] \times [M]$, the properties 1 and 2 in Theorem  \ref{Thm:FiniteSetMedtoUn2} simultaneously hold for more than half of the $k \in [L]$, and let $B$ be the event that the restricted isometry constant $\delta_{2s}$ of $\widetilde{A}_j$ satisfies $\delta_{2s}\leq\delta$ for all $j\in[D]$.
By the Theorem \ref{Thm:FiniteSetMedtoUn2} and Lemma \ref{lem:RIPforD} with properly chosen parameters including $L$ and $m$, both $\mathbb{P}\left[A~ \big|~B\right]$ and $\mathbb{P}[B]$ are greater than $1 - p/2$.  
We obtain, by Bayes' theorem,
\begin{equation*}
1-p \leq \left(1-\frac{p}{2}\right) \left(1-\frac{p}{2}\right)\leq \mathbb{P} \left[ A ~ \big|~ B \right] \mathbb{P} \left[ B \right] = \mathbb{P} [A \cap B]\leq P[A],
\end{equation*}
which establishes the desired result.
\end{proof}

The results from Lemma \ref{Thm:FiniteSetMedtoUn3} can be exploited in our algorithm as follows. In the entry identification, by taking the median over $k\in[L]$ of $\left| \langle h_{j;k}, T_{j;\widetilde{n}} \rangle_{(\mathcal{D}_j, \nu_j)} \right|$, we get a nonzero value if $ \left\| (\vect{r}_h)_{j;\widetilde{n}} \right\|_2$ is nonzero and a zero value if  $ \left\| (\vect{r}_h)_{j;\widetilde{n}} \right\|_2$ is zero, especially due to the second property in Lemma \ref{Thm:FiniteSetMedtoUn3} being satisfied for more the half of $k \in [L]$. Thus, we store all those $\widetilde{n}$ with nonzero median value in $\mathcal{N}_j$. On the other hand, the summation over $\widetilde{n}\in[M]$ of ${\rm median}_{k} \left| \langle h_{j;k}, T_{j;\widetilde{n}} \rangle_{(\mathcal{D}_j, \nu_j)} \right|$ can be also used for the halting criterion in our algorithm. Although $\mathcal{O}(\|\vect{c}\|_2/\eta)$ iterations guarantee the desired precision, it is not necessary to repeat the iteration if the residual $\vect{r}$ already has small energy. For any $j\in[D]$, if 
\begin{equation*}
\sum_{\widetilde{n} \in [M]} \left |{\rm median}_{k}| \langle h_{j;k}, T_{j;\widetilde{n}} \rangle_{(\mathcal{D}_j, \nu_j)}| \right|^2 \leq \frac{1}{9} \eta^2,
\end{equation*}
then $\|\vect{r}\|_2 \leq \eta$ by using the lower bound in the Property 2 of Lemma \ref{Thm:FiniteSetMedtoUn3}.



\subsection{Pairing}
\label{sec:pairing}

In the entry identification, we can find at most $2s$ entries belonging to $\mathcal{N}^t_j := \{ {n}_j ~|~ \vect{n} \in {\rm supp}(\vect{r}) \} \subset [M]$ for each $j\in[D]$. 
Now, the question is how to combine the entries of each $\mathcal{N}_j  \supseteq \mathcal{N}_j \cap \mathcal{N}^t_j$ in order to correctly identify the corresponding energetic elements of ${\rm supp}({\vect{r} })$ efficiently.
In order to do this, in the pairing process briefly introduced in Section \ref{secPairing}, we successively build up the prefix set $\mathcal{P}_{j}$ of the energetic pairs for all $j\in [D]\setminus \{0\}$ such that $\mathcal{P}_{j} \supset \left\{\widetilde{\vect{n}}\in \mathcal{P}_{j-1} \times \mathcal{N}_j ~\big|~ \|\vect{r}_{j;(\widetilde{\vect{n}},\cdots)}\|_2^2 \geq \frac{\|\vect{r}\|_2^2}{\alpha^2 s} \right\}$ with the initialization of $\mathcal{P}_0=\mathcal{N}_0$. The prefix set $\mathcal{P}_j$ contains only $2s$ pairs throwing out the other pairs with smaller energy for each $j\in[D]\setminus \{0\}$ so that $\mathcal{P}_{D-1} \supset \left\{\widetilde{\vect{n}}\in {\rm supp}(\vect{r}) ~\big|~ |{r}_{\widetilde{\vect{n}}}|^2 \geq \frac{\|\vect{r}\|_2^2}{\alpha^2 s} \right\}$. From (\ref{equ:EngEstPairing}), the energy $\left\| \vect{r}_{j;(\widetilde{\vect{n}},\cdots)} \right\|_2^2$ corresponding to each $\widetilde{\vect{n}} \in[M]^{j+1}$ has the following equality,
\begin{equation}
\left\| \vect{r}_{j;(\widetilde{\vect{n}},\cdots)} \right\|_2^2=\left\|  \langle h, {T}_{j;\widetilde{\vect{n}} }\rangle_{L^2 \left( \times_{i\in[j+1]} \mathcal{D}_i,  \otimes_{i\in[j+1]} \nu_i  \right)} \right\|^2_{L^2 \left( \mathcal{D}''_j,  \vect{\nu}''_j \right)}.
\label{eqnEest}
\end{equation}
The energy is estimated in Algorithm \ref{alg:Pairing} by using the following estimator $E_{j;(\widetilde{\vect{n}},\cdots)}$ defined as
\begin{equation}
E_{j;(\widetilde{\vect{n}},\cdots)}:=\frac{1}{m_2}\sum_{k\in[m_2]} \left| \frac{1}{m_1}\sum_{\ell\in[m_1]} h(\vect{w}_{j,\ell},\vect{z}_{j,k}) \overline{{T}_{j;\vect{\widetilde{n}}}(\vect{w}_{j,\ell})}\right|^2\nonumber\\
\label{eqnEest2}
\end{equation}
which approximates the right hand side of (\ref{eqnEest}) by using only a finite evaluations of $h$. Those sampling point sets $\mathcal{W}_{j}\times \mathcal{Z}_{j}$ for all $j\in [D]\setminus\{0\}$ are constructed from  $\mathcal{W}_{j}:=\{\vect{w}_{j,\ell}\}_{\ell\in[m_1]}$ and $\mathcal{Z}_{j}:=\{\vect{z}_{j,k}\}_{k\in[m_2]}$ where $\vect{w}_{j,\ell}$ and $\vect{z}_{j,k}$ are chosen independently at random from $ \times_{i\in[j+1]} \mathcal{D}_i $ and $\mathcal{D}''_j$  for $j\in[D-1]\setminus\{0\}$, respectively. If $j=D-1$, $\mathcal{W}_{D-1}:=\{\vect{w}_{D-1,\ell}\}_{\ell\in[m_1]}$ is chosen from $\times_{i\in[D]} \mathcal{D}_i$ and $\mathcal{Z}_{D-1}=\emptyset$. Note that $\mathcal{W}_{j}\times \mathcal{Z}_{j}=\mathcal{G}^P_{j}$ from Section \ref{sec:UniversalGrids}. Furthermore, the sets $\mathcal{W}_{j}\times \mathcal{Z}_{j}$ for all $j\in [D]\setminus\{0\}$ build a random sampling matrix $A^P$ in (\ref{defSamMat}) which explicitly expresses the samples(evaluations) of the $2s$-sparse $h$ used in the (\ref{eqnEest2}) as $A^P \vect{r}$. The matrix $A^P$ is broken into smaller matrices $B_j$ and $C_j$ for $j\in[D]\setminus \{0\}$ defined and explained in the next paragraph for the complete analysis of the pairing process in the upcoming lemmas and theorems in this section.

For all $j\in [D-1]\setminus\{0\}$, the measurement matrix ${C}_j \in \mathbb{C}^{m_2 \times [M]^{D-j-1}}$ is defined as
\begin{equation}
\left( C_j \right)_{k, \vect{n}_2}:= \frac{1}{\sqrt{m_2}} T''_{\vect{n}_2}(\vect{z}_{j,k}), \quad k \in [m_2], ~ \vect{n}_2 \in [M]^{D-j-1},
\label{eqn:defC}
\end{equation}
where $T''_{\vect{n}_2}(\vect{y})$ is a partial product of the last $D-j-1$ terms of $T_{\vect{n}}(\vect{x})$ defined in (\ref{def:T_n}), i.e.,
\begin{equation*}
T''_{\vect{n}_2}(\vect{y})=\prod_{i\in[D-j-1]}T_{{i+j+1};n_{i+j+1}}(y_i), 
\quad \vect{y} \in \mathcal{D}''_j. 
\end{equation*}
The matrix $C_j $ can be restricted to the matrix of size $m_2 \times \widetilde{N}_j$ when $C_j $ is applied to $\vect{v}_{j,(\widetilde{n}, \cdots)}$ where $\widetilde{N}_j$ estimated in (\ref{lem:estNj}) is the cardinality of the superset of any possible ${\rm supp}(\vect{v}_{j;\widetilde{\vect{n}}})$ with fixed $j, \widetilde{\vect{n}}, D$ and $d$ so that it satisfies RIP with sufficiently large $m_2$. When the $j=D-1$, the matrix $C_j$ is defined to be $1$ since $\mathcal{Z}_{D-1}=\emptyset$.
For all $j\in [D]\setminus\{0\}$, on the other hand, the matrices ${B}_j \in \mathbb{C}^{m_1 \times [M]^{j+1}}$
is defined as 
\begin{equation}
\left( B_j \right)_{\ell, \vect{n}_1}:= \frac{1}{\sqrt{m_1}} T'_{\vect{n}_1}(\vect{w}_{j,\ell}), \quad \ell \in [m_1], ~ \vect{n}_1 \in [M]^{j+1},
\label{eqn:defB}
\end{equation}
where $T'_{\vect{n}_1}(\vect{z})$ is a partial product of the first $j+1$ terms of $T_{\vect{n}}(\vect{x})$ in (\ref{def:T_n}), i.e.,
\begin{equation*}
T'_{\vect{n}_1}(\vect{z})=\prod_{i\in[j+1]}T_{{i};n_{i}}(z_i), 
\quad \vect{z} \in \times_{i\in[j+1]}\mathcal{D}_i. 
\end{equation*}
The matrix $B_j$ can be restricted to the matrix of size $m_1 \times \bar{N}_j$ where 
$\bar{N}_j={j+1 \choose d} M^{d}$ if $j+1\geq d$, or $M^d$ otherwise. The number $\bar{N}_j$ is the cardinality of the set of any possible prefix $\widetilde{n}\in[M]^{j+1}$ with fixed $j, \widetilde{n}$ and $d$.
The sampling numbers $m_1$ and $m_2$ are chosen for all ${B}_j$ and ${C}_j$ with any $j\in[D]\setminus \{0\}$ to satisfy RIP with the restricted isometry constants $\widetilde{\delta}_{2s+1}\leq \widetilde{\delta}$ and $\delta'_{2s} \leq \delta'$, respectively. We again mention that $C_{D-1}=1$.
\\
\begin{lemma}
Let $\mathcal{H}_{2s}$ be the set of all functions $h$ whose coefficient vectors are $2s$-sparse and let $\vect{r}_h \in \mathbb{C}^N$ denote the coefficient vector for each $h \in \mathcal{H}_{2s}$.
Let $j\in [D]\setminus \{0\}$, and $\widetilde{\delta}$ and $\delta'$ be chosen from $(0,1)$. Assume that ${B}_j$ and ${C}_j$ satisfy {RIP} with $\widetilde{\delta}_{2s+1}\leq \widetilde{\delta}$ and $\delta'_{2s} \leq \delta'$, respectively. Then, denoting $\vect{r}:=\vect{r}_h$ for simplicity,  
\begin{align}
(1 - \delta'_{2s} ) \left( \max\{0,\left\| {\vect{r}}_{j;(\widetilde{\vect{n}},\cdots)} \right\|_2-\widetilde{\delta}_{2s+1} \|\vect{r}\|_2 \}\right)^2  & \leq E_{j;(\widetilde{\vect{n}},\cdots)} \nonumber \\
&\leq (1 + \delta'_{2s} ) \left( \left\|{\vect{r}}_{j;(\widetilde{\vect{n}},\cdots)} \right\|_2+\widetilde{\delta}_{2s+1}\|\vect{r}\|_2 \right)^2.
\label{equ:PairUnif3}
\end{align}
for any $\widetilde{\vect{n}}\in [M]^{j+1}$.
\label{lem:induction1}
\end{lemma}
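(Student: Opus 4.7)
The plan is to express the estimator $E_{j;(\widetilde{\vect{n}},\cdots)}$ exactly as $\|C_j\vect{\alpha}\|_2^2$ for a suitable auxiliary vector $\vect{\alpha}$ built from $\vect{r} := \vect{r}_h$ via $B_j^*B_j$, and then sandwich the result using the RIP of $C_j$ together with the standard inner-product RIP bound for $B_j$. To construct $\vect{\alpha}$ I would expand $h$ using the product structure $T_{\vect{n}}(\vect{w},\vect{z}) = T'_{\vect{n}_1}(\vect{w})T''_{\vect{n}_2}(\vect{z})$ for $\vect{n} = (\vect{n}_1,\vect{n}_2) \in [M]^{j+1}\times[M]^{D-j-1}$ and recognize via \eqref{eqn:defB} that $\frac{1}{m_1}\sum_\ell T'_{\vect{n}_1}(\vect{w}_{j,\ell})\overline{T'_{\widetilde{\vect{n}}}(\vect{w}_{j,\ell})} = (B_j^*B_j)_{\widetilde{\vect{n}},\vect{n}_1}$. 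Substituting into \eqref{eqnEest2}, the inner sum over $\ell$ collapses to $\sqrt{m_2}\,(C_j\vect{\alpha})_k$ with $\alpha_{\vect{n}_2} := \sum_{\vect{n}_1}(B_j^*B_j)_{\widetilde{\vect{n}},\vect{n}_1}\,r_{(\vect{n}_1,\vect{n}_2)}$, so the outer $\frac{1}{m_2}$-sum of squared magnitudes is exactly $E_{j;(\widetilde{\vect{n}},\cdots)} = \|C_j\vect{\alpha}\|_2^2$.

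Next I would extract two sparsity facts that make both RIPs bite. Introducing the complementary slice $\vect{r}^{(\vect{n}_2)} := \bigl(r_{(\vect{n}_1,\vect{n}_2)}\bigr)_{\vect{n}_1 \in [M]^{j+1}}$, we have $\alpha_{\vect{n}_2} = \langle B_j\vect{r}^{(\vect{n}_2)},\,B_j\vect{e}_{\widetilde{\vect{n}}}\rangle$, an inner product whose two arguments are supported on sets of size at most $2s$ and $1$ respectively, with joint support of size at most $2s+1$. The standard RIP inner-product estimate for $B_j$ thus yields $|\alpha_{\vect{n}_2} - r_{(\widetilde{\vect{n}},\vect{n}_2)}| \leq \widetilde{\delta}_{2s+1}\|\vect{r}^{(\vect{n}_2)}\|_2$. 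Squaring, summing over $\vect{n}_2$, and using $\sum_{\vect{n}_2}\|\vect{r}^{(\vect{n}_2)}\|_2^2 = \|\vect{r}\|_2^2$ gives
\[
\bigl\|\vect{\alpha} - \vect{r}_{j;(\widetilde{\vect{n}},\cdots)}\bigr\|_2 \leq \widetilde{\delta}_{2s+1}\|\vect{r}\|_2,
\]
so by the triangle inequality $\|\vect{\alpha}\|_2$ lies within $\widetilde{\delta}_{2s+1}\|\vect{r}\|_2$ of $\|\vect{r}_{j;(\widetilde{\vect{n}},\cdots)}\|_2$. Second, the support of $\vect{\alpha}$ is contained in $\{\vect{n}_2 : \exists\,\vect{n}_1,\,r_{(\vect{n}_1,\vect{n}_2)}\neq 0\}$, whose cardinality is at most $|\mathrm{supp}(\vect{r})| \leq 2s$, so $\vect{\alpha}$ is $2s$-sparse regardless of $\widetilde{\vect{n}}$.

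Finally, the RIP of $C_j$ at order $2s$ gives $(1-\delta'_{2s})\|\vect{\alpha}\|_2^2 \leq E_{j;(\widetilde{\vect{n}},\cdots)} \leq (1+\delta'_{2s})\|\vect{\alpha}\|_2^2$, and combining with the two-sided bound on $\|\vect{\alpha}\|_2$ from the previous paragraph — taking the positive part in the lower bound to cover the degenerate case $\|\vect{r}_{j;(\widetilde{\vect{n}},\cdots)}\|_2 < \widetilde{\delta}_{2s+1}\|\vect{r}\|_2$ — reproduces exactly \eqref{equ:PairUnif3}. The main obstacle is precisely the second sparsity claim: $\vect{\alpha}$ must be $2s$-sparse uniformly in the prefix $\widetilde{\vect{n}}$, since otherwise the $C_j$-RIP cannot be invoked; this relies on the observation that $B_j^*B_j$ mixes only the first block of coordinates and therefore cannot enlarge the $\vect{n}_2$-support already bounded by $\mathrm{supp}(\vect{r})$. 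Once that is in place, the rest of the argument is just careful bookkeeping of the two RIP orders $2s+1$ (for $B_j$) and $2s$ (for $C_j$).
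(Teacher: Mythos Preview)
Your proposal is correct and follows essentially the same approach as the paper: both identify $E_{j;(\widetilde{\vect{n}},\cdots)} = \|C_j\vect{\alpha}\|_2^2$ for the same $2s$-sparse auxiliary vector, apply the RIP of $C_j$, and then bound $\|\vect{\alpha}-\vect{r}_{j;(\widetilde{\vect{n}},\cdots)}\|_2 \leq \widetilde{\delta}_{2s+1}\|\vect{r}\|_2$ via the RIP of $B_j$ at order $2s+1$. The only stylistic difference is that the paper packages the last step as $\|R_j((B_j)_{\mathcal{Q}}^*(B_j)_{\mathcal{Q}}-I)\vect{e}_{\widetilde{\vect{n}}}\|_2 \leq \|R_j\|_{2\to 2}\,\widetilde{\delta}_{2s+1} \leq \|R_j\|_F\,\widetilde{\delta}_{2s+1}$ using a reshaping matrix $R_j$, whereas you reach the identical bound componentwise via the RIP inner-product estimate and then sum squares over $\vect{n}_2$.
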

\begin{proof}
As $j\in[D]\setminus \{0\}$ is fixed, for simplicity, we use notation  $\vect{w}_{\ell}$ and $\vect{z}_{k}$ for sampling points instead of $\vect{w}_{j,\ell}$ and $\vect{z}_{j,k}$ constructing the sampling matrices $B_j$ and $C_j$ as in (\ref{eqn:defB}) and (\ref{eqn:defC}), respectively. Fix $\widetilde{\vect{n}}\in [M]^{j+1}$. Letting $\vect{n}=(\vect{n}_1,\vect{n}_2)$, $\vect{n}_1 \in[M]^{j+1}$ and $\vect{n}_2 \in[M]^{D-j-1}$, we can rewrite the energy estimate $E_{j;(\widetilde{\vect{n}},\cdots)}$ as follows,
\begin{align}
E_{j;(\widetilde{\vect{n}},\cdots)}&=\frac{1}{m_2}\sum_{k\in[m_2]} \left| \frac{1}{m_1}\sum_{\ell\in[m_1]} h(\vect{w}_{\ell},\vect{z}_{k}) \overline{{T}_{j;\vect{\widetilde{n}}}(\vect{w}_{\ell})}\right|^2\nonumber\\
&=\frac{1}{m_2}\sum_{k\in[m_2]} \left| \frac{1}{m_1}\sum_{\ell\in[m_1]} \sum_{\vect{n}=(\vect{n}_1,\vect{n}_2)\in {\rm supp}(\vect{r})} r_{\vect{n}} T_{\vect{n}}(\vect{w}_{\ell},\vect{z}_{k}) \overline{{T}_{j;\vect{\widetilde{n}}}(\vect{w}_{\ell})}\right|^2\nonumber\\
&=\frac{1}{m_2}\sum_{k\in[m_2]} \frac{1}{m_1^2} \left| \sum_{\vect{n}\in {\rm supp}(\vect{r})} \sum_{\ell\in[m_1]} r_{\vect{n}} T'_{\vect{n}_1}(\vect{w}_{\ell}) \overline{{T}_{j;\vect{\widetilde{n}}}(\vect{w}_{\ell})} T''_{\vect{n}_2}(\vect{z}_{k}) \right|^2\nonumber\\
&=:\frac{1}{m_2}\sum_{k\in[m_2]}  \frac{1}{m_1^2} \left| \sum_{\substack{{\vect{n}_2 \text{ s.t. }} \exists \vect{n}_1 \\ \text{  with }(\vect{n}_1, \vect{n}_2) \in {\rm supp}(\vect{r})} } \left( \widetilde{r}_{\vect{\widetilde{n}}}\right)_{\vect{n}_2} T''_{\vect{n}_2}(\vect{z}_{k}) \right|^2, 
\label{eqn:energyEst}
\end{align}
where 
\begin{equation}
\left( {\widetilde{r}_{\vect{\widetilde{n}}}}\right)_{\vect{n}_2}:=\sum_{\substack{{\vect{n}_1 \text{ s.t.}}\\{\vect{n}=(\vect{n}_1, \vect{n}_2) \in {\rm supp}(\vect{r})} }} r_{\vect{n}} \sum_{\ell\in[m_1]}  T'_{\vect{n}_1}(\vect{w}_{\ell}) \overline{{T}_{j;\vect{\widetilde{n}}}(\vect{w}_{\ell})}.
\label{eqn:innerProd}
\end{equation}
We can construct a vector $\vect{\widetilde{r}}:=\left( \left(\widetilde{r}_{\vect{\widetilde{n}}} \right)_{\vect{n}_2}\right) \in \mathbb{C}^{\widetilde{N}_j}$ with entries $\left( {\widetilde{r}_{\vect{\widetilde{n}}}}\right)_{\vect{n}_2}$ at $\vect{n}_2$ so that $\widetilde{\vect{r}}$ has a support whose cardinality is at most $2s$ since $h$ is $2s$-sparse. Thus, the energy estimate $E_{j;(\widetilde{\vect{n}},\cdots)}$ in \eqref{eqn:energyEst} can be expressed as $\left\|C_j \left(\frac{\widetilde{\vect{r}}}{m_1}\right) \right\|_2^2$. Since the restricted measurement matrix ${C}_j \in \mathbb{C}^{m_2\times \widetilde{N}_j}$ satisfies the {RIP}, 
\begin{align}
(1 - \delta_{2s}' ) \left \| \frac{\vect{\widetilde{r}}}{m_1} \right \|_2^2 \leq \left\|C_j \left(\frac{\widetilde{\vect{r}}}{m_1}\right) \right\|_2^2 \leq (1 + \delta_{2s}' ) \left \| \frac{\vect{\widetilde{r}}}{m_1} \right \|_2^2.
\label{equ:PairUnif1}
\end{align}
In order to get an upper bound and lower bound of $\left \| \frac{\vect{\widetilde{r}}}{m_1} \right \|_2$, we define $\vect{e}_{\widetilde{\vect{n}}}\in \mathbb{C}^{\bar{N}_j}$ as a standard basis vector  with all $0$ entries except for a $1$ at $\widetilde{\vect{n}}$, and  $R_j\in \mathbb{C}^{\widetilde{N}_j\times \bar{N}_j}$ as 
\begin{equation*}
(R_j)_{\vect{n}_2,\vect{n}_1}:=\begin{cases}
r_{(\vect{n}_1,\vect{n}_2)} \qquad \text{ if } 
(\vect{n}_1, \vect{n}_2) \in {\rm supp}(\vect{r})\\
0 \qquad \text{ otherwise }.
\end{cases}
\end{equation*}
Note that $R_j$ contains at most $2s$ nonzero elements since $h$ is $2s$-sparse, and $\widetilde{\vect{n}}$ is any element in $[M]^{j+1}$. Set $\mathcal{Q}:=\{\widetilde{\vect{n}}\} \cup \{\vect{n}_1 \in [M]^{j+1}~|~\exists \vect{n}_2 \in [M]^{D-j-1} \text{ such that } (\vect{n}_1,\vect{n}_2)\in {\rm supp}(\vect{r}) \}$ with a fixed $\widetilde{\vect{n}}\in  [M]^{j+1}$. Thus, the cardinality of $\mathcal{Q}$ is at most $2s+1$. Orderings of indices $\vect{n}_1$ and $\vect{n}_2$ depend on the column orderings of $B_j$ and $C_j$, respectively. We note that the $\widetilde{\vect{n}}$-th column of $R_j$ is $\vect{r}_{j;(\widetilde{\vect{n}},\cdots)}$. Both bounds of $\left \| \frac{\vect{\widetilde{r}}}{m_1} \right \|_2$ are found as follows
\begin{align*}
\left\| \frac{\vect{\widetilde{r}}}{m_1} -{\vect{r}}_{j;(\widetilde{\vect{n}},\cdots)} \right\|_2 &=\left\| R_j(B_j)_{\mathcal{Q}}^* (B_j)_{\mathcal{Q}} \vect{e}_{\widetilde{\vect{n}}} -  R_j\vect{e}_{\widetilde{\vect{n}}} \right\|_2 \\
&\leq \| R_j\|_{2\rightarrow 2} \|(B_j)_{\mathcal{Q}}^* (B_j)_{\mathcal{Q}}- I\|_{2 \rightarrow 2}  \|\vect{e}_{\widetilde{\vect{n}}}\|_2 \\
&\leq  \widetilde{\delta}_{2s+1} \| R_j\|_F  \\
&=  \widetilde{\delta}_{2s+1} \| \vect{r}\|_2 
\end{align*}
and therefore, 
\begin{align}
&\left| \left\| \frac{\vect{\widetilde{r}}}{m_1} \right\|_2-\left\|{\vect{r}}_{j;(\widetilde{\vect{n}},\cdots)} \right\|_2 \right| 
\leq \widetilde{\delta}_{2s+1} \|\vect{r}\|_2,\nonumber\\
&\left\| {\vect{r}}_{j;(\widetilde{\vect{n}},\cdots)} \right\|_2-\widetilde{\delta}_{2s+1} \|\vect{r}\|_2 \leq 
\left\| \frac{\vect{\widetilde{r}}}{m_1} \right\|_2 \leq 
\left\|{\vect{r}}_{j;(\widetilde{\vect{n}},\cdots)} \right\|_2+\widetilde{\delta}_{2s+1}\|\vect{r}\|_2.
\label{equ:PairUnif0} 
\end{align}
Combining \eqref{equ:PairUnif1} and \eqref{equ:PairUnif0}, we reach the conclusion in \eqref{equ:PairUnif3}.
\end{proof}
\begin{lemma}
Let $\mathcal{H}_{2s}$ be the set of all functions $h$ whose coefficient vectors are $2s$-sparse and let $\vect{r}_h \in \mathbb{C}^N$ denote the coefficient vector for each $h \in \mathcal{H}_{2s}$.
Let $j$ be any integer such that $j\in [D]\setminus \{0\}$, and $\widetilde{\delta}$ and $\delta'$ be chosen from $(0,1)$. Given $\alpha>1$, assume that the restricted ${B}_j$ and ${C}_j$ satisfy {RIP} with $\widetilde{\delta}_{2s+1} \leq \widetilde{\delta} \leq \frac{1}{\check{c}\sqrt{s}}$ for some $\check{c}> \sqrt{2}$ and $\delta'_{2s} \leq \delta' \in (0,1)$, respectively, with $\sqrt{\frac{1+\delta'}{1-\delta'}}<\frac{\check{c}}{\alpha}-1$. Then, denoting $\vect{r}:=\vect{r}_h$ for simplicity, one has the set $\mathcal{P}_{j} \supset \left\{\widetilde{\vect{n}}\in [M]^{j+1} ~\big|~ \|\vect{r}_{j;(\widetilde{\vect{n}},\cdots)}\|_2^2 \geq \frac{\|\vect{r}\|_2^2}{\alpha^2 s} \right\}$  of cardinality $2s$ resulting from Algorithm \ref{alg:Pairing} if $\mathcal{P}_{j-1} \supset \left\{\hat{\vect{n}}\in [M]^{j} ~\big|~ \|\vect{r}_{j-1;(\hat{\vect{n}},\cdots)}\|_2^2 \geq \frac{\|\vect{r}\|_2^2}{\alpha^2 s}\right\}$.
\label{lem:induction2}
\end{lemma}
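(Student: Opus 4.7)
The plan is to prove this inductive step by combining the uniform energy estimate of Lemma~\ref{lem:induction1} with a counting argument driven by the $2s$-sparsity of $\vect{r} := \vect{r}_h$. Set $S := \{\widetilde{\vect{n}} \in [M]^{j+1} ~\big|~ \|\vect{r}_{j;(\widetilde{\vect{n}},\cdots)}\|_2 > 0\}$. Since every element of $S$ is the length-$(j+1)$ prefix of at least one vector in ${\rm supp}(\vect{r})$, the crucial structural fact is $|S| \leq 2s$.

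First I would verify the candidate inclusion property: every ``heavy'' prefix $\widetilde{\vect{n}} \in [M]^{j+1}$ with $\|\vect{r}_{j;(\widetilde{\vect{n}},\cdots)}\|_2^2 \geq \|\vect{r}\|_2^2/(\alpha^2 s)$ actually appears in the product set $\mathcal{P}_{j-1} \times \mathcal{N}_j$ scanned by Algorithm~\ref{alg:Pairing}. Its length-$j$ prefix $(\widetilde{n}_0, \ldots, \widetilde{n}_{j-1})$ satisfies $\|\vect{r}_{j-1;((\widetilde{n}_0, \ldots, \widetilde{n}_{j-1}),\cdots)}\|_2^2 \geq \|\vect{r}_{j;(\widetilde{\vect{n}},\cdots)}\|_2^2$, because the former pools over a superset of index vectors; hence by the inductive hypothesis on $\mathcal{P}_{j-1}$ this length-$j$ prefix lies in $\mathcal{P}_{j-1}$, while the last entry $\widetilde{n}_j$ is the $j$-th coordinate of some element of ${\rm supp}(\vect{r})$ and so sits in $\mathcal{N}_j$ (assuming the $\mathcal{N}_j$ produced by entry identification contains all such coordinates, a property guaranteed by the analysis of Section~\ref{sec:entryId}).

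Next I would use Lemma~\ref{lem:induction1} to produce a strict gap between the $E$-values of heavy prefixes $\widetilde{\vect{n}}^*$ and of prefixes $\widetilde{\vect{n}}'' \notin S$ (for which $\|\vect{r}_{j;(\widetilde{\vect{n}}'',\cdots)}\|_2 = 0$). Plugging the threshold $\|\vect{r}_{j;(\widetilde{\vect{n}}^*,\cdots)}\|_2 \geq \|\vect{r}\|_2/(\alpha\sqrt{s})$ into the lower bound of \eqref{equ:PairUnif3}, zero into the upper bound, and using $\widetilde{\delta}_{2s+1} \leq 1/(\check{c}\sqrt{s})$ throughout, I obtain
\[
E_{j;(\widetilde{\vect{n}}^*,\cdots)} \,\geq\, (1-\delta'_{2s})\,\frac{\|\vect{r}\|_2^2}{s}\left(\frac{1}{\alpha} - \frac{1}{\check{c}}\right)^2, \qquad E_{j;(\widetilde{\vect{n}}'',\cdots)} \,\leq\, (1+\delta'_{2s})\,\frac{\|\vect{r}\|_2^2}{\check{c}^2\, s}.
\]
A short rearrangement reduces the strict inequality $E_{j;(\widetilde{\vect{n}}^*,\cdots)} > E_{j;(\widetilde{\vect{n}}'',\cdots)}$ to the condition $\check{c}/\alpha - 1 > \sqrt{(1+\delta'_{2s})/(1-\delta'_{2s})}$, which is precisely the hypothesis of the lemma.

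To close the argument I would proceed by contradiction: if some heavy $\widetilde{\vect{n}}^*$ were omitted from $\mathcal{P}_j$, then each of the $2s$ prefixes actually selected by Algorithm~\ref{alg:Pairing} would have $E$-value at least $E_{j;(\widetilde{\vect{n}}^*,\cdots)}$, and hence by the gap above would be forced to lie in $S$; together with $\widetilde{\vect{n}}^* \in S$ itself this would yield $|S| \geq 2s + 1$, contradicting $|S| \leq 2s$. The main obstacle I anticipate is verifying cleanly that the stated parameter constraints on $\widetilde{\delta}$, $\delta'$, $\check{c}$, and $\alpha$ are exactly what makes this separation work; once that computation is in hand, the remainder is a short pigeonhole.
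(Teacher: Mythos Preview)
Your proposal is correct and follows essentially the same approach as the paper: both arguments verify that heavy prefixes lie in $\mathcal{P}_{j-1}\times\mathcal{N}_j$, invoke Lemma~\ref{lem:induction1} to separate the $E$-values of heavy prefixes from those of zero-energy prefixes via the parameter condition $\sqrt{(1+\delta')/(1-\delta')}<\check{c}/\alpha-1$, and conclude that the top $2s$ estimates must capture all heavy prefixes. Your final pigeonhole using $|S|\le 2s$ makes explicit the counting step that the paper states only in one sentence, but the substance is identical.
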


\begin{proof}
By assumption that $\mathcal{P}_{j-1} \subset \times_{i\in[j]} \mathcal{N}_i$ contains all prefixes in $\big\{\hat{\vect{n}}\in [M]^{j}~\big|~ \|\vect{r}_{j-1;(\hat{\vect{n}},\cdots)}\|_2^2 \allowbreak \geq \frac{\|\vect{r}\|_2^2}{\alpha^2 s}\big\}$, ~$\mathcal{P}_{j-1} \times \mathcal{N}_{j}$  contains all possible prefixes $\widetilde{\vect{n}} \in [M]^{j+1}$ with $\|\vect{r}_{j;(\widetilde{\vect{n}},\cdots)}\|_2^2 \geq \frac{\|\vect{r}\|_2^2}{\alpha^2 s}$ by the definition of $\mathcal{P}_j$ and $\mathcal{N}_j$ for all $j\in[D]$.
If $\left\| {\vect{r}}_{j;(\widetilde{\vect{n}},\cdots)} \right\|_2=0$, i.e., there is no $\vect{n}_2$ such that $(\widetilde{\vect{n}},\vect{n}_2) \in {\rm supp}(\vect{r})$, then from Lemma \ref{lem:induction1},
\begin{equation*}
0 \leq E_{j;(\widetilde{\vect{n}},\cdots)}
\leq (1 + \delta' ) \left( \widetilde{\delta}\|\vect{r}\|_2 \right)^2 
\leq (1 + \delta' ) \left( \frac{\|\vect{r}\|_2}{\check{c}\sqrt{s}} \right)^2.
\end{equation*}
On the other hand, if $\left\| {\vect{r}}_{j;(\widetilde{\vect{n}},\cdots)} \right\|_2^2 \geq \frac{\|\vect{r}\|_2^2}{\alpha^2 s}$, i.e., there is $\vect{n}_2$ such that $(\widetilde{\vect{n}},\vect{n}_2)\in {\rm supp}(\vect{r})$, then 
\begin{align}
(1 - \delta' ) \left( \left\| {\vect{r}}_{j;(\widetilde{\vect{n}},\cdots)} \right\|_2- \frac{\|\vect{r}\|_2}{\check{c}\sqrt{s}}  \right)^2 \nonumber
& \leq (1 - \delta' ) \left( \left\| {\vect{r}}_{j;(\widetilde{\vect{n}},\cdots)} \right\|_2-\widetilde{\delta} \|\vect{r}\|_2 \right)^2 \nonumber \\
& \leq E_{j;(\widetilde{\vect{n}},\cdots)} \leq (1 + \delta' ) \left( \left\|{\vect{r}}_{j;(\widetilde{\vect{n}},\cdots)} \right\|_2+\widetilde{\delta}\|\vect{r}\|_2 \right)^2.\nonumber
\end{align}
In order to distinguish nonzero $\left\| {\vect{r}}_{j;(\widetilde{\vect{n}},\cdots)} \right\|_2 \geq \frac{\|\vect{r}\|_2}{\alpha \sqrt{s}}$ from zero $\left\| {\vect{r}}_{j;(\widetilde{\vect{n}},\cdots)} \right\|_2$, we should have
\begin{align}
(1 + \delta' ) \left( \frac{\|\vect{r}\|_2}{\check{c}\sqrt{s}}  \right)^2
&< (1 - \delta' ) \left( \frac{\|\vect{r}\|_2}{\alpha \sqrt{s}}-\frac{\|\vect{r}\|_2}{\check{c}\sqrt{s}}  \right)^2 \nonumber
\end{align}
which is implied by 
\begin{equation}
\sqrt{\frac{1+\delta'}{1-\delta'}}<\frac{\check{c}}{\alpha}-1, \nonumber
\end{equation}
as in the assumption.
Since estimates of zero energy and nonzero energy greater than $\frac{\|\vect{r}\|_2^2}{\alpha^2 s}$ are separated, choosing $2s$ prefixes with largest estimates $E_{\vect{\widetilde{n}}}$ guarantees that it contains all prefixes with energy greater than $\frac{\|\vect{r}\|_2^2}{\alpha^2 s}$.
\end{proof}
\begin{mytheorem}
Let $\mathcal{H}_{2s}$ be the set of all functions $h$ whose coefficient vectors are $2s$-sparse and let $\vect{r}_h \in \mathbb{C}^N$ denote the coefficient vector for each $h \in \mathcal{H}_{2s}$. We assume that we have $\mathcal{N}_j$ for all $j\in [D]$. 
Let $\alpha > 1$, $\widetilde{\delta} \leq \frac{1}{\check{c}\sqrt{s}}$ for some $\check{c}> \sqrt{2}$ and $\delta'\in (0,1)$, satisfying 
$\sqrt{\frac{1-\delta'}{1+\delta'}} < \frac{\check{c}}{\alpha }-1$,
and $p\in(0,1)$. If 
$$m_1\geq \bar{\alpha} K^2 \left(\widetilde{\delta}\right)^{-2}s \max\left\{ \log^2(4s)\cdot d \cdot \log \left( \frac{\mathbbm{e} M D^{1+\frac{1}{d\log^3(4s)}}}{d} \right) \log(9m_1), \log(2D/p)\right\} \text{ and}$$
$$m_2 \geq \bar{\beta} K^2 \left(\delta'\right)^{-2} s \max \left\{ \log^2(4s)\cdot d  \cdot \log \left( \frac{\mathbbm{e}MD^{1+\frac{1}{d\log^3(4s)}}}{d}\right) \log (9m_2), \log(2D/p)\right\} , $$
for  absolute constants $\bar{\alpha}$ and $\bar{\beta}$, then denoting $\vect{r}:=\vect{r}_h$ for simplicity, Algorithm  \ref{alg:Pairing} finds $\mathcal{P} \supset \big\{ \vect{n} \in [M]^D \allowbreak ~|~ |r_{\vect{n}}|^2 \geq \frac{\|\vect{r}\|_2^2}{\alpha^2 s}\big \}$ of $|\mathcal{P}|=2s$ with probability at least $1-p$.
\label{thm:pairing}
\end{mytheorem}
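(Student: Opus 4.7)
The plan is to reduce the theorem to an induction on the prefix length $j$ that is driven by Lemmas~\ref{lem:induction1} and~\ref{lem:induction2}, after first securing the restricted isometry property for every sampling matrix used in Algorithm~\ref{alg:Pairing}. In outline: (i) choose $m_1$ and $m_2$ so that all of the $B_j$ and $C_j$ satisfy the required RIP simultaneously with probability at least $1-p$; (ii) on that event, Lemma~\ref{lem:induction1} furnishes uniform per-prefix energy estimates $E_{j;(\widetilde{\vect{n}},\cdots)}$; (iii) Lemma~\ref{lem:induction2} advances the induction from $\mathcal{P}_{j-1}$ to $\mathcal{P}_j$.

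First I would handle the RIP step. For each $j\in[D]\setminus\{0\}$, the restricted matrix $B_j$ has at most $\bar{N}_j \leq \left(eDM/d\right)^d$ columns (those indexed by prefixes in $\mathcal{I}$ of length $j+1$) and, by the lemma bounding $\widetilde{N}_j$ in \eqref{lem:estNj}, $C_j$ has at most $\widetilde{N}_j \leq \left(eDM/d\right)^d$ columns. Applying Theorem~\ref{thm:BOS_RIP} to each of the $2(D-1)$ restricted matrices with failure probability $p/(2D)$, substituting $N\leq(eDM/d)^d$, and taking a union bound, one obtains $\widetilde{\delta}_{2s+1}(B_j)\leq\widetilde{\delta}$ and $\delta'_{2s}(C_j)\leq\delta'$ for all $j$ simultaneously with probability at least $1-p$ under exactly the stated lower bounds on $m_1$ and $m_2$.

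Next I would carry out the induction on $j$. For the base case $\mathcal{P}_0=\mathcal{N}_0$: any $\hat{n}\in[M]$ with $\|\vect{r}_{0;\hat{n}}\|_2^2\geq\|\vect{r}\|_2^2/(\alpha^2 s)>0$ satisfies $\vect{r}_{0;\hat{n}}\neq\vect{0}$ and hence lies in $\mathcal{N}_0$ by the assumption that $\mathcal{N}_0$ contains every first entry appearing in $\mathrm{supp}(\vect{r})$. For the inductive step, the parameter choices $\widetilde{\delta}\leq 1/(\check{c}\sqrt{s})$ and $\sqrt{(1+\delta')/(1-\delta')}<\check{c}/\alpha-1$ match the hypotheses of Lemma~\ref{lem:induction2}; the RIP event from Step 1 (which is independent of $h$, so the conclusion of Lemma~\ref{lem:induction1} applies to every $h\in\mathcal{H}_{2s}$) then gives
$$\mathcal{P}_j\supset\left\{\widetilde{\vect{n}}\in[M]^{j+1}~\big|~\|\vect{r}_{j;(\widetilde{\vect{n}},\cdots)}\|_2^2\geq\|\vect{r}\|_2^2/(\alpha^2 s)\right\}$$
with $|\mathcal{P}_j|=2s$. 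Iterating to $j=D-1$ and noting that $\|\vect{r}_{D-1;(\vect{n},\cdots)}\|_2^2=|r_{\vect{n}}|^2$ yields $\mathcal{P}=\mathcal{P}_{D-1}\supset\{\vect{n}:|r_{\vect{n}}|^2\geq\|\vect{r}\|_2^2/(\alpha^2 s)\}$, as required.

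The main obstacle I anticipate is the bookkeeping in Step 1: Theorem~\ref{thm:BOS_RIP} yields the product $\log^2(4s)\log(8N)\log(9m_1)$, and rewriting it in the compact form $\log^2(4s)\cdot d\cdot\log\!\bigl(eMD^{1+1/(d\log^3(4s))}/d\bigr)$ that appears in the statement requires absorbing the $\log(9m_1)$ factor into the bound on $N$ via a standard self-bootstrapping argument together with the elementary inequality $\log(8(eDM/d)^d)\log(9m_1)\leq$ (const)$\cdot d\log^3(4s)\log\!\bigl(eMD^{1+1/(d\log^3(4s))}/d\bigr)$, which is delicate but routine. Everything else is a direct composition of Lemmas~\ref{lem:induction1} and~\ref{lem:induction2} with Theorem~\ref{thm:BOS_RIP}.
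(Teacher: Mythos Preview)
Your plan is correct and matches the paper's own proof almost line for line: secure the RIP for every $B_j$ and $C_j$ simultaneously via Theorem~\ref{thm:BOS_RIP} (the paper packages this as Lemma~\ref{lem:RIPforD}) with per-matrix failure probability $p/(2D)$ and a union bound, then induct on $j$ using Lemma~\ref{lem:induction2} starting from $\mathcal{P}_0=\mathcal{N}_0$. One small correction to your final paragraph: the $\log(9m_1)$ and $\log(9m_2)$ factors are \emph{still present} in the displayed lower bounds on $m_1,m_2$ in the theorem statement, so no self-bootstrapping is required; the stated form follows directly by plugging the column-count bounds $\bar N_j,\widetilde N_j\le (eDM/d)^d$ into Theorem~\ref{thm:BOS_RIP} and absorbing constants into $\bar\alpha,\bar\beta$.
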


\begin{proof}
Given $m_1$ and $m_2$, by Theorem \ref{lem:RIPforD}, the probability of either $B_j$ or $C_j$ not satisfying $\widetilde{\delta}_{2s+1}<\widetilde{\delta}$ or $\delta'_{2s}<\delta'$ respectively is at most $\frac{p}{2D}$  for each $j\in [D]\setminus \{0\}$, and thus the union bound over all $j$ yields the failure probability at most $\frac{p(D-1)}{D}<p$. That is, Theorem \ref{lem:RIPforD} ensures that $B_j$ and $C_j$ have RIP uniformly for all $j\in [D]\setminus \{0\}$ with probability at least $1-p$. Repeatedly applying Lemma \ref{lem:induction2} yields the final $\mathcal{P}(=\mathcal{P}_{D-1}) \supset \left\{{\vect{n}}\in [M]^D~\big|~ |{r}_{\vect{n}}|^2 \geq \frac{\|\vect{r}\|_2^2}{\alpha^2 s}\right\}$ of cardinality $2s$ by combining the fact that $\mathcal{P}_0(=\mathcal{N}_0) \supset \left\{\widetilde{\vect{n}}\in [M] ~\big|~ \|\vect{r}_{(\widetilde{\vect{n}},\cdots)}\|_2^2 \geq \frac{\|\vect{r}\|_2^2}{\alpha^2 s} \right\}$. 
\end{proof}

\subsection{Support Identification}
\label{EntryId+pairing}

In this section, it remains to combine the results of entry identification and pairing processes in order to give the complete support identification algorithm and to prove Lemma \ref{lem:replCosamp} which is the main ingredient of Theorem \ref{mainThm} in Section \ref{sec:theorySuppId}.
The support identification starts with the entry identification providing $\mathcal{N}_j$, $j\in[D]$ as outputs, and in turn, the pairing takes $\mathcal{N}_j$, $j\in[D]$ as inputs and outputs $\mathcal{P}$ of cardinality $2s$ containing $\left\{ \vect{n} \in [M]^D ~|~ |r_{\vect{n}}|^2 \geq \frac{\|\vect{r}\|_2^2}{\alpha^2 s}\right\}$. Accordingly, we can get the following result.

\begin{lemma}
The set $\mathcal{P}$ contains at most $2s$ index vectors satisfying 
\begin{equation*}
\|\vect{r}_{\mathcal{P}^c}\|_2\leq \sqrt{2s\frac{\|\vect{r}\|_2^2}{\alpha^2s}}=\frac{\sqrt{2}\|\vect{r}\|_2}{\alpha},
\end{equation*}
where $\vect{r}_{\mathcal{P}^c}$ is the vector of $\vect{r}$ restricted to the complement of $\mathcal{P}$.
\label{lem:suppEnergy}
\end{lemma}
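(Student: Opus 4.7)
The plan is to derive this tail bound directly from the coverage guarantee provided by Theorem~\ref{thm:pairing}, combined with the $2s$-sparsity of the residual $\vect{r}$. By construction in Algorithm~\ref{alg:Pairing}, $|\mathcal{P}| = |\mathcal{P}_{D-1}| = 2s$, which handles the cardinality claim. The content of the statement is therefore the $\ell_2$ tail bound.

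First I would translate the containment guarantee of Theorem~\ref{thm:pairing} into a pointwise statement about $\mathcal{P}^c$: since $\mathcal{P}\supset\{\vect{n}\in[M]^D \,:\, |r_{\vect{n}}|^2\geq \|\vect{r}\|_2^2/(\alpha^2 s)\}$, every index $\vect{n}\in\mathcal{P}^c$ satisfies the strict upper bound $|r_{\vect{n}}|^2 < \|\vect{r}\|_2^2/(\alpha^2 s)$. Next I would invoke the standing assumption (from the analysis framework around Algorithm~\ref{alg:main}) that $\vect{r}=\vect{c}-\vect{a}^t$ is $2s$-sparse, so $|\mathcal{P}^c \cap \mathrm{supp}(\vect{r})| \leq |\mathrm{supp}(\vect{r})| \leq 2s$.

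Combining these two observations, I would write
\begin{equation*}
\|\vect{r}_{\mathcal{P}^c}\|_2^2 \;=\; \sum_{\vect{n}\in \mathcal{P}^c \cap \mathrm{supp}(\vect{r})} |r_{\vect{n}}|^2 \;\leq\; 2s \cdot \frac{\|\vect{r}\|_2^2}{\alpha^2 s} \;=\; \frac{2\,\|\vect{r}\|_2^2}{\alpha^2},
\end{equation*}
and then take square roots to obtain the claimed inequality $\|\vect{r}_{\mathcal{P}^c}\|_2 \leq \sqrt{2}\,\|\vect{r}\|_2/\alpha$.

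There is essentially no obstacle at this stage: all the difficulty has been absorbed by Theorem~\ref{thm:pairing}, whose proof required the full entry-identification analysis (Lemma~\ref{Thm:FiniteSetMedtoUn3}) together with the iterative prefix-pruning guarantee (Lemmas~\ref{lem:induction1} and~\ref{lem:induction2}). Once that theorem is in hand, the lemma is a one-line pigeonhole-style calculation: a uniform per-coordinate upper bound on at most $2s$ surviving nonzero coordinates.
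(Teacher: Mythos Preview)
Your proposal is correct and matches the paper's own proof essentially verbatim: the paper also simply invokes the $2s$-sparsity of $\vect{r}$ together with Theorem~\ref{thm:pairing} to bound each surviving $|r_{\vect{n}}|^2$ on $\mathcal{P}^c$ by $\|\vect{r}\|_2^2/(\alpha^2 s)$ and sums over at most $2s$ terms.
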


\begin{proof}
Note that $\vect{r}$ is $2s$-sparse and by Theorem \ref{thm:pairing}  the squared magnitude of each $r_{\vect{n}}$ at $\mathcal{P}^c$ is less than $\frac{\|\vect{r}\|_2^2}{\alpha^2 s}$ so that we obtain the desired result. 
\end{proof}
Finally, we are ready to prove Lemma \ref{lem:replCosamp} in order to complete the analysis of support identification.

\begin{proof}[Proof of Lemma \ref{lem:replCosamp}]
By choosing $\alpha=7$ and $\mathcal{P}=\Omega$ in Lemma \ref{lem:suppEnergy}, we obtain the desired upper bound of $\|(\vect{r}_h)_{\Omega^c}\|_2$ with probability at least $1-2p$ given the grids $\mathcal{G}^I$ and $\mathcal{G}^P$. The union bound of the failure probabilities $p$ of Lemma \ref{Thm:FiniteSetMedtoUn3} and Theorem \ref{thm:pairing} gives the desired probability. It remains to demonstrate the sampling complexity combining $\left|\mathcal{G}^I\right|$ and $\left|\mathcal{G}^P\right|$, and the runtime complexity combining Algorithms \ref{alg:Entry} and \ref{alg:Pairing}. The first term $\left|\mathcal{G}^I\right|$ is $m \mathcal{L}' D$ where $m$ comes from Lemma \ref{Thm:FiniteSetMedtoUn3}, $\mathcal{L}'$ is defined as in  Section \ref{sec:theorySuppId}
, and $D$, the number of changes in $j\in[D]$, therein. We emphasize that $m$ samples are utilized repeatedly in order to implicitly construct $L$ proxy functions combined with different Gaussian weights so that $L$ does not affect the sampling complexity but affects the runtime complexity. The second term $\left|\mathcal{G}^P \right|$ is $m_1m_2(D-2)+m_1$ where $m_1$ and $m_2$ are from Theorem \ref{thm:pairing}, and $D-2$ is the number of changes in $j \in [D-1]\setminus \{0\}$. We remind readers that $C_{D-1}=1$ implied by $\mathcal{Z}_{D-1}=\emptyset$ so that $m_1$ samples are utilized instead of $m_1m_2$ when $j=D-1$. Now, we consider the runtime complexity. The first term in runtime complexity is $\mathcal{O}(mL\mathcal{L}D)$ from Algorithm \ref{alg:Entry} where $mL$ computations are taken to implicitly construct the $L$ proxy functions, $\mathcal{O}(\mathcal{L})$ is defined as in Section \ref{sec:theorySuppId}
, and $D$ comes from the for loop from Algorithm \ref{alg:Entry}. The second term in runtime complexity is $4s^2 \left( m_1m_2(D-2)+m_1 \right)+ 4s^2 m_1\sum_{j=1}^{D-1}(j+1)$ from Algorithm \ref{alg:Pairing} since $4s^2$ energy estimates are calculated using the $m_1m_2$ samples for each $j\in[D-1]\setminus\{0\}$ and $m_1$ samples for $j=D-1$, and the evaluations of $T_{j;\widetilde{\vect{n}}}(\vect{w}_{j,\ell})$ are calculated for all $\widetilde{\vect{n}} \in \mathcal{P}_{j}$, $\forall j\in[D]\setminus\{0\}$. 
Here, it is assumed that it takes $\mathcal{O}(1)$ runtime to evaluate each $i^{\rm th}$ component $T_{i;\widetilde{{n}}_i}(({w}_{j,\ell})_i)$ of $T_{j;\widetilde{\vect{n}}}(\vect{w}_{j,\ell})$.
\end{proof}

\section{Empirical Evaluation} \label{Empirical Evaluation}
\setcounter{equation}{0}

In this section Algorithm~\ref{alg:main} is evaluated numerically and compared to CoSaMP \cite{needell2009cosamp}, its superlinear-time progenitor.  Both Algorithm~\ref{alg:main} and CoSaMP were implemented in MATLAB for this purpose.  All code used to produce the plots below is publicly available at \cite{MarksCodePage}.  

\subsection{Experimental Setup}

We consider two kinds of tensor product basis functions below:  Fourier and Chebyshev.  In both cases each parameter, $M$, $D$, and $s$, is changed while the others remain fixed so that we can see how each parameter affects the runtime, sampling number, memory usage, and error of both Algorithm~\ref{alg:main} and CoSaMP.  For all experiments below $d = D$ so that $\mathcal{I} = [M]^D$. 
Every data point in every plot below was created using $100$ different randomly generated trial signals, $f$, of the form
\begin{equation}
f(\vect{x})=\sum_{\vect{n}\in\mathcal{S}} c_{\vect{n}} T_{\vect{n}}(\vect{x}),
\label{equ:TestfuncEE}
\end{equation}
where each function's support set, $\mathcal{S}$, contained $s$ index vectors $\vect{n} \in [M]^D$ each of which was independently chosen uniformly at random from $[M]^D$, 
and where each function's coefficients  $c_{\vect{n}}$ were each independently chosen uniformly at random from the unit circle in the complex plane (i.e., each $c_{\vect{n}} = \mathbbm{e}^{\mathbbm{i}\theta}$ where $\theta$ is chosen uniformly at random from $[0,2\pi]$).  In the Fourier setting the basis functions $T_{\vect{n}}(\vect{x})$ in \eqref{equ:TestfuncEE} were chosen as per \eqref{equ:FourierProd}, and in the Chebyshev setting as per \eqref{equ:ChebProd}.  

Below a {\it trial} will always refer to the execution of Algorithm~\ref{alg:main} and/or CoSaMP on a particular randomly generated trial function $f$ in \eqref{equ:TestfuncEE}.  A {\it failed trail} will refer to any trial where either CoSaMP or Algorithm~\ref{alg:main} failed to recover the correct support set $\mathcal{S}$ for $f$.  Herein the parameters of both Algorithm~\ref{alg:main} and CoSaMP were tuned to keep the number of failed trials down to less than 10 out of the total 100 used to create every data point in every plot.  Finally, in all of our plots Algorithm~\ref{alg:main} is graphed with red, and CoSaMP with blue. 

\subsection{Experiments with the Fourier Basis for $\mathcal{D} = [0,1]^D$}
\label{sec:5.2}

\begin{figure}[t!] 
\subfloat[\label{fig:1.1}]
  {\includegraphics[width=.5\linewidth]{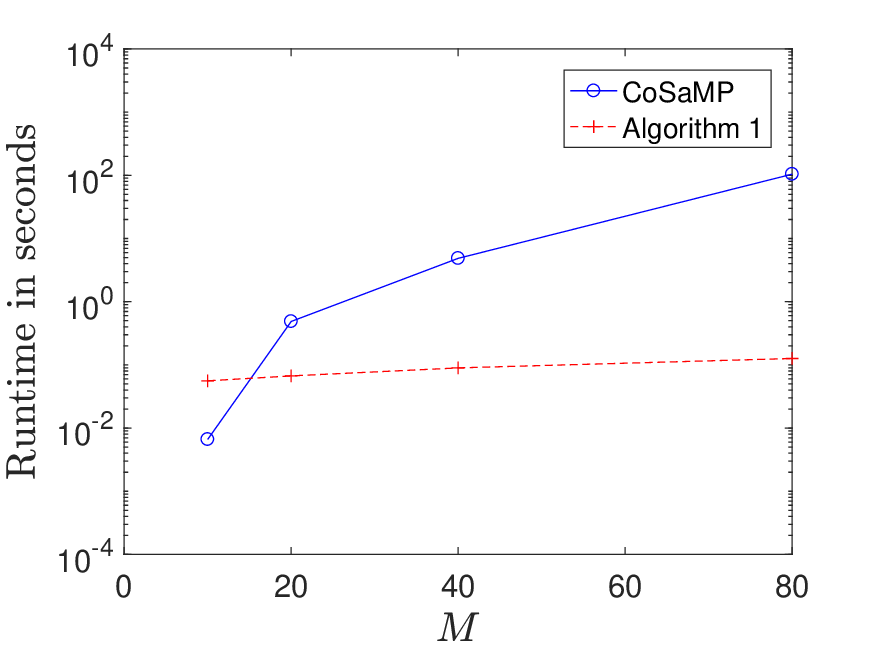}}
  \hfill
\subfloat[\label{fig:1.2}]
  {\includegraphics[width=.5\linewidth]{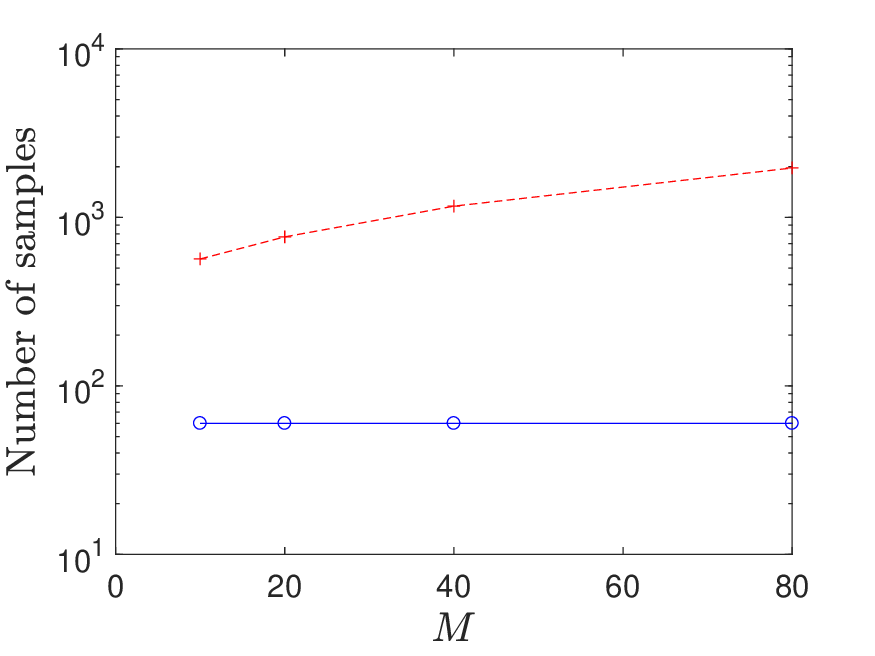}}
\caption{Fourier basis, $M\in\{10,20,40,80\},\ D=4,\ s=5$}
\label{fig:1}
\end{figure}

\begin{figure}[t!]
\subfloat[\label{fig:2.1}]
  {\includegraphics[width=.5\linewidth]{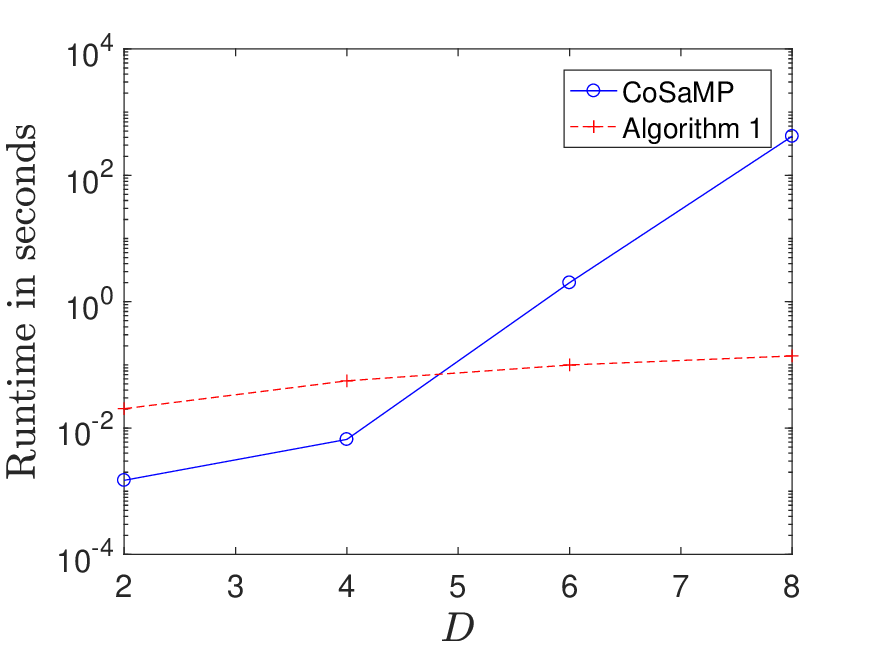}}
  \hfill
\subfloat[\label{fig:2.2}]
  {\includegraphics[width=.5\linewidth]{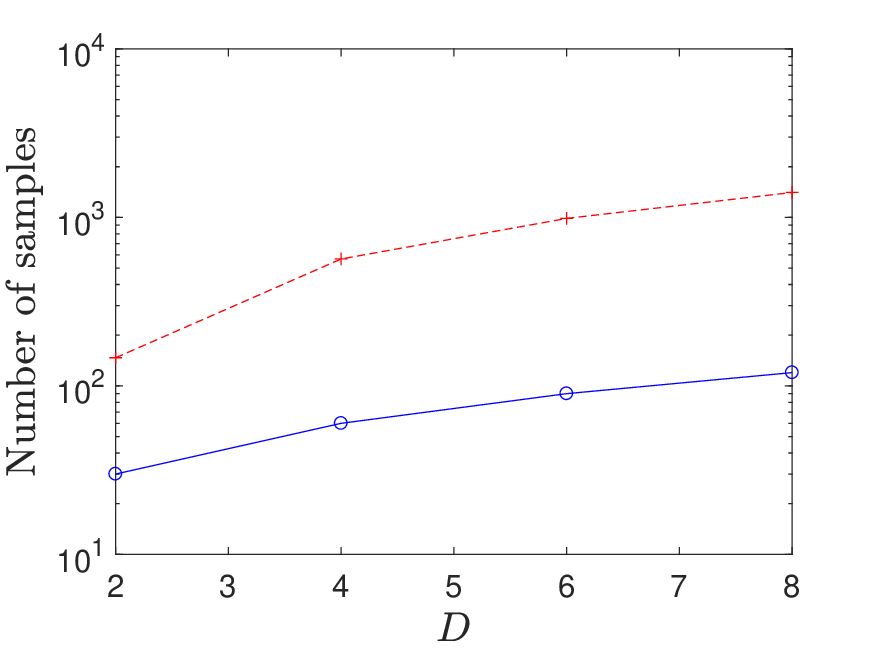}}
\caption{Fourier basis, $M=10,\ D=\{2,4,6,8\},\ s=5$}
\label{fig:2}
\end{figure}

\begin{figure}[t!]
\subfloat[\label{fig:3.1}]
  {\includegraphics[width=.5\linewidth]{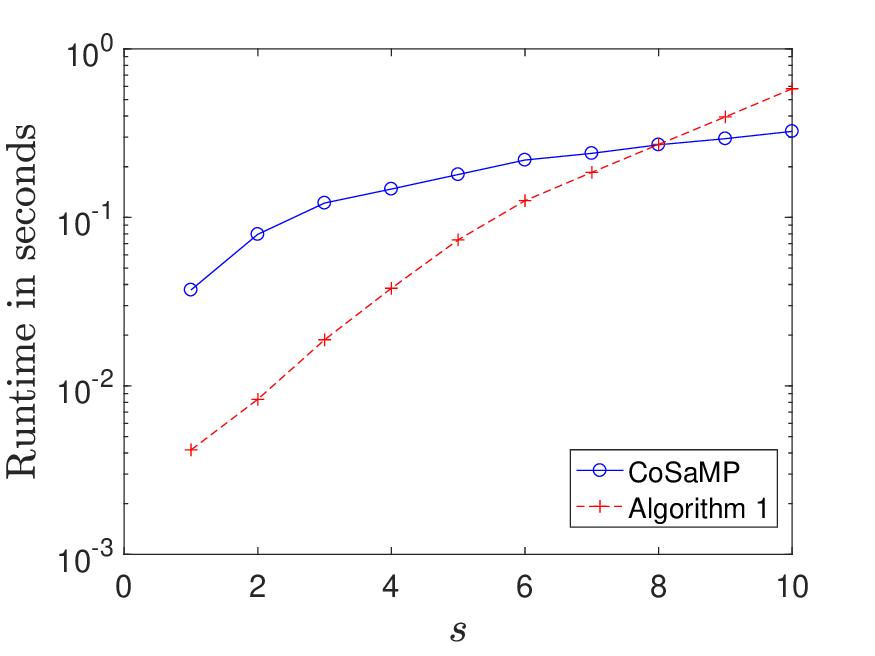}}
  \hfill
\subfloat[\label{fig:3.2}]
  {\includegraphics[width=.5\linewidth]{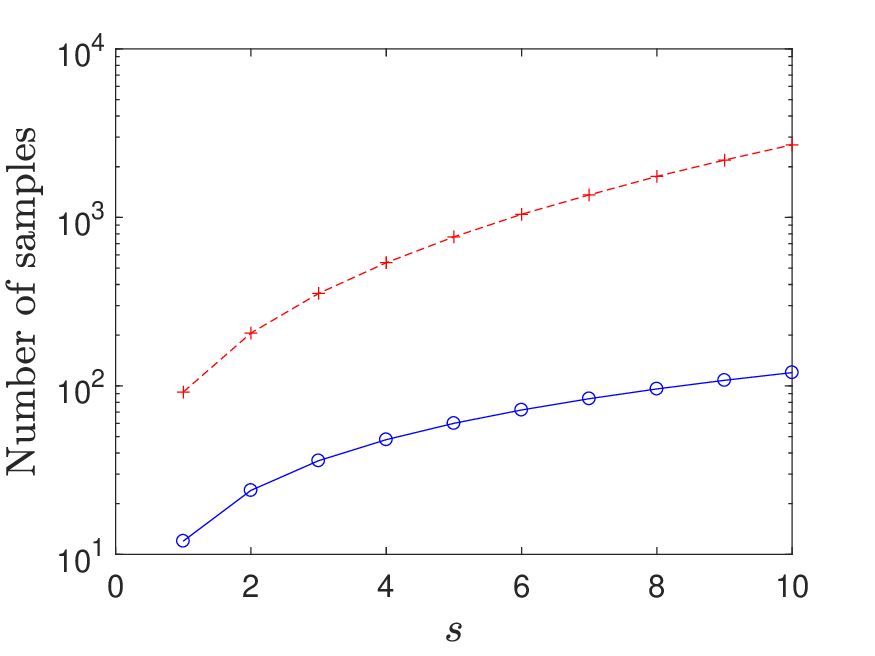}}
\caption{Fourier basis, $M=20,\ D=4,\ s=\{1,2,3,\cdots,10\}$}
\label{fig:3}
\label{fig: red line: our method, blue line: CoSaMP}
\end{figure}

\begin{figure}[t!]
	\includegraphics[width=1\linewidth]{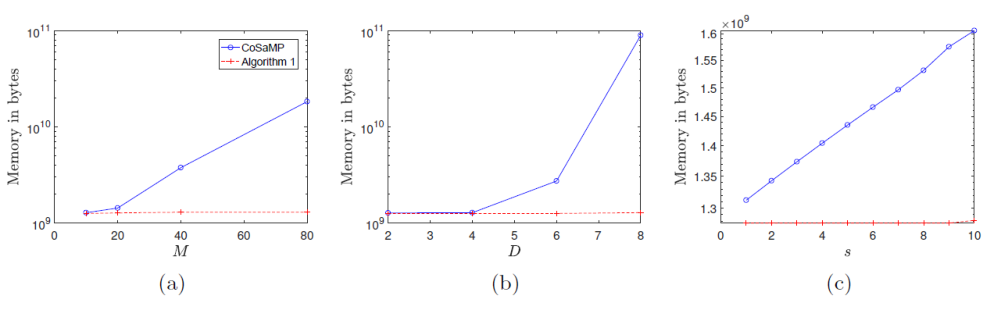}
	\caption{Fourier basis, memory usage when varying $M$, $D$ and $s$}
	\label{fig:15}
	\label{fig: red line: our method, blue line: CoSaMP}
\end{figure}

In this section we consider the Fourier tensor product basis 
\begin{equation}
T_{\vect{n}}(\vect{x}) := \prod_{j=0}^{D-1} \mathbbm{e}^{2\pi \mathbbm{i} n_j x_j}
\label{equ:FourierProd}
\end{equation}
whose orthogonality measure is the Lebesgue measure on $\mathcal{D}=[0,1]^D$.  In Figures \ref{fig:1}, \ref{fig:2}, \ref{fig:3} and \ref{fig:15}, results are shown for approximating Fourier-sparse trial functions \eqref{equ:TestfuncEE} using noiseless samples $\vect{y}$.  In Figures \ref{fig:1} and 6a, the parameter $M$ changes over the set $\{10,20,40,80\}$ while $D=4$ and $s=5$ are held constant.  In Figure \ref{fig:1.1}, the average runtime (in seconds) is shown as $M$ changes. The average here is calculated over all $100$ trials at each data point excluding any failed trials.  As we can see, the runtime of Algorithm~\ref{alg:main} grows very slowly as $M$ grows, whereas the runtime grows fairly quickly for CoSaMP since its measurement matrix's size increases significantly as $M$ grows. Figure \ref{fig:1.2} shows the number of samples used by both CoSaMP and Algorithm~\ref{alg:main}. We can see that Algorithm~\ref{alg:main} requires more samples due mainly to its support identification's pairing step.  
On the other hand, we can see in Figure 6a that the memory usage of CoSaMP grows very rapidly compared to the slow growth of Algorithm~\ref{alg:main}'s memory usage.  This exemplifies the tradeoff between Algorithm~\ref{alg:main} and CoSaMP -- Algorithm~\ref{alg:main} uses more samples than CoSaMP in order to reduce its runtime complexity and memory usage for large $D$ and $M$.
Finally, both methods produce outputs whose average errors (over the trials where they don't fail) are on the order of $10^{-15}$ to $10^{-14}$, which is also observed in all other experiments in Sections \ref{sec:5.2}, \ref{sec:5.3} and \ref{sec:5.4}.

In Figures \ref{fig:2} and 6b, the number of dimensions, $D$, changes while both $M=10$ and $s=5$ are held fixed. Here, we can clearly see the advantage of Algorithm~\ref{alg:main} for functions of many variables. The runtime and memory usage of CoSaMP blow up quickly as $D$ increases due to the gigantic matrix-vector multiplies it requires to identify support.  Algorithm~\ref{alg:main}, on the other hand, shows much slower growth in runtime and memory usage. When $D=10$, for example, CoSaMP requires terabytes of memory whereas Algorithm~\ref{alg:main} requires only a few gigabytes. In Figures \ref{fig:3} and 6c, $s$ varies in $\{1,2,3,\cdots,10\}$ while $M=20$ and $D=4$ are fixed. Since Algorithm~\ref{alg:main} has $\mathcal{\tilde{O}}(s^5)$ scaling\footnote{The $\mathcal{\tilde{O}}$ complexity notation here neglects all logarithmic factors while simultaneously holding $D$, $K$, $d$, $\eta$, $\| {\bf c}_f \|_2$ and $\mathcal{L}$ constant.} in runtime due to its pairing step, it suffers as sparsity increases more quickly than CoSaMP does. Note that the crossover point is around $s=8$, so that  Algorithm~\ref{alg:main} appears to be slower than CoSaMP for all $s>8$ when $M = 20$ and $D = 4$. Though ``only polynomial in $s$'', it is clear from these experiments that the runtime scaling of Algorithm~\ref{alg:Pairing} in $s$ needs to be improved before the methods proposed herein can become truly useful in practice.

\subsection{Experiments with the Chebyshev Basis for $\mathcal{D} = [-1,1]^D$}
\label{sec:5.3}

\begin{figure}[t!] 
\subfloat[\label{fig:4.1}]
  {\includegraphics[width=.5\linewidth]{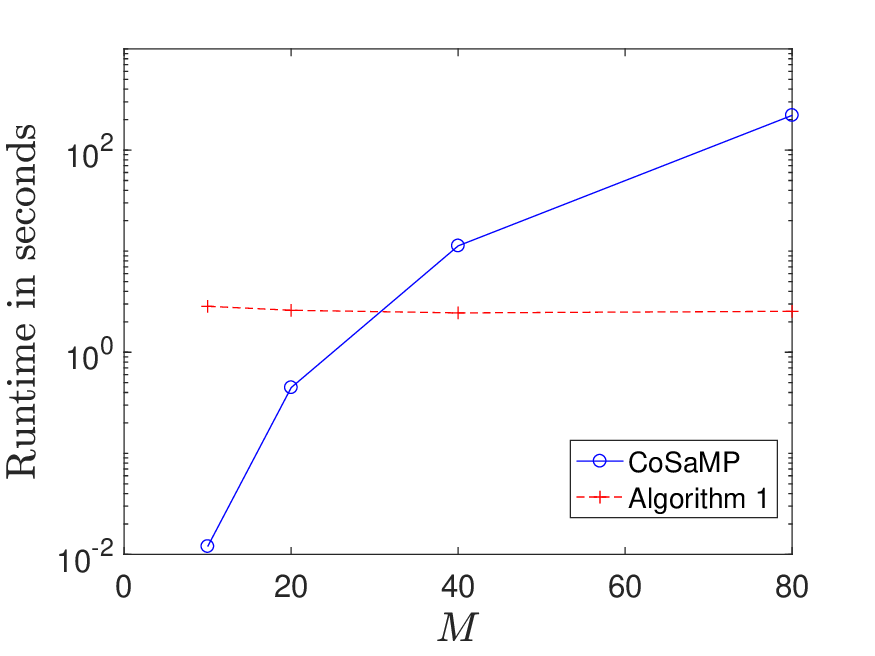}}
  \hfill
\subfloat[\label{fig:4.2}]
  {\includegraphics[width=.5\linewidth]{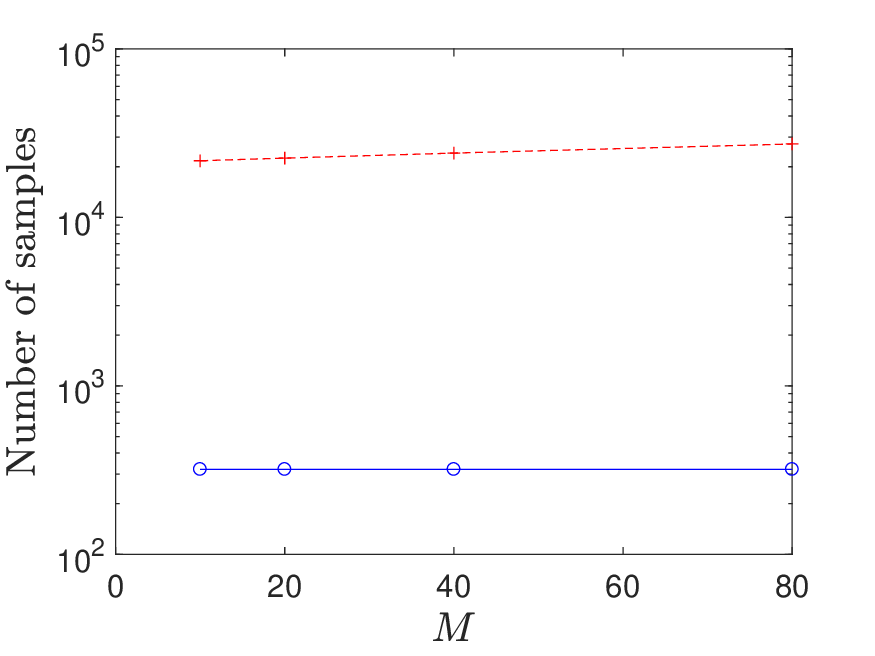}}
\caption{Chebyshev basis, $M\in\{10,20,40,80\},\ D=4,\ s=5$}
\label{fig:4}
\end{figure}

\begin{figure}[t!]
\subfloat[\label{fig:5.1}]
  {\includegraphics[width=.5\linewidth]{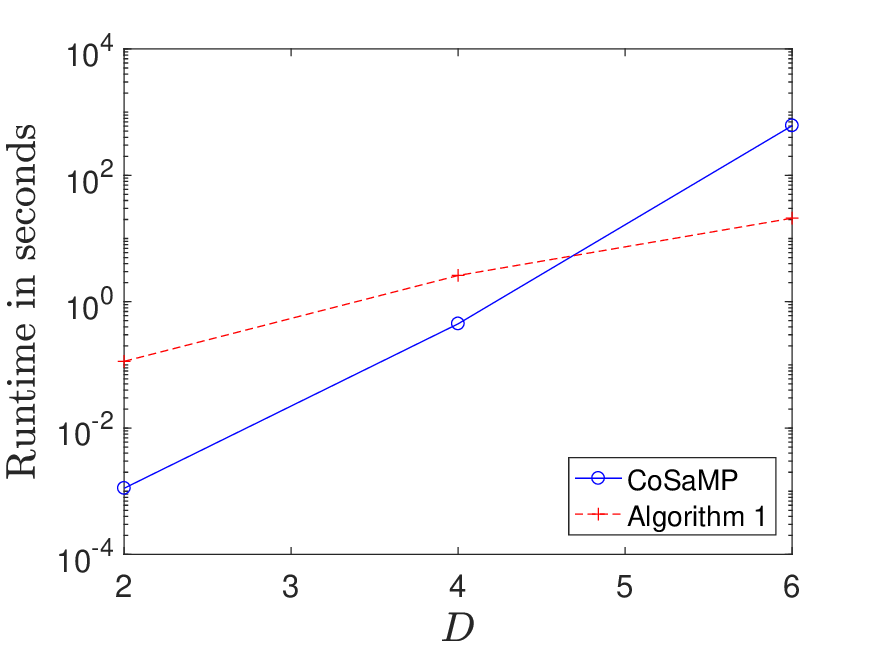}}
  \hfill
\subfloat[\label{fig:5.2}]
  {\includegraphics[width=.5\linewidth]{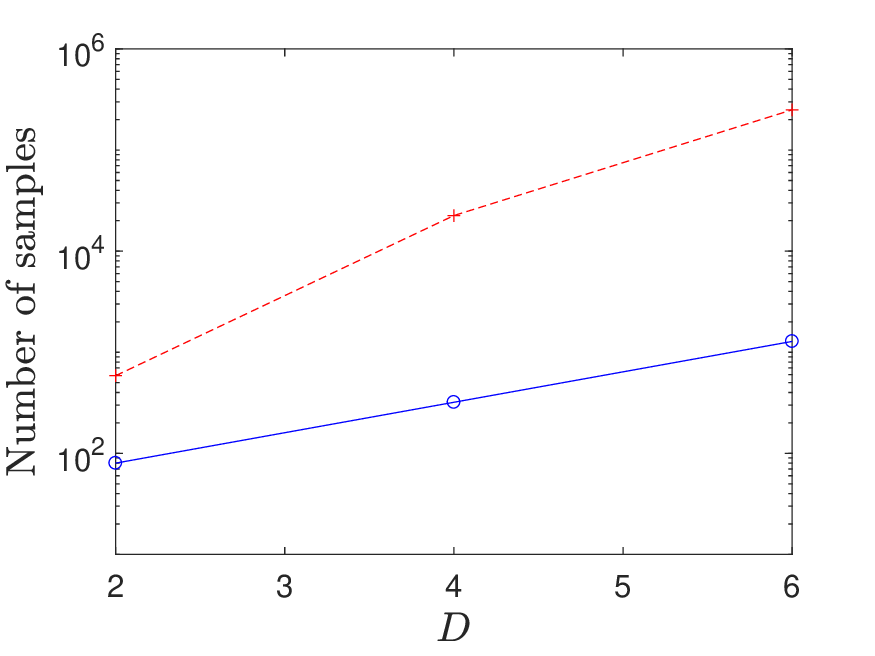}}
\caption{Chebyshev basis, $M=20,\ D=\{2,4,6\},\ s=5$}
\label{fig:5}
\end{figure}

\begin{figure}[t!]
\subfloat[\label{fig:6.1}]
  {\includegraphics[width=.5\linewidth]{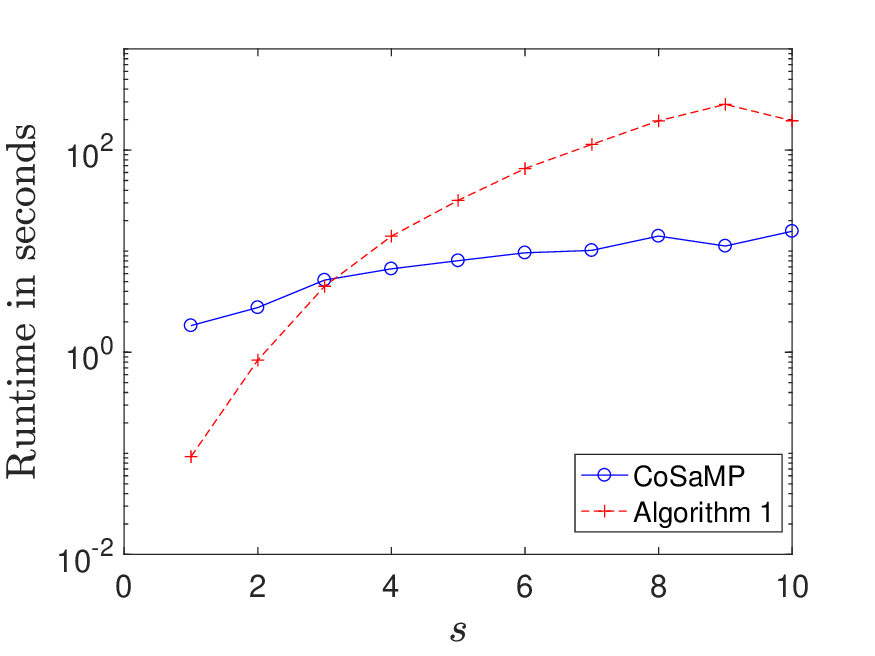}}
  \hfill
\subfloat[\label{fig:6.2}]
  {\includegraphics[width=.5\linewidth]{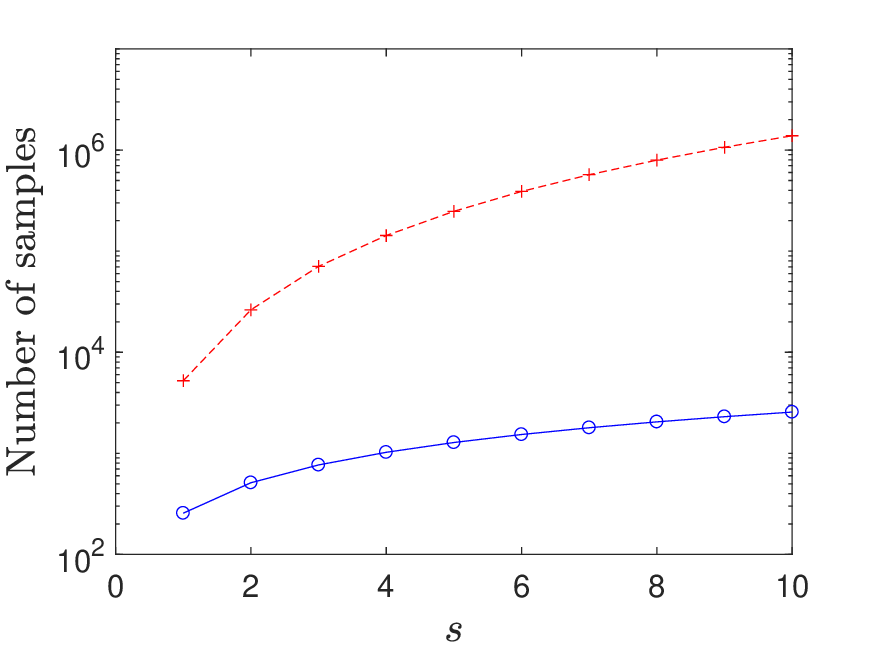}}
\caption{Chebyshev basis, $M=10,\ D=6,\ s=\{1,2,3,\cdots,10\}$}
\label{fig:6}
\label{fig: red line: our method, blue line: CoSaMP}

\end{figure}

\begin{figure}[t!]
	\includegraphics[width=1\linewidth]{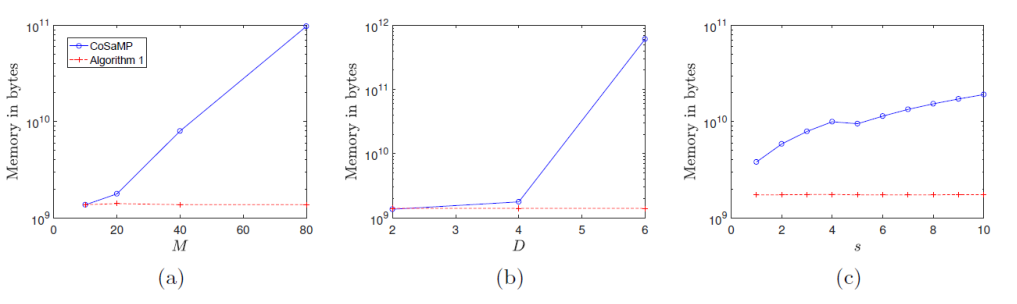}
	\caption{Chebyshev basis, memory usage when varying $M$, $D$, and $s$}
	\label{fig:16}
	\label{fig: red line: our method, blue line: CoSaMP}
	
\end{figure}

In this section we consider the Chebyshev tensor product basis 
\begin{equation}
T_{\vect{n}}(\vect{x}) := {2^{\frac{1}{2}\| \vect{n} \|_0}} \prod_{j=0}^{D-1}\cos \left(n_j \arccos (x_j) \right) 
\label{equ:ChebProd}
\end{equation}
whose orthogonality measure is $d\vect{\nu}=\otimes_{j\in[D]}\frac{dx_j}{\pi \sqrt{1-x_j^2}}$ on $\mathcal{D} = [-1,1]^D$.  Runtime and sampling complexity graphs are provided in Figures \ref{fig:4}, \ref{fig:5} and \ref{fig:6} as $M$, $D$, and $s$ vary, respectively. In Figure \ref{fig:16}, memory usage is also graphed for each $M$, $D$ and $s$ variation. Since this Chebyshev product basis has a BOS constant of $K = 2^{D/2}$, both CoSaMP and Algorithm~\ref{alg:main} suffer from a mild exponential growth in sampling, runtime, and memory complexity as $D$ increases (recall that $d = D$ for these experiments).  This leads to markedly different overall performance for the Chebyshev basis than what is observed for the Fourier basis where $K=1$.  A reduction in performance from the Fourier case for both methods is clearly visible, e.g., in Figure~\ref{fig:5}.  Nonetheless, Algorithm~~\ref{alg:main} demonstrates the expected reduced runtime and sampling complexity dependence on $M$ and $D$ over CoSaMP in Figures \ref{fig:4} and \ref{fig:5}, as well as a striking reduction in its required memory usage over CoSaMP  in Figure \ref{fig:16} even when its runtime complexity is worse in Figure \ref{fig:6.1}.  Unfortunately, the $\mathcal{\tilde{O}}(s^5)$ runtime dependance of Algorithm~\ref{alg:Pairing} on sparsity is again clear in Figure \ref{fig:6.1} leading to a crossover point of Algorithm~\ref{alg:main} with CoSaMP at only $s=3$ when $M = 10$ and $D = 6$.  This again clearly marks the pairing process of Algorithm~\ref{alg:Pairing} as being in need of improvement.

\subsection{Experiments for Larger Ranges of Sparsity $s$ and Dimension $D$}
\label{sec:5.4}

\begin{figure}[t!] 

\subfloat[\label{fig:7.1}]
  {\includegraphics[width=.5\linewidth]{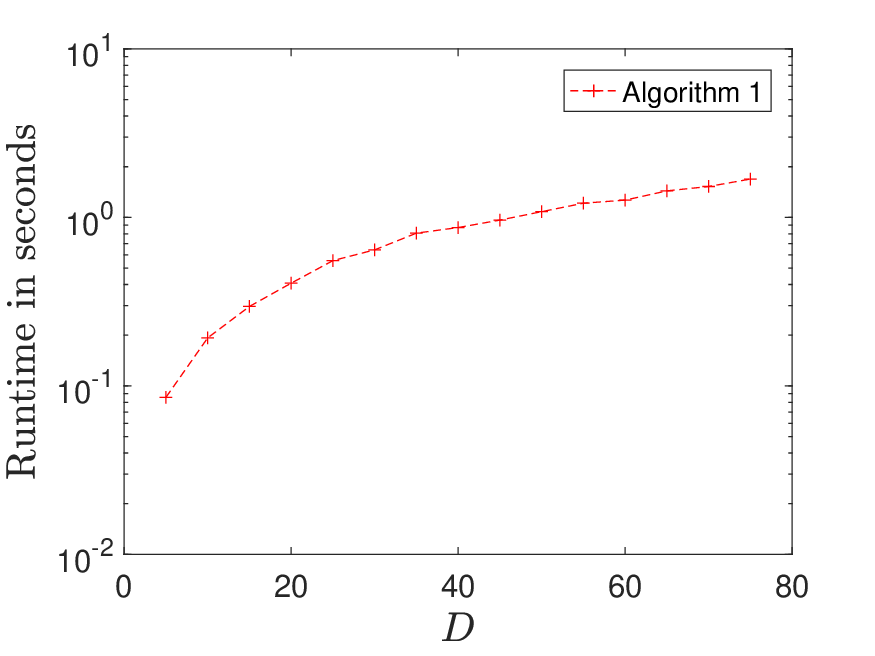}}\hfill
\subfloat[\label{fig:7.2}]
  {\includegraphics[width=.5\linewidth]{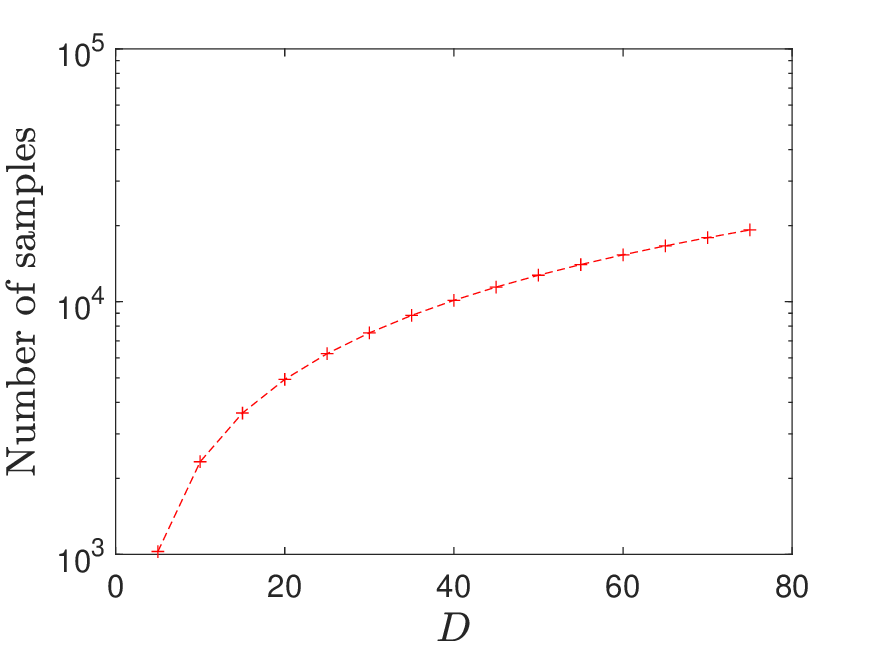}}
\caption{Fourier basis, $M=20,\ D\in \{5,10,15,20,\cdots,75\}, \ s=5$}
\label{fig:7}

\medskip

\subfloat[\label{fig:8.1}]
  {\includegraphics[width=.5\linewidth]{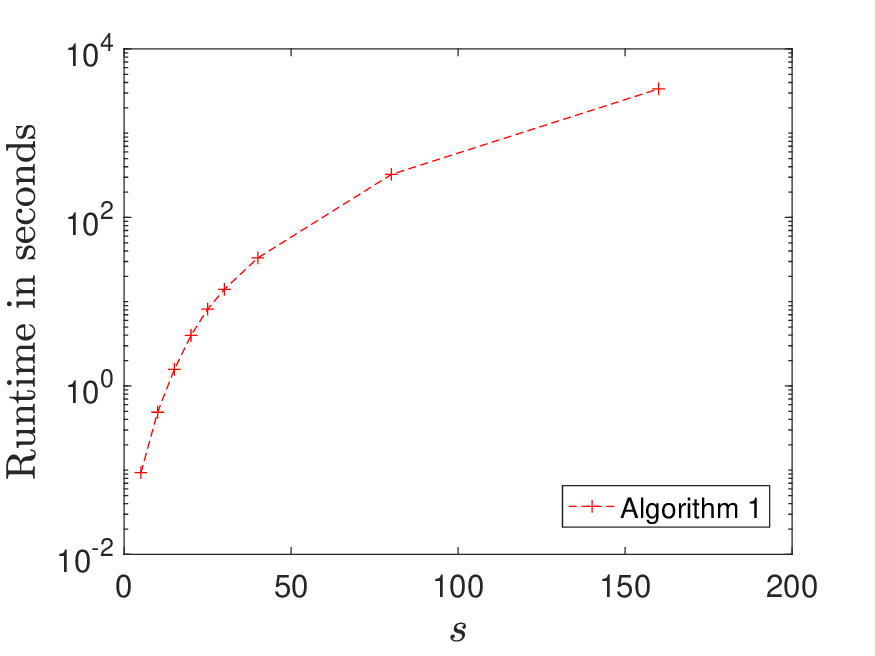}}\hfill
\subfloat[\label{fig:8.2}]
  {\includegraphics[width=.5\linewidth]{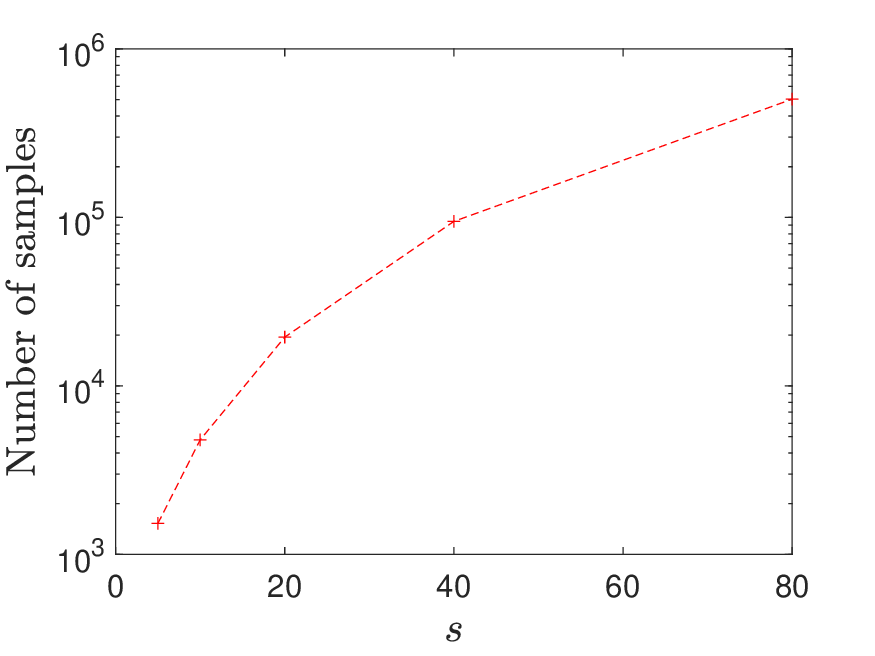}}
\caption{Fourier basis, $M=40,\ D=5, \ s\in\{5,10,20,40,80\} $}
\label{fig:8}

\label{fig: red line: our method, blue line: CoSaMP}
\end{figure}

\begin{figure}[t!] 

\subfloat[\label{fig:9.1}]
  {\includegraphics[width=.5\linewidth]{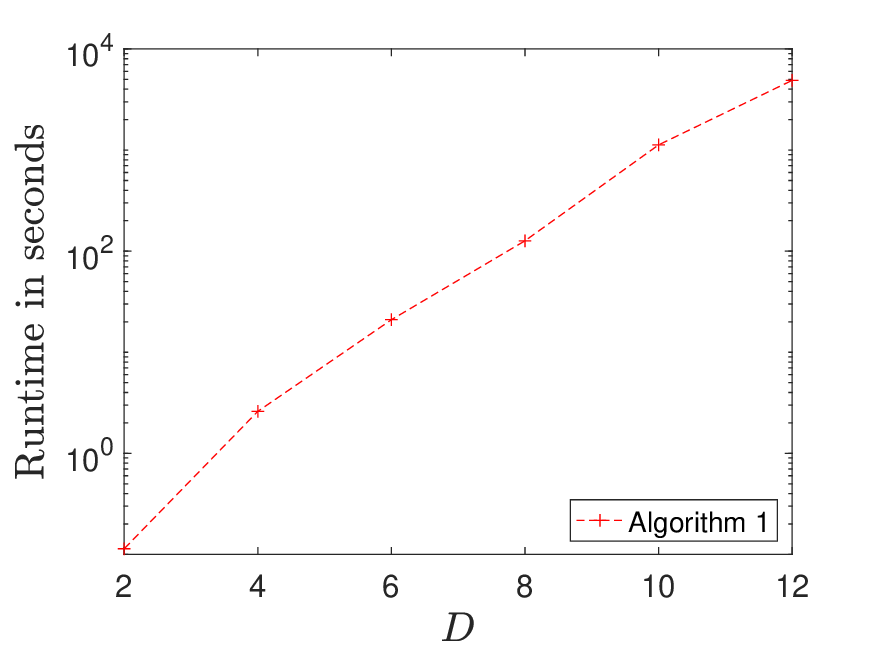}}\hfill
\subfloat[\label{fig:9.2}]
  {\includegraphics[width=.5\linewidth]{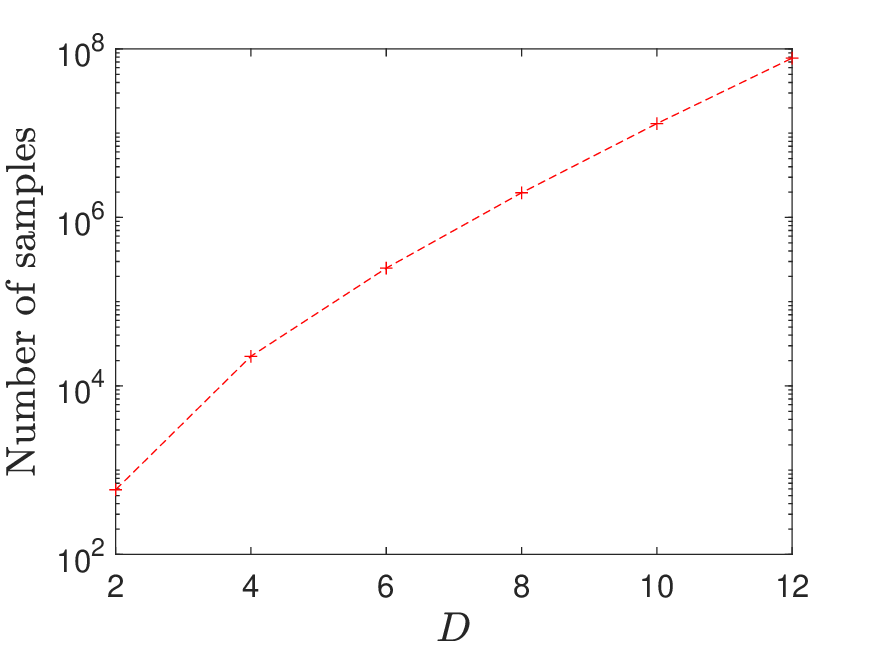}}
\caption{Chebyshev basis, $M=20,\ D\in \{2,4,6,\cdots,12\}, \ s=5$}
\label{fig:9}

\medskip

\subfloat[\label{fig:10.1}]
  {\includegraphics[width=.5\linewidth]{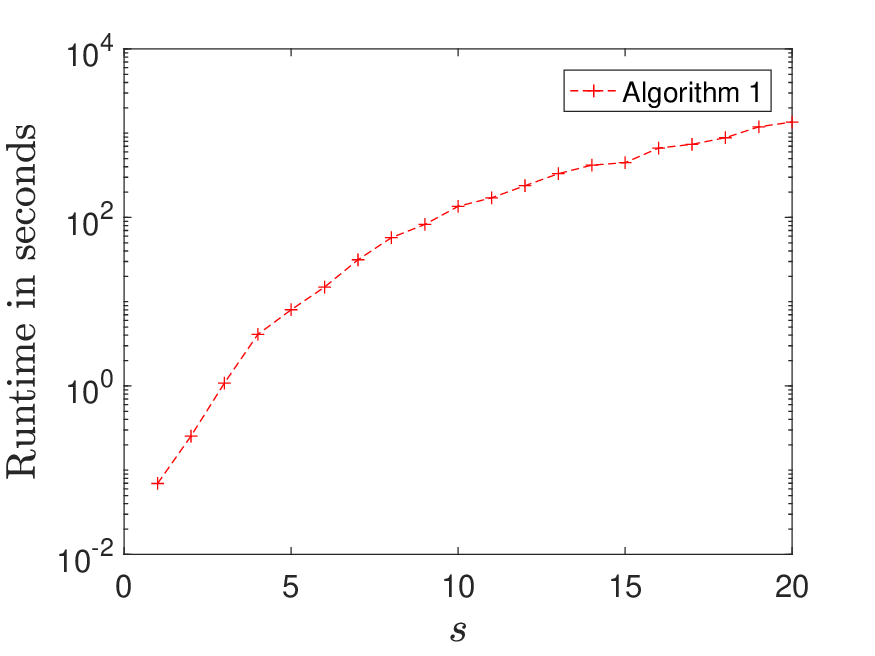}}\hfill
\subfloat[\label{fig:10.2}]
  {\includegraphics[width=.5\linewidth]{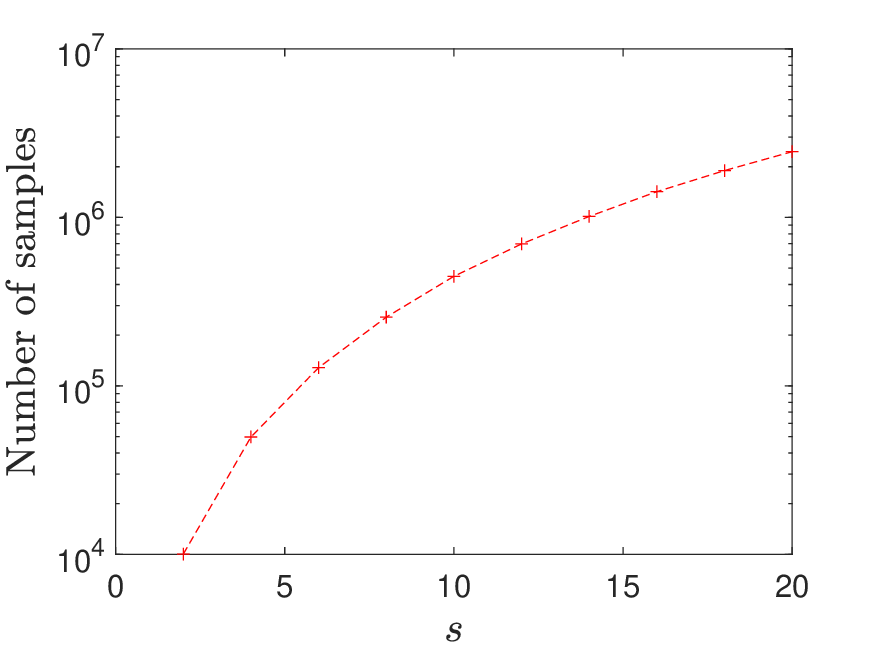}}
\caption{Chebyshev basis, $M=40,\ D=5, \ s\in\{2,4,6,\cdots,20\} $}
\label{fig:10}

\label{fig: red line: our method, blue line: CoSaMP}
\end{figure}

Figures \ref{fig:7} and \ref{fig:8} explore the performance of Algorithm~\ref{alg:main} on Fourier sparse functions for larger ranges of $D$ and $s$, respectively.  In Figure \ref{fig:7}, a function of $D=75$ variables can be recovered in just a few seconds when it is sufficiently sparse in the Fourier basis.  It is worth pointing out here that when $D=75$ the BOS in question contains $20^{75} \sim 10^{97}$ basis functions, significantly more than the approximately $10^{82}$ atoms estimated to be in the observable universe.  We would like to emphasize that Algorithm~\ref{alg:main} is solving problems in this setting that are simply too large to be solved efficiently, if at all, using standard superlinear-time compressive sensing approaches due to their memory requirements when dealing with such extremely large bases.  
Figure \ref{fig:8} also shows that functions with larger Fourier sparsities, $s$, than previously considered (up to $s = 160$) can be be recovered in about an hour or less from a BOS of size $40^5 = 102,400,000$.  

In Figures \ref{fig:9} and \ref{fig:10} we consider the functions which are sparse in the Chebyshev product basis. Again, due to the larger BOS constant of the Chebyshev basis, the $D$ and $s$ ranges that our method can deal efficiently are smaller than in the Fourier case.  When $D$ is $12$ or $s$ is $20$ in Figures \ref{fig:9} and \ref{fig:10}, respectively, for example, it takes a few hours for Algorithm~\ref{alg:main} to finish running. We again remind the readers that standard superlinear-time compressive sensing methods cannot solve with such high dimensional problems at all, however, on anything less than a world class supercomputer due to their memory requirements.  In the Figure \ref{fig:9} experiments the Chebyshev BOS contains $20^{12} \sim 10^{15}$ basis functions when $D = 12$.  In the Figure \ref{fig:10} experiments the BOS contains just over $100$ million basis functions.

\subsection{Recovery of Functions from Noisy Measurements}

\begin{figure}[t!] 

\subfloat[\label{fig:11.1}]
  {\includegraphics[width=.5\linewidth]{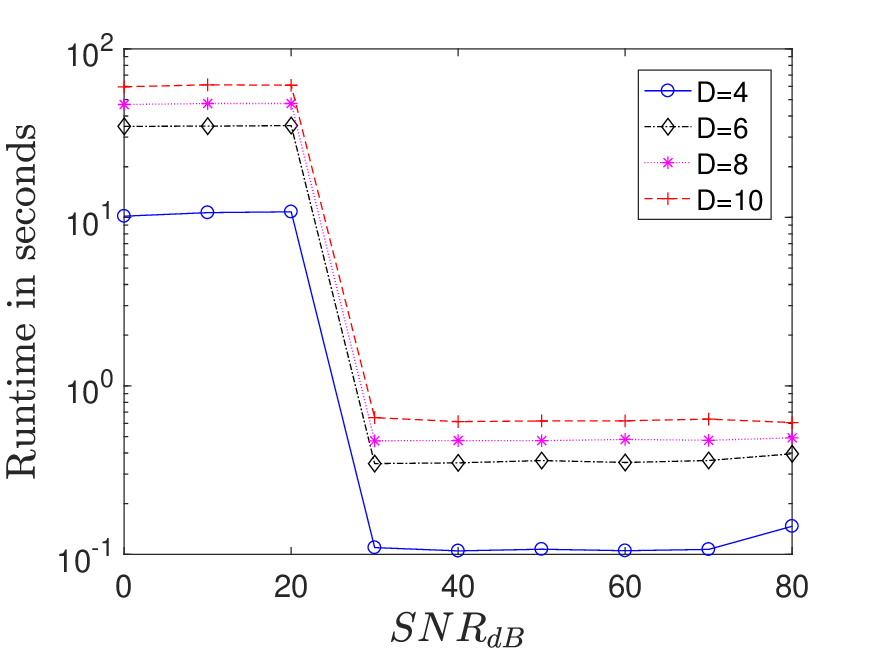}}\hfill
\subfloat[\label{fig:11.2}]
  {\includegraphics[width=.5\linewidth]{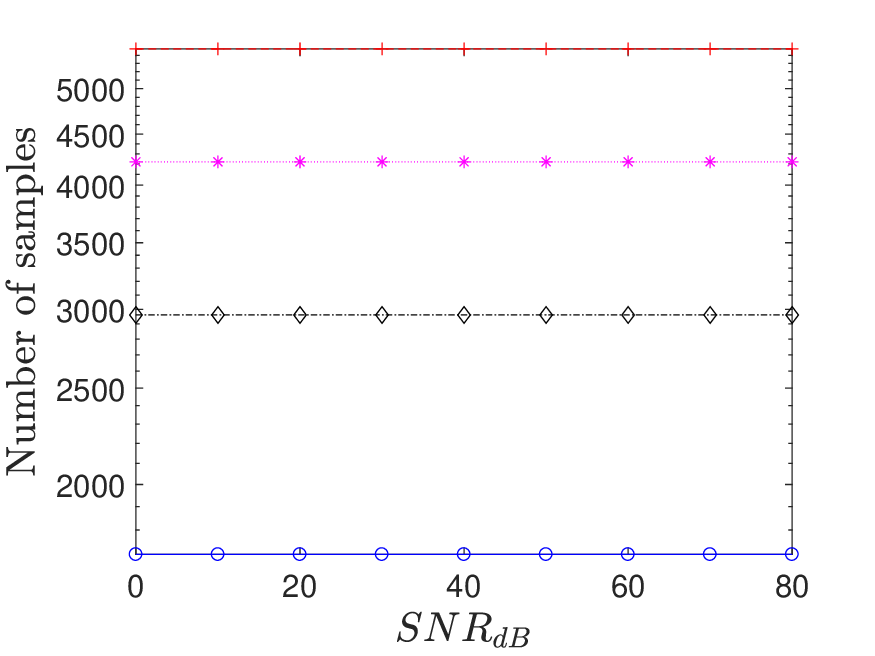}}\hfill
\subfloat[\label{fig:11.3}]
  {\includegraphics[width=.5\linewidth]{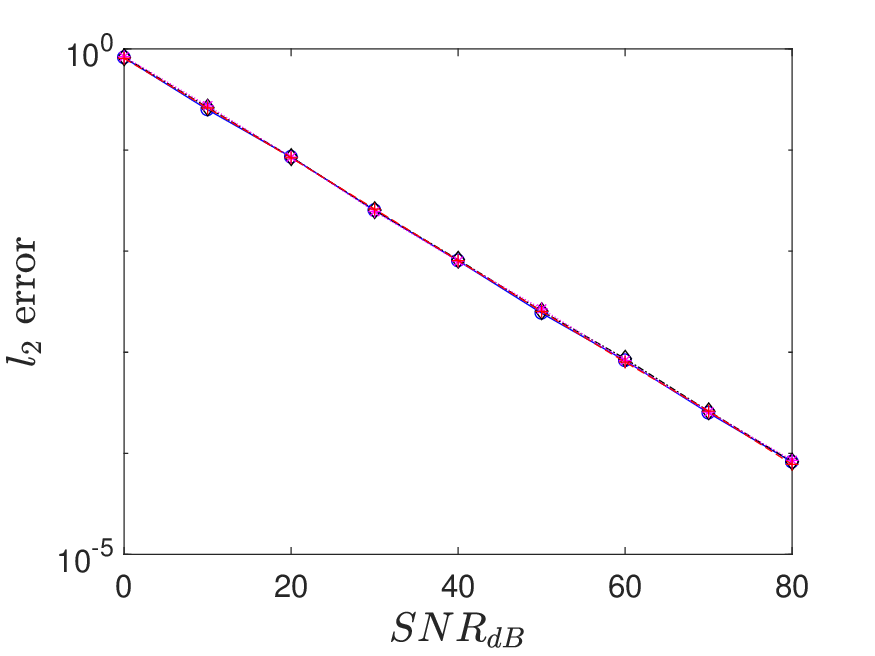}}
\subfloat[\label{fig:11.4}]
  {\includegraphics[width=.5\linewidth]{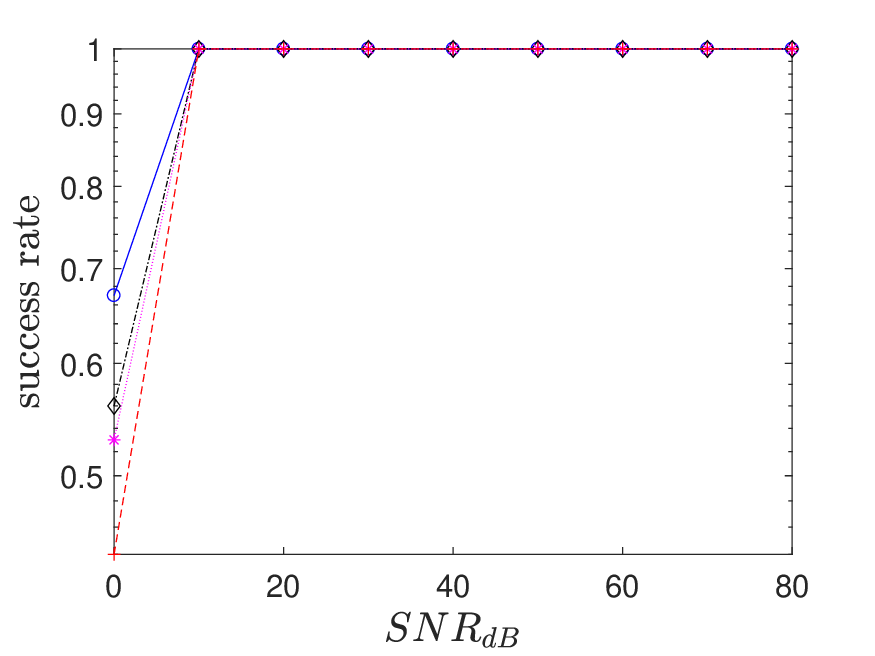}}
\caption{Algorithm \ref{alg:main}, Fourier basis, $M=10,\ D\in\{4,6,8,10\}, \ s=5 ,\ {\rm SNR}_{\rm dB} \in\{0,10,20,\cdots, 80\}$}
\label{fig:11}

\label{fig: red line: our method, blue line: CoSaMP}
\end{figure}

\begin{figure}[t!] 

\subfloat[\label{fig:12.1}]
  {\includegraphics[width=.5\linewidth]{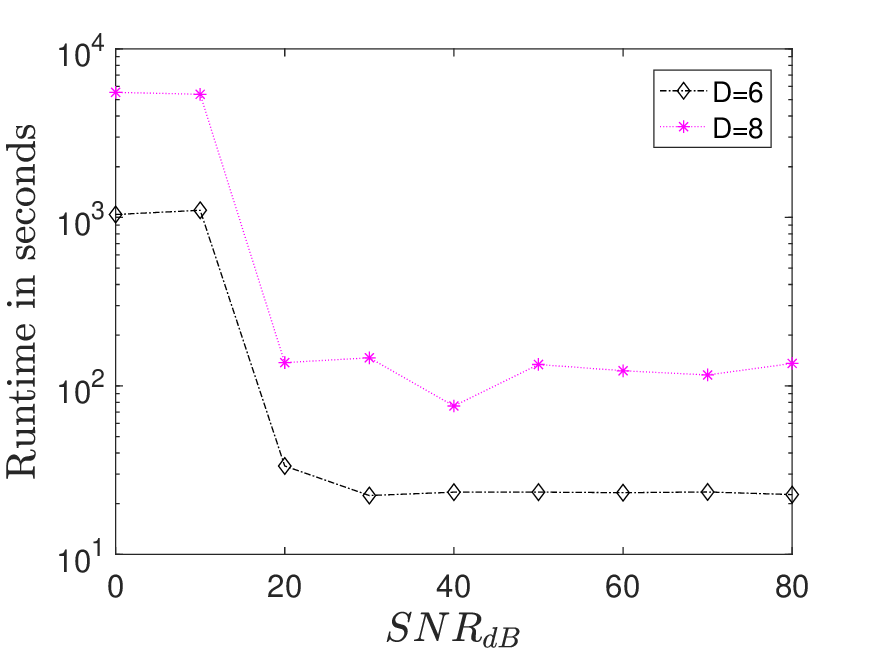}}\hfill
\subfloat[\label{fig:12.2}]
  {\includegraphics[width=.5\linewidth]{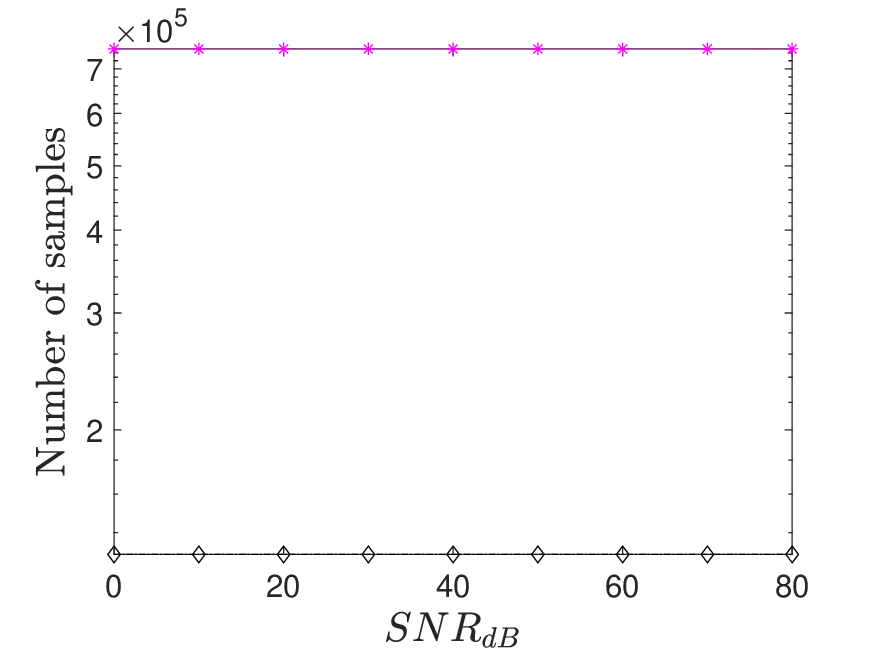}}\hfill
\subfloat[\label{fig:12.3}]
  {\includegraphics[width=.5\linewidth]{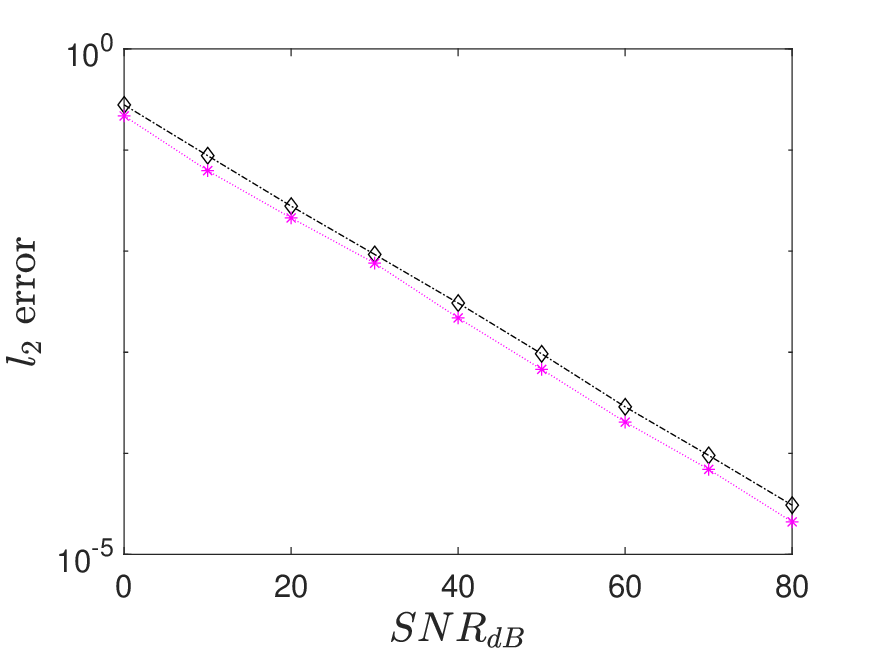}}
\subfloat[\label{fig:12.4}]
  {\includegraphics[width=.5\linewidth]{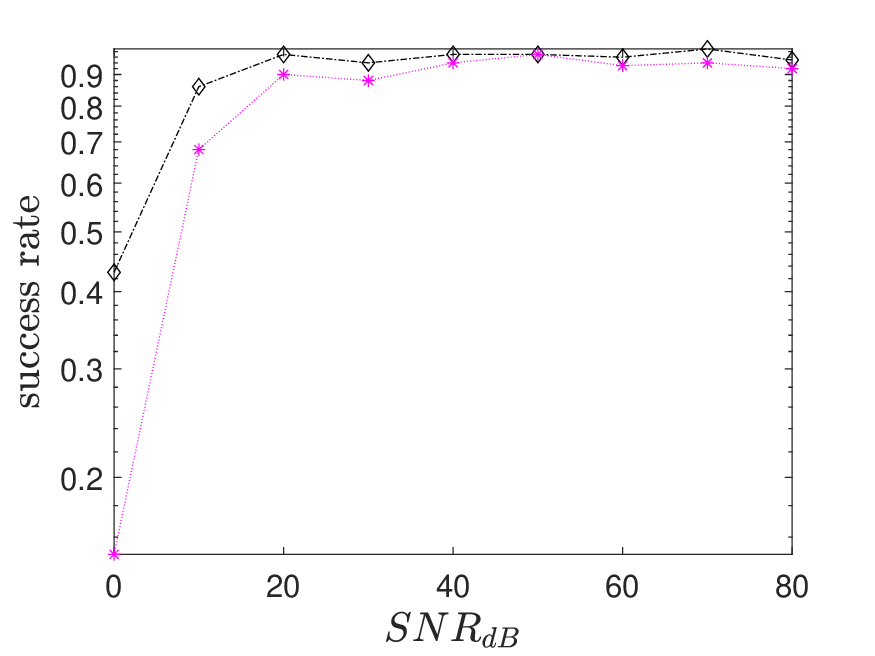}}
\caption{Algorithm \ref{alg:main}, Chebyshev basis, $M=10,\ D\in\{6,8\}, \ s=5 ,\ {\rm SNR}_{\rm dB} \in\{0,10,20,\cdots, 80\}$}
\label{fig:12}

\label{fig: red line: our method, blue line: CoSaMP}
\end{figure}

In Figures~\ref{fig:11}~and~\ref{fig:12} we further consider exactly sparse trial functions \eqref{equ:TestfuncEE} whose function evaluations are contaminated with Gaussian noise.  That is, we provide Algorithm~\ref{alg:main} with noisy samples 
$$\vect{y}' ~=~ \vect{y} + \vect{g}' ~=~ \vect{y} + \sigma  \frac{\| \vect{y} \|^2}{\| \vect{g} \|_2} \vect{g}$$
where $\vect{y}$ contains noiseless samples from each $f$ as per \eqref{Def:SamplesFromf}, $\vect{g} \sim \mathcal{N}(\vect{0},I)$, and $\sigma \in \mathbb{R}^+$ is used to control the Signal to Noise Ratio (SNR) defined herein by
$${\rm SNR}_{\rm dB} := 10 \log_{10} \left( \frac{\| \vect{y} \|^2_2}{\| \vect{g}' \|^2_2 }\right) ~=~ -10 \log_{10} \left( \sigma^2 \right).$$

Figures \ref{fig:11} and \ref{fig:12} show the performance of Algorithm \ref{alg:main} for the Fourier and Chebyshev product bases, respectively, as SNR varies. Figure \ref{fig:11.1} shows the average runtime for each $D\in\{2,4,6,8\}$ as ${\rm SNR}_{\rm dB}$ changes.  When ${\rm SNR}_{\rm dB}$ is close to $0$ (which means that the $\ell_2$-norm of noise vector is the same as the $\ell_2$-norm of sample vector), the runtime gets larger due to Algorithm \ref{alg:main} using a larger number of overall iterations.  The runtime also increases mildly as $D$ increases in line with our previous observations.  The sampling number in Figure \ref{fig:11.2} is set to be three times larger than the sampling number used in the noiseless cases.  Figure \ref{fig:12} shows the results of Algorithm \ref{alg:main} applied to functions which are sparse in the Chebyshev product basis. Similar to the Fourier case, the runtime grows as the noise level gets worse in Figure \ref{fig:12.1}.  Also, larger $D$ results in the larger runtime as previously discussed.  In Figure \ref{fig:12.2}, the sampling number is also set by tripling the sampling number used in noiseless cases. 

As above, in both Figures \ref{fig:11} and \ref{fig:12} the average $\ell_2$-error is computed by only considering the successful trials where every element of $f$'s support, $\mathcal{S}$, is found.  Here, however, the percentage of successful trials falls below $90 \%$ for lower SNR values.  The success rates (i.e., the percentage of successful trials at each data point) are therefore plotted in Figures \ref{fig:11.4} and \ref{fig:12.4}.  Both figures show that a smaller ${\rm SNR}_{\rm dB}$ results in a smaller success rate, as one might expect.  As ${\rm SNR}_{\rm dB}$ increases, however, the $\ell_2$-error decreases linearly for the successful trials.

\subsection{Some Additional Implementational Details}

In the line $13$ of Algorithm \ref{alg:main} solving the least square problem can be accelerated by the iterative algorithms such as the Richardson method or the conjugate gradient method when the size of the matrix $\Phi_T$ is large \cite{needell2009cosamp}. For our range of relatively low sparsities, however, there was not much difference in the runtime between using such iterative least square solving algorithms and simply multiplying $\vect{y^{\rm E}}$ by the Moore-Penrose inverse, $\Phi_T^{\dagger}:=(\Phi_T^*\Phi_T)^{-1}\Phi_T^*$.  Thus, we simply form and use the Moore-Penrose inverse for both CoSaMP and Algorithm \ref{alg:main} in our implementations below.  

Similarly, in our CoSaMP implementation the conjugate transpose of the measurement matrix, $\Phi$, of size $m\times M^D$ is simply directly multiplied by the updated sample vector $\vect{v}$ in each iteration in order to obtain the signal proxy used for CoSaMP's support identification procedure (recall that $d = D$ in all experiments below so that $\mathcal{I} = [M]^D$).  It is important to note that this matrix-vector multiplication can generally be done more efficiently if, e.g., one instead uses nonuniform FFT techniques \cite{greengard2004accelerating} to evaluate $\Phi^* \vect{y}$ for the types of high-dimensional Fourier and Chebyshev basis functions considered below.  However, such techniques are again not actually faster than a naive direct matrix multiply for the ranges of relatively low sparsities we consider in the experiments herein.\footnote{CoSaMP always uses only $m = \mathcal{O}(s \cdot D \log M)$ samples in the experiments herein which means that its measurement matrix's conjugate transpose, $\Phi^* \in \mathbb{C}^{M^D \times m}$, can be naively multiplied by vectors in only $\mathcal{O}(s \cdot D \log M \cdot M^D)$-time.  When $s$ is small this is comparable to the $\mathcal{O}( D \log M \cdot M^D)$ runtime complexity of a (nonuniform) FFT.}  Furthermore, such nonuniform FFT techniques will still exhibit exponential runtime and memory dependence on $D$ in the high-dimensional setting even for larger sparsity levels.  Thus, nonuniform FFTs were not utilized in our MATLAB implementation of CoSaMP.  

Again, we remind the reader that all the MATLAB codes used to produce the plots above is publicly available \cite{MarksCodePage}.  We invite the interested reader to download it and reproduce the plots herein at their leisure.

\section{Future Work} \label{Future Work}
\setcounter{equation}{0}

In this paper we develop a sublinear-time compressive sensing algorithm for rapidly learning functions of many variables that admit sparse representations in arbitrary Bounded Orthonormal Product (BOP) bases.  Our results are universal in the sense that we give randomized constructions for highly structured grids which are proven to allow for the swift recovery {\it of all} functions which are sufficiently sparse in a given BOP basis, with high probability.  This is the first method of its kind for general BOP bases.  As a result, there is much work to do before these preliminary results reach their full potential.

First, and perhaps most obviously, the theoretical guarantees developed herein only apply to exactly BOPB-sparse functions despite the fact that our numerical experiments suggest that the algorithm also works for nearly BOPB-sparse functions.  As a result, it should be possible to extend the main theorem herein to obtain best $s$-term approximation guarantees in the sense of Cohen et al. \cite{cohen2009compressed} for more arbitrary functions while simultaneously improving its complexity bounds (see, e.g., \cite{choi2019sparse} for preliminary results in this direction).  Specific complexity improvements that should be considered include an attempt at reducing the current cubic-in-$s$ sampling complexity of our main theoretical result.  This necessitates that a better pairing method be developed in Section~\ref{sec:proposed} that requires fewer function evaluations.

Additionally, different dimension matching techniques for improving the pairing and entry identification steps of our proposed support identification method could be considered.  Both steps are currently dimension incremental in the sense that all entries of the energetic index vectors are found one dimension at a time, and then extended into longer prefixes one dimension at a time.  However, there is nothing stopping either of these steps from being generalized so that several short (potentially overlapping) prefixes of energetic index vectors are found in parallel and then merged/combined in a different order.  This process could even be made adaptive to help eliminate interference between different index vector prefixes during a modified pairing phase's energy estimations.  For example, any time a set of energetic prefixes differs from all the others under consideration in its $j^{\rm th}$ entry one could compute an inner product in the $j^{\rm th}$ dimension of $h$ in a fashion similar to our current entry identification step in order to better isolate those prefixes' energy estimates from the others.  Such methods could potentially lead to more accurate pairing results in noisy conditions.

Finally, there is also a good deal of improvement possible in the numerical implementation of the methods developed herein.  In particular, Algorithm~\ref{alg:main} was implemented in a generic fashion in our Section~\ref{Empirical Evaluation} experiments.  For better results the implementation should be tuned to the particular BOPB being considered.  It is also important to note that Algorithm~\ref{alg:main} is inherently embarrassingly parallel in nature.  In particular, all of the energetic index vector entry sets $\mathcal{N}_j$ can be computed in parallel.  Similarly, during the pairing step of support identification several prefixes can be grown simultaneously from, e.g., both the front and end of the index vector until the energetic prefixes and suffixes meet.  Upon meeting in the middle, one additional energy estimate could then be done to correctly pair the proper prefixes and suffixes together.

\section{Acknowledgements} The authors thank Holger Rauhut for fruitful discussions on the topic.
Both Mark Iwen and Felix Krahmer acknowledge support by the TUM August-Wilhelm-Scheer (AWS) Visiting Professor Program that allowed for the initiation of this project.
Mark Iwen was supported in part by NSF DMS-1416752 and NSF CCF-1615489.  Bosu Choi was supported in part by NSF DMS-1416752.  Felix Krahmer was supported in part by the German Science foundation in the context of the Emmy Noether junior research group KR 4512/1-1.

\bibliography{BosuEDs_MarkFelix}
  
\end{document}